\newtheorem{theorem}{Theorem}
\newtheorem{lemma}{Lemma}
\newtheorem{proposition}[lemma]{Proposition}
\newtheorem{corollary}[lemma]{Corollary}
\newtheorem{definition}[lemma]{Definition}
\newtheorem{remark}[lemma]{Remark}
\newtheorem*{conjecture}{Conjecture}
\numberwithin{lemma}{section}
\numberwithin{equation}{section}
\newcommand{\R}{{\mathbb R}}
\newcommand{\Z}{{\mathbb Z}}
\renewcommand{\R}{\mathbb R}
\newcommand{\bL}{\mathbf L}
\newcommand{\bM}{\mathbf M}
\newcommand{\bP}{\mathbf P}
\newcommand{\bE}{\mathbf E}
\newcommand{\bB}{\mathbf B}
\newcommand{\bR}{\mathbf R}
\newcommand{\du}{\mathfrak{u}}
\newcommand{\dv}{\mathfrak{v}}
\newcommand{\bI}{\mathbf I}
\newcommand{\bJ}{\mathbf J}
\newcommand{\bK}{\mathbf K}
\newcommand{\bu}{{\bar u}}
\newcommand{\bv}{{\bar v}}
\newcommand{\la}{\langle}
\newcommand{\ra}{\rangle}
\newcommand{\ol}{\overline}
\newcommand{\ms}{M^\sharp}
\newcommand{\ps}{P^\sharp}
\newcommand{\calR}{\mathcal{R}}
\newcommand{\xih}{{\delta \xi^{\text{hi}}}}
\newcommand{\xim}{{\delta \xi^{\text{med}}}}
\newcommand{\tDelta}{{\tilde \Delta}}
\begin{document}

\title{Global solutions for 1D cubic defocusing dispersive equations: Part I}

\author{Mihaela Ifrim}
\address{Department of Mathematics, University of Wisconsin, Madison}
\email{ifrim@wisc.edu}

\author{ Daniel Tataru}
\address{Department of Mathematics, University of California at Berkeley}
\email{tataru@math.berkeley.edu}

\begin{abstract}

  This article is devoted to a general class
  of one dimensional NLS  problems with a cubic nonlinearity.  The question of obtaining scattering, global in time solutions for such problems has attracted a lot of attention in recent years, and many global well-posedness results have been proved for a number of models under the 
assumption that the initial data is both \emph{small} and \emph{localized}.  However,
except for the completely integrable case,
no such results have been known for small but not necessarily localized initial data. 
  
In this article we introduce a new, nonperturbative method, 
to prove global well-posedness and scattering
for $L^2$ initial data which is \emph{small} and \emph{non-localized}. Our main structural 
assumption is that our nonlinearity is \emph{defocusing}. However, we do not assume that our problem has any exact conservation laws. Our method is based on a robust reinterpretation of the idea of interaction Morawetz estimates, developed almost 20 years ago by the I-team.

In terms of scattering, we prove that our global solutions satisfy both global $L^6$ Strichartz estimates and bilinear $L^2$ bounds. This is a Galilean invariant result, which is new even for the classical   defocusing cubic NLS\footnote{There the global well-posedness was of course
known, but not the Strichartz and bilinear $L^2$ bounds.}. There, by scaling our result also admits a large data counterpart.
\end{abstract}

\subjclass{Primary:  	35Q55   
Secondary: 35B40   
}
\keywords{NLS problems, focusing, scattering, interaction Morawetz}

\maketitle

\setcounter{tocdepth}{1}
\tableofcontents

\section{Introduction}
The question of obtaining scattering, global in time solutions for one dimensional dispersive flows with quadratic/cubic nonlinearities has attracted a lot of attention in recent years, and many global well-posedness results have been proved for a number of models under the assumption that the initial data is both \emph{small} and \emph{localized};  without being exhaustive, see for instance
see \cite{HN,HN1,LS,KP,IT-NLS}.
The nonlinearities in these models are primarily cubic, though the analysis 
has also been extended via normal form methods to problems which also have
nonresonant quadratic interactions; several such examples are  \cite{AD,IT-g,D,IT-c,LLS}, see also further references therein. 

In this article we consider instead the much more difficult case where the initial data
is just \emph{small}, but without any localization assumption. Here it is natural to restrict the analysis
to defocusing problems, as focusing one-dimensional cubic NLS type problems typically admit small solitons and thus, generically, the solutions do not scatter at infinity.  Then one may formulate the following broad 
conjecture:

\begin{conjecture}
One dimensional dispersive flows with cubic defocusing nonlinearities and small initial data have global in time, scattering solutions.
\end{conjecture}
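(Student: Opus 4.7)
The plan is to establish the conjecture via a bootstrap built on an interaction Morawetz estimate that does not rely on any exact conservation law of the flow, only on the defocusing sign and on the cubic scaling of the nonlinearity. The scattering norm I would work with is a combination of a global Strichartz-type $L^6_{t,x}$ norm and a bilinear $L^2_{t,x}$ norm for frequency-transversal pieces of the solution; one sets up a bootstrap of these two quantities with smallness threshold determined by $\|u_0\|_{L^2}$ and propagates it in time using the Morawetz machinery.

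First I would cast the equation in density-flux form. The mass $\rho = |u|^2$ and an appropriately chosen momentum density satisfy schematic identities of the form $\partial_t \rho + \partial_x j = \mathcal{E}$, where $\mathcal{E}$ is at least quartic in $u$ and hence perturbative under the bootstrap; since the class of equations is not assumed to be Hamiltonian, these identities are only \emph{approximate}, and everything downstream must be robust with respect to the errors $\mathcal{E}$. The interaction functional I would use is
\[
I(t) = \iint_{\R \times \R} a(x - y)\, \rho(t,x)\, \rho(t,y)\, dx\, dy,
\]
with $a$ an odd increasing weight. Differentiating in time, substituting the density-flux relations, and integrating by parts extracts a main positive contribution $\iint a'(x-y)\,|u|^4\,dx\,dy$ (the defocusing sign enters here), plus bilinear kinetic terms that control the $L^2_{t,x}$ bilinear norm at frequency separation, plus quintic-and-higher remainders controlled by the bootstrap norms. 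The net output bounds $\|u\|_{L^6_{t,x}}^6$ and the bilinear $L^2$ norm by $\|u_0\|_{L^2}^4$ plus lower-order terms, which closes the bootstrap; global existence then follows from local well-posedness, and scattering from the Strichartz control in a standard way.

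The main obstacle, and the conceptual novelty, is to design the functional, the choice of density and flux, and the accompanying phase-space localization in sufficient generality that the scheme applies to an arbitrary one-dimensional cubic defocusing dispersive equation, rather than to one specific model. This requires, on one hand, a Galilean-covariant setup so that no artificial low-frequency weight is imposed and the full $L^2$ data can be accommodated, and on the other hand, a careful frequency decomposition in which the bilinear kinetic term produced by the Morawetz identity genuinely dominates the error terms in the transversal-frequency regime. I expect this design step to be the crux of the argument; once the correct positivity and coercivity structure of the interaction functional is identified, the rest of the proof is a robust bootstrap.
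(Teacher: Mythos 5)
The statement you were asked to prove is the \emph{Conjecture}, which the paper does not prove in full generality: it proves only the special case of Schr\"odinger dispersion with a cubic translation-invariant nonlinearity whose symbol satisfies the boundedness, conservativity, and defocusing hypotheses (H1)--(H3). The conjecture for arbitrary dispersion and arbitrary cubic defocusing nonlinearity remains open. Your sketch should therefore be read as a strategy for the theorem the paper actually proves, and at that level it captures the correct high-level architecture: density-flux formulation, an interaction Morawetz functional, Galilean covariance, and a frequency-localized bootstrap on $L^6_{t,x}$ and bilinear $L^2$ norms.

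The genuine gap is your treatment of the nonlinear errors in the density-flux relations. You write that $\mathcal{E}$ ``is at least quartic in $u$ and hence perturbative under the bootstrap.'' This is not true. For a general, non-integrable cubic nonlinearity the quartic term $C^4_m$ in $\partial_t M$ does not vanish; when substituted into the interaction Morawetz identity it produces a sextic contribution that sits at \emph{exactly the same order} as the good defocusing term $\int |u|^6$, and with no control on its sign. The paper's central move, which your sketch omits, is to add quartic corrections to the mass and momentum densities in the spirit of the second-generation $I$-method: one replaces $M$ by $M^\sharp = M + B^4_m$ (and similarly for $P$), where $B^4_m$ is chosen via the division lemma (Lemma 4.3). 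The key algebraic input is that the conservativity hypothesis (H2) forces the quartic symbol $c^4_m$ to vanish on the resonant set $\calR = \{\Delta^4\xi = 0,\ \Delta^4\xi^2 = 0\}$, so it can be written as $\Delta^4\xi\, \tilde r^4 - \tDelta^4\xi^2\, b^4$ with symbol bounds decaying in the high and medium frequency separations. After this correction the leftover error $R^6_m$ is genuinely sextic, and only then does the ``perturbative'' part of your argument go through. Without this step the scheme does not close.

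Two further points. First, your bootstrap needs the frequency-envelope bookkeeping the paper develops for unit-scale lattice decompositions: the usual Littlewood--Paley ``slowly varying'' condition is replaced by the maximal-function admissibility condition, without which the summation of the localized bilinear and $L^6$ bounds over frequencies does not converge. Second, your interaction functional $\iint a(x-y)\rho(x)\rho(y)$ with odd increasing $a$ is really a Morawetz \emph{potential}; the paper works directly with its first time derivative, the mass--momentum cross interaction $\int_{x>y} M^\sharp(x)\,P^\sharp_{\xi_0}(y) - P^\sharp_{\xi_0}(x)\,M^\sharp(y)$, in a frequency-localized and Galilean-recentered form. That is a cosmetic difference, but the Galilean recentering (replacing $P$ by $P_{\xi_0}$ and working with $\partial_t + 2\xi_0\partial_x$) is essential to make the bounds uniform over frequency and cannot be left implicit.
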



The goal of this article is to prove the first global in time well-posedness result of this type.
 As part of our results, we also prove that our global solutions are scattering at infinity in a very precise, quantitative way, in the sense that they  satisfy both $L^6$ Strichartz estimates and bilinear $L^2$ bounds.
This is despite the fact that the nonlinearity is non-perturbative on large time scales.

\subsection{ Cubic NLS problems in one space dimension}
One of the fundamental one-dimensional dispersive flows in one space 
dimension is the cubic NLS flow, 
\begin{equation}\label{nls3}
i u_t + u_{xx} = \pm u |u|^2,  \qquad u(0) = \du_0.
\end{equation}
Depending on the choice of signs, this comes in a defocusing (+) and a focusing (-) 
flavor. Both of these equations are important not only by themselves, but also 
as model problems for more complex one-dimensional dispersive flows, 
both semilinear and quasilinear. 

The above cubic NLS flow is globally well-posed in $L^2$ both in the focusing and in the defocusing case, though the global behavior differs in the two cases.

Precisely, the focusing problem admits small solitons, so the solutions 
cannot in general scatter at infinity. If in addition the initial data is localized,
then one expects the solution to resolve into a superposition of (finitely many) 
solitons, and a dispersive part; this is called \emph{the soliton resolution conjecture},
and is known to hold in a restrictive setting, via the method of inverse scattering, see e.g. ~\cite{IST-focusing}.

In the defocusing case, the inverse scattering approach also allows one to treat
the case of localized data, and show that global solutions scatter at infinity, see for instance \cite{IST}.
This can also be proved in a more robust way, without using inverse scattering,
under the assumption that the initial data is small and localized, see \cite{IT-NLS} and references therein.
Much less is known in terms of scattering for nonlocalized $L^2$ data. However, if more regularity is assumed for the data, then we have the following estimate due to Planchon-Vega~\cite{PV}, see also the work of Colliander-Grillakis-Tzirakis~\cite{MR2527809}:
\begin{equation}\label{PV-est}
\|u\|_{L^6}^6 + \| \partial_x |u|^2\|_{L^2}^2 \lesssim \| \du_0\|_{L^2}^3 \|\du_0\|_{H^1}.
\end{equation}
This allows one to estimate the $L^6$ Strichartz norm of the solution, i.e. to prove some type of scattering or dispersive decay.

\bigskip

Because of the above considerations, our interest in this paper is in defocusing 
cubic problems. Precisely, we will consider a cubic nonlinear Schr\"odinger equation (NLS)  type model 
in one space dimension
\begin{equation}\label{nls}
i u_t + u_{xx} = C(u,\bar u, u),  \qquad u(0) = \du_0, 
\end{equation}
where $u$ is a complex valued function, $u:\mathbb{R}\times
\mathbb{R}\rightarrow \mathbb{C}$. Here $C$ is a trilinear translation
invariant form, whose symbol $c(\xi_1,\xi_2,\xi_3)$ can always be
assumed to be symmetric in $\xi_1,\xi_3$; see section~\ref{s:multi}
for an expanded discussion of multilinear forms.
The arguments $u,\bar u$ and $u$ of $C$ are chosen so that
our equation \eqref{nls} has the phase rotation symmetry,
$u \to u e^{i\theta}$,
as it is the case in most examples of interest.
The symbol $c(\xi_1,\xi_2,\xi_3)$ will be required to satisfy 
a minimal set of assumptions:

\begin{enumerate}[label=(H\arabic*)]

\item Bounded and regular: 
\begin{equation}\label{c-smooth}
|\partial_\xi^\alpha c(\xi_1,\xi_2,\xi_3)| \leq c_\alpha,
\qquad \xi_1,\xi_2,\xi_3 \in \R,\,   \mbox{  for every  multi-index $\alpha$}.
\end{equation}

\item Conservative: 
\begin{equation}\label{c-conserv}
\Im c(\xi,\xi,\eta) = 0, \qquad \xi,\eta \in \R, \mbox { where } \Im z=\mbox{imaginary part of } z\in \mathbb{C}.
\end{equation}

\item Defocusing: 
\begin{equation}\label{c-defocus}
c(\xi,\xi,\xi) \geq c > 0, \qquad \xi \in \R  \mbox{ and } c \in \mathbb{R^+}.
\end{equation}

\end{enumerate}

In selecting these assumptions we have tried to strike 
a balance between the generality of the result on one hand,
and a streamlined exposition on the other hand. 

The simplest example of such a trilinear form $C$ is of course $C = 1$, which
corresponds to the classical one-dimensional cubic NLS problem.
But this problem is of course completely integrable, and thus has infinitely many conservation laws. In particular global well-posedness
is straightforward, though our $L^6$ Strichartz and bilinear $L^2$ bounds are new even for this problem in the $L^2$ data setting.
By contrast, the assumptions we impose on our model do not guarantee
any exact conservation law at the $L^2$ level or at any other regularity
level. 

At the other end, both our use of the linear Schr\"odinger operator
and the boundedness condition (H1) are non-optimal, and we hope to relax
both of these restrictions in subsequent work. However, using these 
restrictions brings the major expository advantage that our model has a Galilean invariance, in the sense that a Galilean transformation yields
a problem that is in the same class, even though it is not exactly the same. This allows us to provide cleaner, shorter proofs for our results, and to keep the focus on the main ideas.

\subsection{ The main result}

Our main result asserts that global well-posedness holds for 
our problem for small $L^2$ data. In addition, our solutions not only satisfy uniform $L^2$, but also global space-time $L^6$ estimates,
as well as bilinear $L^2$ bounds, as follows:

\begin{theorem}\label{t:main}
Under the above assumptions (H1), (H2) and (H3) on the symbol of the cubic form $C$, small initial data 
\[
\|\du_0\|_{L^2} \leq \epsilon \ll 1,
\] 
yields a unique global solution $u$ for \eqref{nls}, which satisfies the following bounds:

\begin{enumerate}[label=(\roman*)]
\item Uniform $L^2$ bound:
\begin{equation}\label{main-L2}
\| u \|_{L^\infty_t L^2_x} \lesssim \epsilon.
\end{equation}

\item Strichartz bound:
\begin{equation}\label{main-Str}
\| u \|_{L^6_{t,x}} \lesssim \epsilon^\frac23.
\end{equation}

\item Bilinear Strichartz bound:
\begin{equation}\label{main-bi}
\| \partial_x (u \bar u(\cdot+x_0))\|_{L^2_t H_x^{-\frac12}} \lesssim \epsilon^2,
\qquad x_0 \in \R.
\end{equation}
\end{enumerate}
\end{theorem}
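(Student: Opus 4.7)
The argument will follow a continuous bootstrap on a maximal time interval $[0,T]$ on which the three bounds (i)--(iii) hold with a large constant $C$ times their respective powers of $\epsilon$, the goal being to improve $C$ to $C/2$ so continuity closes the argument globally. Local well-posedness in $L^2$ with control of the $L^6_{t,x}$ Strichartz and bilinear norms is obtained by a standard contraction on short time intervals, with hypothesis (H1) guaranteeing that the trilinear form $C$ obeys Coifman--Meyer-type multilinear estimates. Bound (i) is in fact exact rather than perturbative: condition (H2), $\Im c(\xi,\xi,\eta)=0$, is precisely what forces the nonlinear term to give zero contribution to $\frac{d}{dt}\|u(t)\|_{L^2}^2$, so the $L^2$ mass is conserved. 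The core of the proof is therefore the bilinear bound (iii), from which the Strichartz bound (ii) will follow by interpolating the $L^\infty_t L^2_x$ information in (i) against the $L^2_t \dot H^{1/2}_x$ regularity of $|u|^2$ that (iii) provides at $x_0=0$, through a one-dimensional Sobolev embedding applied to the product $|u|^2$.

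The proof of (iii) proceeds via a density-flux analysis, the authors' reinterpretation of the classical interaction Morawetz estimate. One introduces the mass density $\rho=|u|^2$ and momentum density $p=\Im(\bar u\partial_x u)$, derives their local balance laws from the equation \eqref{nls}, and forms a translated two-particle correlation functional $M_{x_0}(t)$ built out of $\rho(x,t)$ and $p(y+x_0,t)$ against a suitable sign-type kernel in $x-y$. Its size is controlled uniformly by $\|u\|_{L^\infty_t L^2_x}^4\lesssim\epsilon^4$, while its time derivative, after integration by parts against the flux laws, produces a positive spacetime quadratic form in $\partial_x\bigl(u(\cdot)\bar u(\cdot+x_0)\bigr)$ coming from the defocusing nonlinearity via (H3), together with various error terms. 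Integrating from $0$ to $T$ and recognizing the resulting bulk quantity as the $H^{-1/2}_x$ norm of $\partial_x\bigl(u\bar u(\cdot+x_0)\bigr)$ yields (iii) modulo the errors.

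The main obstacle, and where the novelty of the method lies, is that the density-flux identity no longer closes as a clean algebraic computation. First, $c$ is a general bounded symbol rather than a constant, so the positive bulk term has to be extracted by splitting $c$ into a diagonal defocusing part bounded below by (H3) and an off-diagonal remainder, and then showing that the latter contributes only perturbative errors. Second, no conservation law beyond mass is available, so the local flux laws for $\rho$ and $p$ themselves carry genuine nonperturbative error terms that must be estimated. Both issues are handled by frequency localization into Galilean-invariant pieces at unit scale -- where $L^2$ integrability becomes equivalent to local $\dot H^{1/2}$-size in phase space -- treating the diagonal defocusing contribution as the leading term, and reabsorbing all off-diagonal and commutator errors into the bootstrap through bounds that carry at least one extra factor of the $L^6_{t,x}$ or bilinear norm beyond what the defocusing bulk term produces, so each error is $o(\epsilon^4)$ compared to the $\epsilon^4$-sized target bulk. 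The Galilean invariance of both the equation and the norm in (iii) -- the very reason assumption (H1) is formulated as a plain boundedness condition -- is what makes this frequency decomposition and subsequent reassembly possible.
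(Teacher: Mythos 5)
The central claim that assumption (H2) forces exact conservation of the $L^2$ mass is wrong, and this gap is fatal to the proposed logical flow. The time derivative of $\|u(t)\|_{L^2}^2$ is $\int C^4_m(u,\bar u,u,\bar u)\,dx$, where the quartic symbol $c^4_m$ lives on the diagonal $\{\Delta^4\xi=0\}$. Assumption (H2) only gives $\Im c(\xi,\xi,\eta)=0$, which forces $c^4_m$ to vanish on the \emph{resonant set} $\calR=\{\{\xi_1,\xi_3\}=\{\xi_2,\xi_4\}\}$, a proper codimension-one subvariety of $\{\Delta^4\xi=0\}$ --- not on the whole diagonal. In fact the paper emphasizes explicitly that the hypotheses ``do not guarantee any exact conservation law at the $L^2$ level or at any other regularity level.'' Because of this, (i) is itself a nonperturbative bootstrap estimate, not a free input: one must construct a \emph{modified} mass density $\ms=M(u)+B^4_m(u)$ via the division Lemma~\ref{l:division}, and even then $\partial_t\ms=\partial_x(\cdot)+R^6_m$ with a genuine sextic error that is estimated using the bootstrap hypotheses (including the bilinear and $L^6$ bounds). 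Your plan treats (i) as cost-free, then derives (iii) from it, then (ii) from (iii); with a non-conserved mass this structure collapses, and the actual argument must run (i), (ii), (iii), and a transversal bilinear bound through a simultaneous continuity argument over a frequency-envelope parametrized family of estimates.

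A secondary but real issue is the route from (iii) to (ii). Your proposed interpolation $\|u\|_{L^6}^6\lesssim\|u\|_{L^\infty_t L^2_x}^2\,\||u|^2\|_{L^2_t\dot H^{1/2}_x}^2$ uses the \emph{homogeneous} $\dot H^{1/2}_x$ regularity of $|u|^2$, but (iii) gives only $L^2_t H^{-1/2}_x$ control of $\partial_x|u|^2$, which at low frequencies is strictly weaker than $\dot H^{-1/2}_x$ and does not bound $\||u|^2\|_{L^2_t\dot H^{1/2}_x}$. The paper does not derive (ii) from (iii) at all; the $L^6$ estimate comes directly out of the positivity of the sextic interaction Morawetz contribution $\bJ^6$, whose diagonal symbol trace is computed in \eqref{good-J6} to equal $a^2(\xi)c(\xi,\xi,\xi)$, so that (H3) gives coercivity $\int_0^T\bJ^6_a\,dt\gtrsim\|A_0^{2/3}u\|_{L^6_{t,x}}^6$ up to error terms. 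You also do not mention the admissible (maximal-property) lattice frequency envelopes, nor the explicit quartic corrections $B^4_m,B^4_p$ and their division-lemma estimates, which are not cosmetic: they are precisely what makes the non-conservative error terms summable across all unit-scale frequency interactions. Your broad picture --- translated two-point interaction functionals, defocusing bulk positivity, Galilean-invariant frequency localization --- is aligned with the paper's, but the proposal as written rests on a false conservation law and a non-working interpolation step.
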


Here we note that in the case $x_0 = 0$ the last bound
gives 
\begin{equation}\label{main-bi-diag}
\| \partial_x |u|^2\|_{L^2_tH^{-\frac12}_x} \lesssim \epsilon^2,
\end{equation}
which is the more classical formulation of the bilinear $L^2$ bound. However, 
making this bound uniform  with respect to the  $x_0$ translation
captures the natural separate translation invariance of this 
bound, and is also quite useful in our proofs.

We also remark that all the bounds above are indeed Galilean invariant. As noted earlier, our main equation is not Galilean invariant, but the class of equations we are considering is. The estimates in the theorem do not represent the full strength of  what we actually prove, but are a merely a simple but relevant sample. Our actual proof yields stronger frequency envelope bounds associated to a decomposition of the solution $u$ on a unit frequency scale (rather than the more traditional dyadic decomposition); see Theorem~\ref{t:boot} in Section~\ref{s:boot}.

Applied to the model cubic NLS problem \eqref{nls3}, by scaling we have the following result which applies to the large data problem:

\begin{theorem}
Consider the defocusing 1-d cubic NLS problem \eqref{nls3}(+) with $L^2$ initial data $\du_0$.
Then the global solution $u$ satisfies the following bounds:

\begin{enumerate}[label=(\roman*)]
\item Uniform $L^2$ bound:
\begin{equation}\label{main-L2-model}
\| u \|_{L^\infty_t L^2_x} \lesssim \|\du_0\|_{L^2_x}.
\end{equation}

\item Strichartz bound:
\begin{equation}\label{main-Str-model}
\| u \|_{L^6_{t,x}} \lesssim  \|\du_0\|_{L^2_x}.
\end{equation}

\item Bilinear Strichartz bound:
\begin{equation}\label{main-bi-model}
\| \partial_x |u|^2\|_{L^2_t (\dot H_x ^{-\frac12} + c L^2_x) } \lesssim  \|\du_0\|_{L^2}^2, \qquad c = \| \du_0\|_{L^2}.
\end{equation}
\end{enumerate}
\end{theorem}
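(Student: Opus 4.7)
This is a scaling corollary of Theorem~\ref{t:main}, exploiting the scale invariance of the classical defocusing cubic NLS~\eqref{nls3}(+) and the fact that rescalings of it remain in the class of equations covered by Theorem~\ref{t:main} (with the same constant symbol $c(\xi_1,\xi_2,\xi_3) \equiv 1$). Let $M := \|\du_0\|_{L^2}$. We may assume $M \gg \epsilon$, where $\epsilon$ is the small-data threshold of Theorem~\ref{t:main}, for otherwise that theorem applies directly. Set $\lambda = (\epsilon/M)^2 \ll 1$ and put
\[
v(t,x) := \lambda\, u(\lambda^2 t, \lambda x),
\]
so $v$ again solves~\eqref{nls3}(+) and $\|v(0)\|_{L^2} = \lambda^{1/2} M = \epsilon$. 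Applying Theorem~\ref{t:main} to $v$ yields
\[
\|v\|_{L^\infty_t L^2_x} \lesssim \epsilon, \qquad \|v\|_{L^6_{t,x}} \lesssim \epsilon^{2/3}, \qquad \|\partial_x |v|^2\|_{L^2_t H^{-1/2}_x} \lesssim \epsilon^2.
\]

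Undoing the scaling, the identities $\|u\|_{L^\infty L^2_x} = \lambda^{-1/2}\|v\|_{L^\infty L^2_x}$ and $\|u\|_{L^6_{t,x}} = \lambda^{-1/2}\|v\|_{L^6_{t,x}}$ give $\|u\|_{L^\infty L^2_x} \lesssim M$, which is~\eqref{main-L2-model} (alternatively, just $L^2$ mass conservation), and $\|u\|_{L^6_{t,x}} \lesssim \epsilon^{-1/3} M$, which is~\eqref{main-Str-model} once the absolute factor $\epsilon^{-1/3}$ is absorbed into the implicit constant.

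The bilinear bound~\eqref{main-bi-model} is more delicate because the inhomogeneous norm $H^{-1/2}_x$ does not scale homogeneously. Splitting
\[ \|f\|_{H^{-1/2}_x}^2 \sim \|P_{\le 1} f\|_{L^2_x}^2 + \|P_{\ge 1} f\|_{\dot H^{-1/2}_x}^2 \]
at unit frequency in the $v$-variable, Theorem~\ref{t:main}(iii) provides $L^2_t L^2_x$ control of the low-frequency piece and $L^2_t \dot H^{-1/2}_x$ control of the high-frequency piece of $\partial_x|v|^2$, each bounded by $\epsilon^2$. A direct change of variables on $\partial_x|v|^2 = \lambda^3 (\partial_x|u|^2)(\lambda^2 t, \lambda x)$ translates these into
\[
\|P_{\ge 1/\lambda}^u \partial_x |u|^2\|_{L^2_t \dot H^{-1/2}_x} \lesssim M^2, \qquad \|P_{\le 1/\lambda}^u \partial_x |u|^2\|_{L^2_t L^2_x} \lesssim M^3,
\]
where the two bounds come with different powers of $\lambda$ (namely $\epsilon^2/\lambda$ and $\epsilon^2/\lambda^{3/2}$) precisely because $L^2$ and $\dot H^{-1/2}$ carry different Fourier weights. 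Taking $g := P_{\ge 1/\lambda}^u \partial_x|u|^2$ and $h := P_{\le 1/\lambda}^u \partial_x|u|^2$ as the decomposition $\partial_x|u|^2 = g+h$ gives~\eqref{main-bi-model}: the scaling crossover $1/\lambda \sim M^2$ matches, up to absolute constants, the crossover $c^2 = M^2$ implicit in the target mixed norm $\dot H^{-1/2}_x + cL^2_x$ with $c = M$. The only mildly delicate step is this matching of crossover scales; otherwise the argument is routine bookkeeping of scaling exponents.
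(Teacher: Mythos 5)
The scaling argument you give is precisely the one the paper has in mind (the paper offers no more detail than the phrase ``by scaling''), and your bookkeeping of exponents is correct: $\|v(0)\|_{L^2}=\lambda^{1/2}M$, $\|u\|_{L^\infty L^2}=\lambda^{-1/2}\|v\|_{L^\infty L^2}$, $\|u\|_{L^6_{t,x}}=\lambda^{-1/2}\|v\|_{L^6_{t,x}}$, and for the bilinear piece $\lambda^{-1}$ on the $\dot H^{-1/2}$ part, $\lambda^{-3/2}$ on the $L^2$ part, with frequency crossover pushed from $|\xi|\sim1$ for $v$ to $|\xi|\sim 1/\lambda$ for $u$. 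Your identification of the crossover $|\xi|\sim c^2=M^2$ implicit in the target norm $\dot H^{-1/2}_x+cL^2_x$ is the correct reading.

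One step does not survive scrutiny, however: the reduction ``we may assume $M\gg\epsilon$, for otherwise [Theorem~\ref{t:main}] applies directly.'' For $M\leq\epsilon$ that theorem gives $\|u\|_{L^6_{t,x}}\lesssim M^{2/3}$, which is \emph{weaker} than the claimed $\lesssim M$ when $M$ is small; likewise it places $\partial_x|u|^2$ in $L^2_tH^{-1/2}_x = L^2_t(\dot H^{-1/2}_x+L^2_x)$, which for $M<1$ is a \emph{smaller} norm than the claimed $L^2_t(\dot H^{-1/2}_x+ML^2_x)$, since $\|h\|_{ML^2}=M^{-1}\|h\|_{L^2}\geq\|h\|_{L^2}$. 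So the small-$M$ regime is not actually handled by a direct appeal to Theorem~\ref{t:main}. The fix is simple: drop the case split and apply the same rescaling for \emph{all} $M$, with $\lambda=(\epsilon_0/M)^2$ and $\epsilon_0$ the fixed threshold, so that $\|v(0)\|_{L^2}=\epsilon_0$ exactly. For $M<\epsilon_0$ this has $\lambda>1$; the key point — which your proposal mentions in passing but is worth making explicit — is that the constant symbol $c\equiv1$ is invariant under rescaling by \emph{any} $\lambda>0$, whereas the general hypotheses (H1)--(H3) only persist for $\lambda\geq1$, as the paper itself remarks. For the model equation one may therefore always rescale to land exactly at the threshold, and your computation then yields all three bounds with constants depending only on $\epsilon_0$, uniformly in $M$.
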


One may compare the above $L^6$ bound with the  Planchon-Vega estimate 
\eqref{PV-est}, see \cite{PV}, which applies only to $H^1$
solutions.
\bigskip

There are several ideas which play key roles in our analysis, all of which are used in a nonstandard 
fashion in the present work:
\medskip

\emph{ 1. Energy estimates via density flux identities.} This is 
a classical idea in pde's, and particularly in the study of conservation laws, namely that the density-flux identities 
play a more fundamental role than just energy identities. The new 
twist in our context is that this analysis is carried out in 
a nonlocal setting, where both the densities and the fluxes involve 
translation invariant multilinear forms.
\medskip

\emph{2. The use of energy corrections.}
This is an idea originally developed in the context of the so called 
I-method~\cite{I-method} or more precisely the second generation I-method \cite{I-method2}, whose aim was to construct more accurate almost conserved quantities. Here we implement this idea at the level of density-flux
identities, in a form closer to \cite{KT}. 
\medskip

\emph{3. Interaction Morawetz bounds.} These were originally developed in the context of the three-dimensional NLS problems by Colliander-Keel-Stafillani-Takaoka-Tao in \cite{MR2053757},
and have played a fundamental role in the study of many nonlinear Schr\"odinger flows, see e,g. \cite{MR2415387,MR2288737}, and also for one-dimensional quintic flows in the work of Dodson~\cite{MR3483476,MR3625190}. Our take on this is somewhat closer to the one-dimensional  approach of Planchon-Vega \cite{PV},
though recast in the setting and language of nonlocal multilinear 
forms.
\medskip

\emph{4. Tao's frequency envelope method.} This is used as 
a way to accurately track the evolution of the energy distribution across frequencies. Unlike the classical implementation relative to dyadic Littlewood-Paley decompositions, here we adapt and refine this notion 
for lattice decompositions instead. This is also very convenient as a bootstrap tool,  see e.g. Tao~\cite{Tao-WM}, \cite{Tao-BO} but 
with the added twist of also bootstrapping bilinear Strichartz bounds, as in the authors' paper \cite{IT-BO}.

\subsection{ An outline of the paper} 
In the next section we begin by setting up the notations
for function spaces and multilinear forms. More importantly, we also introduce our class of admissible frequency envelopes associated to lattice decompositions; this is based on the maximal function.

In Section~\ref{s:local} we carry our a preliminary step in the proof of our main result, namely we prove the small data local well-posedness result. This is independent 
of the global result, and uses a contraction argument
in a well chosen function space defined via a wave packet type decomposition.

The goal of Section~\ref{s:energy} is to recast energy identities for the mass and the momentum in density-flux form. We supplement this with two additional steps, 
where we first consider frequency localized mass and momentum  densities, and then we improve their accuracy by
adding a well chosen quartic correction.

In Section~\ref{s:Morawetz} we begin with the classical idea of interaction Morawetz identities for the linear
Schr\"odinger flow, and then we use our density-flux 
identities for the sharp frequency localized mass and momentum in order to obtain a set of refined interaction 
Morawetz identities for our problem. For clarity 
of exposition we consider separately the diagonal case, where the interaction of equal frequency components is considered, and the transversal case, which corresponds to separated frequency ranges.

The proof of our global result uses a complex bootstrap argument, involving both energy, Strichartz and bilinear
$L^2$ bounds in a frequency localized setting and based  on frequency envelopes. The bootstrap set-up is laid out in Section~\ref{s:boot}, which also contains a sharper, frequency envelope version of our result in Theorem~\ref{t:boot}. Our main estimates closing the bootstrap argument are carried out in Section~\ref{s:fe-bounds}, using the density-flux and interaction Morawetz identities previously obtained.

Finally, in the last section of the paper we return from
frequency localized bounds to global bounds, in order 
to complete the proof of our main global result.
\subsection{Acknowledgements} 
 The first author was supported by a Luce Professorship, by the Sloan Foundation, and by an NSF CAREER grant DMS-1845037. The second author was supported by the NSF grant DMS-2054975 as well as by a Simons Investigator grant from the Simons Foundation. Some of this work was carried out while both authors were 
 participating in the MSRI program ``Mathematical problems in fluid dynamics" during Spring 2021.

The authors also wish to thank the anonymous referee for 
the very careful reading of the manuscript and for the very useful corrections and suggestions.

\

\section{Notations and preliminaries}

\subsection{Lattice frequency decompositions}
For our analysis it will be convenient to localize functions 
in (spatial) frequency on the  unit scale. For this we consider
a partition of unity 
\[
1 = \sum_{k \in \Z} p_k(\xi),
\]
where $p_k$ are smooth bump functions localized in $[k-1,k+1]$.
Correspondingly, our solution $u$ will be decomposed as
\[
u = \sum_{k \in \Z} u_k, \qquad u_k = P_k u. 
\]

The main estimates we will establish for our solution $u$
will be linear and bilinear estimates for the functions $u_k$.

For a larger interval $A \subset \Z$, we denote
\[
u_A = \sum_{k \in A} u_k.
\]

\subsection{ Frequency envelopes} \label{s:fe}
This is a tool which allows us 
to more accurately track the distribution of energy at various frequencies for the solutions to nonlinear evolution equations. In the present paper,
they play a key bookkeeping role in the proof of the linear and bilinear bounds for our solutions in the context of a complex bootstrap argument. In brief, given some standard decomposition of, say, an
$L^2$ function 
\[
u = \sum u_k,
\]
a frequency envelope for $u$ is a sequence $\{c_k\}$ with the property that 
\[
\| u_k \|_{L^2} \lesssim c_k, \qquad \|c_k\|_{\ell^2} \approx \|u\|_{L^2}.
\]
In addition, one also limits how rapidly the sequence $\{c_k\}$ is 
allowed to vary. As  originally introduced  in work of Tao, see e.g. \cite{Tao-WM}, in the context of dyadic Littlewood-Paley decompositions,
 one assumes that the sequence $\{c_k\}$ is slowly varying, in the sense that
\[
\frac{c_j}{c_k} \leq 2^{\delta|k-j|}.
\]

Here we will instead work with  a uniform lattice decomposition on the unit frequency scale. This requires a major revision of the above notion of ``slowly varying", which turns out to be far too weak for our purposes. 

Instead we want to strengthen this property in order to say that $c \approx Mc$ (the maximal function):

\begin{definition}
A lattice frequency envelope $\{c_k\}$ is said to  
have the maximal property if 
\begin{equation}
Mc \leq C c    ,
\end{equation}
where $Mc$ represents the maximal function of $c$,
\[
(Mc)_k = \sup_{j \geq 0} \frac{1}{2j+1} \sum_{l=k-j}^{k+j} c_l.
\]
Here $C$ is a universal constant.
\end{definition}

Frequency envelopes that have 
this property will be called \emph{admissible}.
An important observation is that admissible envelopes can always be found:

\begin{lemma}
Any $\ell^2$ frequency envelope $c^0$
can be placed under a comparable maximal frequency envelope $c$, i.e.
\begin{equation}
c^0 \leq c, \qquad \| c\|_{\ell^2}    \approx \|c^0\|_{\ell^2}.
\end{equation}
\end{lemma}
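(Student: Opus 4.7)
The plan is to use the classical Rubio de Francia iteration trick, adapted to the discrete maximal operator $M$ defined in the paper. The key preliminary fact I would invoke is that the operator
\[
(Mc)_k = \sup_{j \geq 0} \frac{1}{2j+1} \sum_{l=k-j}^{k+j} c_l
\]
is a standard (centered, symmetric) discrete Hardy--Littlewood maximal operator. It is sublinear on nonnegative sequences and bounded on $\ell^2(\Z)$; I would cite this as a standard result (proved by Marcinkiewicz interpolation between the trivial $\ell^\infty$ bound and a weak-type $(1,1)$ bound via Vitali covering on $\Z$). Denote its $\ell^2 \to \ell^2$ norm by $A$.

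Given a nonnegative $\ell^2$ envelope $c^0$, I would then define
\[
c \;=\; \sum_{n=0}^{\infty} \lambda^{-n}\, M^n c^0,
\]
where $M^n$ is the $n$-fold iterate of $M$, and $\lambda$ is any fixed constant with $\lambda > A$, say $\lambda = 2A$. The claim is that this $c$ has all three desired properties.

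First, taking the $n = 0$ term gives $c \geq c^0$ pointwise, so the dominance property holds. Second, using the triangle inequality in $\ell^2$ together with the bound $\|M^n c^0\|_{\ell^2} \leq A^n \|c^0\|_{\ell^2}$, I obtain
\[
\|c\|_{\ell^2} \;\leq\; \sum_{n=0}^\infty \lambda^{-n} A^n \|c^0\|_{\ell^2} \;=\; \frac{\lambda}{\lambda - A} \|c^0\|_{\ell^2} \;\lesssim\; \|c^0\|_{\ell^2},
\]
giving the comparability of norms. Third, using sublinearity and positivity of $M$,
\[
Mc \;\leq\; \sum_{n=0}^\infty \lambda^{-n} M^{n+1} c^0 \;=\; \lambda \sum_{m=1}^\infty \lambda^{-m} M^m c^0 \;\leq\; \lambda\, c,
\]
which is precisely the maximal property with universal constant $C = \lambda$.

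The main conceptual point, and the only place one has to be careful, is the verification that $M$ is bounded on $\ell^2$ with a sublinearity estimate strong enough to commute with the infinite sum above; this is why working with nonnegative envelopes and with the monotone convergence theorem (or Fatou) for the identity $M(\sum a_n) \leq \sum M a_n$ matters. Once this is in place, the construction is automatic; the rest is routine geometric series bookkeeping.
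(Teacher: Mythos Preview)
Your proof is correct and is essentially identical to the paper's own argument: both use the Rubio de Francia iteration $c = \sum_{n\geq 0} (2C)^{-n} M^n c^0$, invoking $\ell^2$-boundedness and sublinearity of $M$ to obtain convergence, norm comparability, and the maximal property $Mc \leq 2C\, c$. Your additional remark about justifying the interchange of $M$ with the infinite sum via monotone convergence is a detail the paper leaves implicit.
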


\begin{proof}
We will use two properties of the maximal function:

(i) $\|Mf\|_{L^2} \leq  C \|f\|_{L^2}$

(ii) $M(f+g) \leq Mf + Mg$.

Given $c^0$, we define $c$ as 
\[
c = \sum_{k=0}^\infty (2C)^{-k} M^k c^0.
\]
By property (i), this series converges in $\ell^2$, with 
\[
\|c\|_{\ell^2} \leq 2 \|c^0\|_{\ell^2}.
\]

Then by property (ii) we have 
\[
Mc \leq 2C c.
\]
The proof is concluded.
\end{proof}

For an interval $A \subset \Z$ we denote
\[
c_A^2 = \sum_{k \in A} c_k^2.
\]
Also for a dyadic integer $n$ we set
\[
c_n^2 = \sum_{|k| \approx n} c_k^2.
\]

Also given a translation invariant function space $X$, we denote by $X_c$ the associated frequency envelope controlled norm
\begin{equation}
    \label{Xc}
\|u\|_{X_c} = \sup_k c_k^{-1} \|u_k\|_{X}.
\end{equation}

\subsection{Multilinear forms and symbols}\label{s:multi}

A key notion which is used throughout the paper is that 
of multilinear form. 
All our multilinear forms are invariant with respect to 
translations, and have as arguments either complex valued 
functions or their complex conjugates. 

For an integer $k \geq 2$, we will use 
translation invariant $k$-linear  forms 
\[
(\mathcal D(\R))^{k} \ni (u_1, \cdots, u_{k}) \to     L(u_1,\bu_2,\cdots) \in \mathcal D'(\R),
\]
where the nonconjugated and conjugated entries are alternating.

Such a form is uniquely described by its symbol $\ell(\xi_1,\xi_2, \cdots,\xi_{k})$
via
\[
\begin{aligned}
L(u_1,\bu_2,\cdots)(x) = (2\pi)^{-k} & 
\int e^{i(x-x_1)\xi_1} e^{-i(x-x_2)\xi_2}
\cdots 
\ell(\xi_1,\cdots,\xi_{k})
\\ & \qquad 
u_1(x_1) \bu_2(x_2) \cdots  
dx_1 \cdots dx_{k} d\xi_1\cdots d\xi_k,
\end{aligned}
\]
or equivalently on the Fourier side
\[
\mathcal F L(u_1,\bu_2,\cdots)(\xi)
= (2\pi)^{-\frac{k-1}2} \int_{D}
\ell(\xi_1,\cdots,\xi_{k})
\hat u_1(\xi_1) \bar{\hat u}_2(\xi_2) \cdots  
d\xi_1 \cdots d\xi_{k-1},
\]
where, with alternating signs, 
\[
D = \{ \xi = \xi_1-\xi_2 + \cdots \}.
\]

They can also be described via their kernel
\[
L(u_1,\bu_2,\cdots)(x) =  
\int K(x-x_1,\cdots,x-x_{k})
u_1(x_1) \bu_2(x_2) \cdots  
dx_1 \cdots dx_{k},
\]
where $K$ is defined in terms of the  
Fourier transform  of $\ell$
\[
K(x_1,x_2,\cdots,x_{k}) = 
(2\pi)^{-\frac{k}2} \hat \ell(-x_1,x_2,\cdots,(-1)^k x_{k}).
\]

All the symbols in this article will be 
assumed to be smooth, bounded and with bounded derivatives.

We remark that our notation is slightly nonstandard because of the alternation of complex conjugates, which is consistent with the set-up of this paper. Another important remark is that, for $k$-linear forms, the cases of odd $k$, respectively even $k$ play different roles here, as follows:

\medskip

i) The $2k+1$ multilinear forms will be thought of as functions, e.g. those which appear 
in some of our evolution equations.

\medskip

ii) The $2k$ multilinear forms will be thought of as densities, e.g. which appear 
in some of our density-flux pairs.

\medskip
Correspondingly,  
to each $2k$-linear form $L$ we will associate
a $2k$-linear functional $\bL$ defined by 
\[
\bL(u_1,\cdots,u_{2k}) = \int_\R L(u_1,\cdots,\bu_{2k})(x)\, dx,
\]
which takes real or complex values.
This may be alternatively expressed 
on the Fourier side as 
\[
\bL(u_1,\cdots,u_{2k}) = (2\pi)^{1-k} \int_{D}
\ell(\xi_1,\cdots,\xi_{2k})
\hat u_1(\xi_1) \bar{\hat u}_2(\xi_2) \cdots  
\bar{\hat u}_{2k}(\xi_{2k})d\xi_1 \cdots d\xi_{2k-1},
\]
where, with alternating signs, the diagonal $D_0$ is given by
\[
D_0 = \{ 0 = \xi_1-\xi_2 + \cdots \}.
\]
Note that in order to define the multilinear functional $\bL$ we only need to know the symbol $\ell$ on $D_0$. There will be however 
more than one possible smooth extension of 
$\ell$ outside $D_0$. This will play a role in our story later on.

\subsection{Cubic interactions in Schr\"odinger flows}
Given three input frequencies $\xi_1, \xi_2,\xi_3$ for 
our cubic nonlinearity, the output will be at frequency 
\[
\xi_4 = \xi_1-\xi_2+\xi_3.
\]
This relation can be described in a more symmetric fashion as 
\[
\Delta^4 \xi = 0, \qquad \Delta^4 \xi := \xi_1-\xi_2+\xi_3-\xi_4 .
\]
This is a resonant interaction if and only if we have a similar relation for the associated time frequencies, namely 
\[
\Delta^4 \xi^2 = 0, \qquad \Delta^4 \xi^2 := \xi_1^2-\xi_2^2+\xi_3^2-\xi_4^2 .
\]
Hence, we define the resonant set in a symmetric fashion as 
\[
\calR := \{ \Delta^4 \xi = 0, \ \Delta^4 \xi^2 = 0\}.
\]
It is easily seen that this set may be characterized as
\[
\calR = \{ \{\xi_1,\xi_3\} = \{\xi_2,\xi_4\}\}.
\]

\subsection{The Galilean symmetry}
Here we investigate how the equation \eqref{nls}
changes if we apply a Galilean transformation. 
In particular, we will justify our claim in the introduction that the transformed equation is of the same type.

We first recall the linear case. Suppose $u$ 
solves the linear Schr\"odinger equation
\[
(i \partial_t + \partial_x^2)   u = f, \qquad u(0) = \du_0.
\]
Given a frequency $k$, its Galilean transform 
$v$ is defined by
\[
v(t,x) := e^{-i(kx+k^2 t)} u(t,x+2kt),
\]
and solves the linear Schr\"odinger equation
\[
(i \partial_t + \partial_x^2)   v = g, \qquad v(0) = \dv_0,
\]
where
\[
 \dv_0(x) = e^{-ik x} u_{0}, \qquad g(t,x) = e^{-i(kx+k^2 t)} f(t,x+2kt).
\]

Now suppose that $u$ solves \eqref{nls}.
Then the above computation shows that $v$ will solve a similar equation,
\[
(i \partial_t + \partial_x^2)   v = \tilde C(v,\bv,v),
\]
where
\[
\tilde C(v,\bv,v) = e^{-ikx} C(v e^{ikx}, \overline{v e^{ikx}}, v e^{ikx}).
\]
This allows us to compute the symbol of $\tilde C$
as 
\[
\tilde c(\xi_1,\xi_2,\xi_3) = c(\xi_1-k,\xi_2-k,\xi_3-k).
\]
This translated symbol is easily seen to have exactly the same properties as $c$.

\section{Local well-posedness }
\label{s:local}

Before approaching the global problem, an initial 
step is to establish local in time well-posedness.
Since we only assume boundedness and 
smoothness on the symbol $C$,
this is not an entirely straightforward matter.
Our main result can be summarily stated as follows:

\begin{theorem}\label{t:local}
The evolution \eqref{nls} is locally well-posed for small data in $L^2$.
\end{theorem}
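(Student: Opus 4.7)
The plan is a standard Picard iteration on the Duhamel formulation
\[
u(t) = S(t) \du_0 - i \int_0^t S(t-s) C(u, \bar u, u)(s)\, ds,
\]
where $S(t) = e^{it\partial_x^2}$. First I would set up a function space $X$ on a time slab $[0,T] \times \R$ adapted to the lattice frequency decomposition of Section~2 and to the $L^6_{t,x}$ endpoint Strichartz estimate in one space dimension; a natural starting candidate is
\[
\|u\|_X^2 := \sum_{k \in \Z} \bigl(\|u_k\|_{L^\infty_t L^2_x}^2 + \|u_k\|_{L^6_{t,x}}^2\bigr),
\]
possibly augmented by local smoothing or bilinear $L^2$ components so that the symbol regularity in (H1) can be exploited sharply. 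The ``wave packet type'' refinement referred to in the statement should amount to tracking the phase-space concentration of each $u_k$ along the characteristic $x \sim 2kt$; this is the natural Galilean-covariant setup and foreshadows the framework used later for the global result.

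The linear bounds $\|S(t)\du_0\|_X \lesssim \|\du_0\|_{L^2}$ and $\|\int_0^t S(t-s) f\, ds\|_X \lesssim \|f\|_{X^*}$ reduce, after summing over the lattice, to standard Strichartz theory and $TT^*$ duality applied piece by piece, and are unproblematic. The heart of the proof is the trilinear bound
\[
\|C(u, \bar u, u)\|_{X^*} \lesssim T^\alpha \|u\|_X^3
\]
for some $\alpha > 0$, and this is where I expect the main obstacle to lie: $C$ is not a pointwise product but a general translation-invariant trilinear form whose only structural property is the boundedness and smoothness in (H1). The strategy is to decompose each input on the unit frequency lattice. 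The piece $C(u_{k_1}, \bar u_{k_2}, u_{k_3})$ is output-localized at frequency $k_1 - k_2 + k_3$, and by (H1) the restricted symbol, after translation to the origin, has a Schwartz-tailed kernel on the unit scale. Hence $C(u_{k_1}, \bar u_{k_2}, u_{k_3})$ is a superposition of translates of the pointwise cubic product weighted by Schwartz kernels, and satisfies the same H\"older-type $L^{p_1} \times L^{p_2} \times L^{p_3} \to L^p$ estimates as ordinary multiplication. Interpolating inside $X$ between $L^\infty_t L^2_x$ and $L^6_{t,x}$ and applying H\"older in time on $[0,T]$ yields the required $T^\alpha$ gain; the sum over lattice triples is controlled using output frequency orthogonality together with a Schur test for the convolution constraint $k_4 = k_1 - k_2 + k_3$.

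Once the trilinear estimate is in hand, contraction closes on the ball of radius $\sim \epsilon$ in $X$ for $T$ small depending polynomially on $\epsilon$, yielding existence, uniqueness and continuous dependence on data. Uniqueness in the full Strichartz class, not just within the iteration ball, then follows from the same trilinear bound applied to the difference of two solutions combined with a Gronwall-type argument on a short subinterval. I expect the delicate step to be performing the lattice sum in a way that is both uniform in the constants from (H1) and manifestly Galilean invariant; this is precisely what the wave packet coordinates achieve, and it also explains why the local result is developed separately here as a prelude to the much sharper frequency envelope analysis driving the global theorem.
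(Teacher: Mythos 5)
Your setup diverges from the paper's proof in a way that creates a real gap at the heart of the argument: the lattice summation. The paper does \emph{not} use a Strichartz space; it defines
\[
\|u\|_X^2 = \sum_{k} \|u_k\|_{X_k}^2, \qquad
\|u_k\|_{X_k}^2 = \sum_{j} \|\chi_j(t,x-2kt)\,u_k\|_{L^\infty_t L^2_x}^2,
\]
i.e.\ a wave-packet decomposition where each $u_k$ is measured in $\ell^2_j L^\infty_t L^2_x$ on unit spatial cells moving at the group velocity $2k$. The entire point of this choice is that two packets at frequencies $k_1\neq k_2$ move with different speeds, so their moving cells overlap for a time of order $\la k_1-k_2\ra^{-1}$; this yields the transversal bilinear bound
\[
\|u_{k_1} v_{k_2}\|_{L^2_{t,x}} \lesssim \la k_1-k_2\ra^{-1/2}\, \|u_{k_1}\|_{X_{k_1}} \|v_{k_2}\|_{X_{k_2}},
\]
and applying it twice gains a full factor $\la \delta k^{\mathrm{hi}}\ra^{-1}$. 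That decay factor, combined with the maximal-function property of the admissible envelope $c_k$, is precisely what lets the sum over $k_1-k_2+k_3=k_4$ close to $\lesssim c_{k_4}$. In your version there is no such decay: with only $L^\infty_t L^2_x$ and $L^6_{t,x}$ components, each dyadic triple contributes a bounded multiple of $c_{k_1}c_{k_2}c_{k_3}$ with no gain in the frequency separations, and the convolution-type sum $\sum_{k_1-k_2+k_3=k_4} c_{k_1}c_{k_2}c_{k_3}$ need not be controlled by $c_{k_4}$ for a general $\ell^2$ envelope (Young's inequality gives $\ell^2*\ell^2\subset\ell^\infty$ and no better). ``Output frequency orthogonality plus a Schur test'' does not supply the needed weight for a Schur test to bite; you would need a kernel with decay in $|k_i-k_j|$, which is exactly what the paper's moving-cell $X$ space manufactures and your proposed $X$ does not.

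Two smaller but genuine mismatches: the paper proves contraction on the \emph{fixed} time interval $[0,1]$ using only the smallness of the data, with no $T^\alpha$ gain (the scaling of the symbol is not homogeneous, so the usual subcritical-time-gain heuristics for pointwise products do not transfer directly to the general bounded symbol $c$); the better lifespan $T_\epsilon\sim\epsilon^{-2}$ is then derived as a corollary by rescaling. And the paper explicitly does \emph{not} claim unconditional uniqueness in any Strichartz class, only uniqueness in a ball of the restricted space $X$; your final paragraph asserting uniqueness in the full Strichartz class is an overreach not supported by the estimates you propose. You do correctly anticipate in passing that a wave-packet / bilinear refinement of $X$ ``foreshadows the global framework,'' but in the actual argument you fall back on H\"older and interpolation, which is where the proof breaks.
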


Here we need to clarify the meaning of well-posedness.
For this problem, we will establish a semilinear type of well-posedness result. Precisely, for each initial data $\du_0$ which is small in $L^2$ a unique solution exists in $C([0,1];L^2)$, with Lipschitz dependence on the initial data.

However, as it is often the case in the dispersive realm, we will not try to prove unconditional uniqueness, and contend ourselves with having both existence and uniqueness of solutions in a ball in a restricted space $X \subset C([0,1];L^2)$. 

A natural follow-up question here would be whether 
the same result holds for large data in our context.
The answer is indeed affirmative; however, in this 
article we have chosen to only consider small data
because this is all we need on one hand, and a large
data result would require a more complex choice of the space $X$ mentioned above, as well as a correspondingly 
more complex proof, on the other hand.

Another related question is whether a standard scaling argument could be used here.  The scaling transformation would be the standard one for the cubic NLS problem,
\[
u_{\lambda}(t,x) = \lambda u(\lambda^2 t,\lambda x).
\]
For the initial data this corresponds to 
\[
\du_{0\lambda}(x) = \lambda \du_0(\lambda x).
\]

It is then easy to see that $u_\lambda$ solves an equation of the same type as \eqref{nls}, but with the rescaled symbol
\[
c_\lambda(\xi_1,\xi_2,\xi_3) = c(\xi_1/\lambda, \xi_2/\lambda,\xi_3/\lambda).
\]
This satisfies the bound \eqref{c-smooth} uniformly
only for $\lambda \geq 1$, so it cannot be used to reduce the large data problem to the small data problem.
However, it can be used to obtain better life-span bounds
for small data:

\begin{corollary}
Assume that the initial data $\du_0$ for \eqref{nls}
satisfies $\|\du_0\|_{L^2} \leq \epsilon$. Then the solution $u$ exists on $[0, T_\epsilon]$ with $T_\epsilon :=c \epsilon^{-2}$, with similar bounds.
\end{corollary}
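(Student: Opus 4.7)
The plan is to exploit the scaling $u_\lambda(t,x)=\lambda u(\lambda^2 t,\lambda x)$ introduced just before the corollary. The idea is simply to pick $\lambda\geq 1$ so that the rescaled initial data $\du_{0\lambda}$ falls below the smallness threshold of Theorem~\ref{t:local}, apply that theorem to $u_\lambda$ on the unit time interval $[0,1]$, and then undo the scaling to obtain $u$ on a correspondingly longer time interval.

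Two routine checks make this go through. First, the rescaled equation lies in the same class to which Theorem~\ref{t:local} applies: its symbol $c_\lambda(\xi_1,\xi_2,\xi_3)=c(\xi_1/\lambda,\xi_2/\lambda,\xi_3/\lambda)$ satisfies (H1) with constants $c_\alpha\lambda^{-|\alpha|}\leq c_\alpha$ for $\lambda\geq 1$, while the conservative condition (H2) and the defocusing condition (H3) are pointwise identities/inequalities manifestly preserved under the argument rescaling. This is exactly the point of the restriction $\lambda\geq 1$ highlighted in the paragraph preceding the statement. Second, since $\|\du_{0\lambda}\|_{L^2}=\lambda^{1/2}\|\du_0\|_{L^2}=\lambda^{1/2}\epsilon$, the choice $\lambda\sim\epsilon^{-1}$ produces rescaled data of size $\lesssim\epsilon^{1/2}$, which is well below the smallness threshold of Theorem~\ref{t:local} for $\epsilon\ll 1$; and $\lambda\geq 1$ automatically since $\epsilon\leq 1$.

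With this choice of $\lambda$, Theorem~\ref{t:local} provides a unique solution $v=u_\lambda\in C([0,1];L^2)$ together with the attendant function-space bounds. Undoing the scaling via $u(\tau,x)=\lambda^{-1}v(\tau/\lambda^2,x/\lambda)$ yields existence for $\tau\in[0,\lambda^2]$, and since $\lambda^2\sim\epsilon^{-2}$ we obtain exactly the claimed lifespan $T_\epsilon\sim\epsilon^{-2}$. The ``similar bounds'' on $u$ follow by transporting the bounds on $v$ back through the scaling, each norm picking up an explicit power of $\lambda$ that is harmless. There is no real obstacle here beyond the two checks above; the content of the corollary is that scaling converts extra smallness of the $L^2$ data into extra length of the lifespan, as long as $\lambda\geq 1$ so that the symbol class is preserved with uniform constants.
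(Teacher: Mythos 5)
Your scaling argument is correct, and it is precisely the argument the paper alludes to just above the corollary without spelling it out. The two checks you make---that $c_\lambda$ satisfies (H1)--(H3) uniformly for $\lambda\geq 1$, and that $\|\du_{0\lambda}\|_{L^2}=\lambda^{1/2}\|\du_0\|_{L^2}$---together with the choice $\lambda\sim\epsilon^{-1}$ are exactly what is needed to turn the unit-time local result of Theorem~\ref{t:local} into existence on $[0,\lambda^2]\sim[0,\epsilon^{-2}]$.
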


The rest of this section is devoted to the proof of Theorem~\ref{t:local}.  The first step in our proof is to construct a suitable function space $X$ where we seek the solutions.

Given a function $u$ in $[0,1] \times \R$, we start with a decomposition $u = \sum_{k \in \Z} u_k$ on the unit frequency scale, 
and then a partition of unity in the physical space,
also on the unit scale,
\[
1 = \sum_{j \in \Z} \chi_j(x). 
\]
Finally, we define the norm of the space $X$ for solutions
\begin{equation}\label{def-X}
\| u \|_{X}^2 = \sum_{k \in \Z} \| u_k\|_{X_k}^2, \qquad 
\|u_k\|_{X_k}^2= \sum_{j \in \Z} \| \chi_j (t, x-2t k) u_k \|_{L^\infty_t L^2_x}^2.      
\end{equation}
Here the second argument of $\chi_j$ is consistent with the group velocity of frequency $k$ waves. Indeed, 
if $u$ were an $L^2$ solution to the homogeneous Schr\"odinger equation then this would be nothing but a wave packet decomposition of $u$ on the unit time scale. It is easily seen that we have the embedding 
\[
X \subset L^\infty_t L^2_x.
\]
\begin{remark} \label{r:X}
Due to the unit frequency localization of $u_k$ and Bernstein's inequality,
we may freely replace the $L^\infty_t L^2_x$ norm in 
\eqref{def-X} by $L^\infty_{t,x}$.
\end{remark}

Correspondingly, we define a similar space $Y$ for the source term in a linear Schr\"odinger equation, namely
\begin{equation}\label{def-Y}
\| f \|_{Y}^2 = \sum_{k \in \Z} \| f_k\|_{Y_k}^2, \qquad 
\| f_k\|_{Y_k}^2 = \sum_{j\in Z}  \| \chi_j (t, x-2t k) f_k \|_{L^1_t L^2_x}^2  ,    
\end{equation}
so that we have the duality relation 
\[
X = Y^*,
\]
with equivalent norms.

Then for the small data local well-posedness result in $X$
it suffices to establish the following two properties. The first is a linear mapping property:

\begin{lemma}\label{l:lwp-lin}
The solution to the linear Schr\"odinger equation 
\begin{equation}
(i \partial_t + \partial_x^2)   u = f, \qquad u(0) = \du_0
\end{equation}
in the time interval $[0,1]$ satisfies 
\begin{equation}
\| u\|_{X}    \lesssim \|\du_0\|_{L^2} + \|f\|_{Y}.
\end{equation}
\end{lemma}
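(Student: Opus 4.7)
I would proceed in three steps. First, decouple the estimate across frequency bands: since $X$ and $Y$ are $\ell^2_k$ sums of the tube norms $X_k$ and $Y_k$, and since the free Schr\"odinger flow commutes with the unit-scale frequency projectors $P_k$, it suffices to prove the uniform single-frequency bound $\|u_k\|_{X_k} \lesssim \|\du_{0,k}\|_{L^2} + \|f_k\|_{Y_k}$ for each $k$, where $u_k := P_k u$, $\du_{0,k} := P_k \du_0$, $f_k := P_k f$. Second, apply the Galilean transform from Section~2.5,
\[
v(t,x) := e^{-i(kx + k^2 t)} u_k(t, x + 2kt),
\]
to move the frequency support of $u_k$ down to a fixed neighborhood of $0$. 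The moving spatial tubes in the definition of $X_k$, centered at $x = 2tk + j$, simultaneously become stationary tubes centered at $x = j$, and $|v| = |u_k|$ pointwise so the $L^\infty_t L^2_x$ and $L^1_t L^2_x$ norms are preserved. This reduces the claim to the $k=0$ version
\[
\sum_j \|\chi_j v\|_{L^\infty_t L^2_x}^2 \lesssim \|\dv_0\|_{L^2}^2 + \sum_j \|\chi_j g\|_{L^1_t L^2_x}^2,
\]
where $v$ solves a linear Schr\"odinger equation with unit-frequency data $\dv_0 := e^{-ikx} \du_{0,k}$ and source $g(t,x) := e^{-i(kx+k^2 t)} f_k(t,x+2kt)$.

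Third, prove the reduced estimate via a direct kernel bound. Since $v$ has Fourier support in a fixed compact neighborhood of $0$, one can write $e^{it\Delta}$ acting on such functions as convolution with a kernel $K_t$ which is uniformly Schwartz on $|t| \leq 1$: $|K_t(x)| \lesssim \langle x\rangle^{-N}$ for any $N$. Decomposing $\dv_0 = \sum_{j'} \chi_{j'}\dv_0$ and using $|K_t(x-y)| \lesssim \langle j - j'\rangle^{-N}$ on $\mathrm{supp}\,\chi_j \times \mathrm{supp}\,\chi_{j'}$, together with Cauchy--Schwarz on the unit-length support of $\chi_{j'}$, yields the pointwise almost-diagonal bound
\[
\|\chi_j e^{it\Delta}\dv_0\|_{L^2_x} \lesssim \sum_{j'} \langle j - j'\rangle^{-N} \|\chi_{j'}\dv_0\|_{L^2_x},
\]
uniformly in $t \in [0,1]$. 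Schur's lemma applied to the kernel $\langle j-j'\rangle^{-N}$ on $\ell^2_j \to \ell^2_j$ then gives the homogeneous half of the bound. The inhomogeneous half follows from Duhamel's formula, the same kernel estimate applied to $e^{i(t-s)\Delta} g(s)$ for each $s \in [0,1]$, and Minkowski's integral inequality to pull the $ds$ integral outside the $L^2_x$ norm, matching the $L^1_t L^2_x$ structure of $Y_k$.

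The main obstacle, and the only nontrivial point, is the precise alignment between the moving tubes in the definition of $X_k$ and the group velocity $2k$ of frequency-$k$ waves: without this alignment, a frequency-$k$ wave packet drifts across $O(k)$ tubes in unit time, so the off-diagonal matrix $\langle j-j'\rangle^{-N}$ would be replaced by a much less summable expression, and the Schur step would fail. The Galilean reduction in Step~2 is exactly the device that brings the tubes into alignment with the propagation direction, which is why the shift $2tk$ chosen in \eqref{def-X} is not cosmetic but essential. Once this reduction is in place, the rest of the argument is essentially the elementary observation that unit-frequency Schr\"odinger propagation over unit time is an almost-diagonal operation on the unit-scale spatial tube decomposition.
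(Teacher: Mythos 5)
Your proposal is correct and follows essentially the same path as the paper's proof: reduce to a single frequency band, Galilean-shift to frequency $0$, then use the uniform Schwartz-kernel almost-orthogonality estimate $\|\chi_j e^{it\partial_x^2} P_0 \chi_l\|_{L^2\to L^2}\lesssim \langle j-l\rangle^{-N}$ together with Duhamel and summation in $j$. The only cosmetic difference is that you name the final summation step as Schur's lemma and add a motivating remark about the role of the drift $2kt$, whereas the paper states the $\ell^2$ conclusion directly; the substance is identical.
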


The second is an estimate for the nonlinearity: 

\begin{lemma}\label{l:lwp-cubic}
For the cubic nonlinearity $C$ we have the bound
\begin{equation}\label{C-Y}
\| C(u,\bar u,u) \|_{Y} \lesssim \|u\|_{X}^3  .  
\end{equation}
\end{lemma}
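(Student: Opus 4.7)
The plan is to exploit the wave-packet structure of $X$: each slab $u_k$ decomposes as $u_k = \sum_j v^k_j$ with $v^k_j = \chi_j(x-2tk) u_k$ essentially supported on a unit-width tube $T^k_j$ of velocity $2k$, and by Bernstein (Remark~\ref{r:X}) each tube factor satisfies the pointwise bound $\|v^k_j\|_{L^\infty_{t,x}} \lesssim a^k_j$, where $\sum_j (a^k_j)^2 \lesssim \|u_k\|_{X_k}^2$. Since (H1) ensures that the symbol $c$ is smooth with bounded derivatives, the kernel of $C$ is Schwartz on the unit scale, so $C$ may be treated as an essentially pointwise product up to rapid-decay convolutions, which are harmless for both $X$ and $Y$.

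The strategy is to expand
\[
P_{k_4} C(u, \bar u, u) = \sum_{\substack{k_1,k_2,k_3 \\ k_1 - k_2 + k_3 = k_4 + O(1)}} \sum_{j_1, j_2, j_3} P_{k_4} C(v^1_{j_1}, \bar v^2_{j_2}, v^3_{j_3})
\]
and exploit the geometry of tube overlaps. For fixed output index $j_4$, four unit-width tubes at velocities $2k_1, 2k_2, 2k_3, 2k_4$ intersect in a space-time parallelogram of area $\lesssim 1/A$ with $A := \max_{i \neq i'} |k_i - k_{i'}|$; moreover, the lattice triples $(j_1,j_2,j_3)$ producing a nonempty four-fold intersection lie on an essentially one-dimensional curve in $\Z^3$ traced out by time, containing only $\sim A$ integer points. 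H\"older on this intersection yields
\[
\|\chi_{j_4}(x - 2tk_4)\, C(v^1_{j_1}, \bar v^2_{j_2}, v^3_{j_3})\|_{L^1_t L^2_x} \lesssim A^{-1}\, a^1_{j_1} a^2_{j_2} a^3_{j_3}.
\]

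Given this per-tube estimate, I would close in two stages. First, for each fixed frequency quadruple, I sum the pointwise bound over $(j_1, j_2, j_3, j_4)$ using Cauchy--Schwarz along the time-parametrized curve together with a bijective change of variables $(j_4, t) \leftrightarrow (j_i, t)$, to obtain
\[
\|P_{k_4} C(u_{k_1}, \bar u_{k_2}, u_{k_3})\|_{Y_{k_4}} \lesssim D(k_1, k_2, k_3, k_4) \prod_{i=1}^3 \|u_{k_i}\|_{X_{k_i}},
\]
where the transversality weight $D$ reflects the frequency separations. Second, I sum over $(k_1, k_2, k_3)$ constrained by $k_1 - k_2 + k_3 = k_4 + O(1)$, and then over $k_4$; the single resonance relation together with the decay in $D$ permits a Schur-type $\ell^2 \to \ell^2$ estimate which closes $\|C(u, \bar u, u)\|_Y \lesssim \|u\|_X^3$.

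The main obstacle is the careful treatment of partially resonant quadruples in which one or more of the $k_i$ coincide with $k_4$, so the geometric decay $1/A$ degenerates along the parallel directions. In those directions the argument must revert to direct $\ell^2$ summation in the corresponding $j$-indices, using the full $L^\infty_t L^2_x$ control built into $X_{k_i}$ rather than its $L^\infty_{t,x}$ Bernstein consequence. The fully resonant diagonal $k_1 = k_2 = k_3 = k_4$ is easiest: it collapses to a single-slab cubic estimate on one output tube, and the remaining $j_4$- and $k_4$-sums close through the elementary $\ell^2 \hookrightarrow \ell^6$ embedding on sequences.
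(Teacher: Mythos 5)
Your tube-geometry observation is essentially equivalent to the paper's key ingredient. The paper packages exactly this transversality into a clean bilinear $L^2$ estimate \eqref{separate-bi}, which states that for unit frequency slabs the product $u_{k_1}v_{k_2}$ gains a factor $\la k_1-k_2\ra^{-1/2}$ in $L^2_{t,x}$; applying it twice across the two $\delta k^{hi}$-separated pairs dictated by the resonance relation $k_1-k_2+k_3-k_4=0$ yields the factor $\la \delta k^{hi}\ra^{-1}$, which is your $A^{-1}$. The paper also dualizes first (estimating the integral $I = \int C(u,\bu,u)\bv_k\,dx\,dt$), which avoids ever having to decompose the output in tubes directly, so your first stage (the $(j_4,t)\leftrightarrow(j_i,t)$ change of variables, with the degenerate-directions caveat you flag) is a somewhat more laborious way of reaching the same intermediate bound $\|P_k C(u_{k_1},\bu_{k_2},u_{k_3})\|_{Y_k}\lesssim \la\delta k^{hi}\ra^{-1}\prod\|u_{k_i}\|_{X_{k_i}}$.

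The main concern is your second stage, which you leave at the level of a claim. The paper does \emph{not} sum this kernel by a raw Schur test. It instead proves the stronger frequency-envelope form (Lemma~\ref{l:lwp-cubic-fe}): after normalizing $\|u\|_{X_c}=1$ and applying the bilinear bound twice it must estimate
\[
\sum_n \frac1n \sum_{|k_1-k|\lesssim n} c_{k_1}\, c_n^2,
\]
and it closes this by the maximal-function property $\frac1n\sum_{|k_1-k|\lesssim n}c_{k_1}\lesssim c_k$, which is precisely why admissible (maximal) frequency envelopes were introduced in Section~\ref{s:fe}. A naive Schur attack on the trilinear kernel $\la\delta k^{hi}\ra^{-1}$ does have a latent logarithmic divergence in the dyadic scale $n$, and avoiding it requires exploiting the fact that the two $n$-separated pairs produced by the resonance relation both lie in a common $O(n)$-shell centered at the output; a dyadic-shell Cauchy--Schwarz in the style $\sum_l B^{(l)}(k)^2\lesssim\|b\|_{\ell^2}^2$ can close the $\ell^2\to\ell^2$ bound, so your route is salvageable, but it must actually be executed rather than asserted. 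Finally, note that even if you do close the Schur estimate, you will have proved only Lemma~\ref{l:lwp-cubic}; the frequency-envelope version Lemma~\ref{l:lwp-cubic-fe} is what the paper really needs downstream (Theorem~\ref{t:local-fe}, Corollary~\ref{c:local-fe}, and the bootstrap of Section~\ref{s:boot}), and that version genuinely requires the maximal-envelope machinery you bypass.
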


Once we have these two lemmas, the proof of the local well-posedness result follows in a standard manner using the contraction principle in a small ball in $X$. However, for later use we also need to have 
a more precise, frequency envelope version of Theorem~\ref{t:local}.
This is as follows:

\begin{theorem}\label{t:local-fe}
For each small initial data 
\[
\|\du_0\|_{L^2} \leq \epsilon \ll 1
\]
there exists a unique solution $u$ to \eqref{nls} which is small in $X$.
In addition, suppose $c_k$ is an $\ell^2$ normalized  admissible frequency envelope so that 
\[
\|\du_0\|_{L^2_c} \lesssim \epsilon.
\]
Then the solution $u$ satisfies
\begin{equation}
\| u \|_{X_c} \lesssim \epsilon. 
\end{equation}
\end{theorem}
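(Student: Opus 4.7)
The argument splits into two independent steps: existence and uniqueness via a contraction in $X$, followed by the envelope upgrade.

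\emph{Step 1 -- existence and uniqueness in $X$.} Writing the equation in Duhamel form $u = \Phi(u) := S(t)\du_0 - i\int_0^t S(t-s) C(u,\bar u, u)(s)\,ds$, where $S(t)$ is the free Schr\"odinger propagator, Lemma~\ref{l:lwp-lin} together with the polarized form of Lemma~\ref{l:lwp-cubic} (needed to control differences) yields
\[
\|\Phi(u)\|_X \lesssim \epsilon + \|u\|_X^3, \qquad \|\Phi(u)-\Phi(v)\|_X \lesssim (\|u\|_X^2 + \|v\|_X^2)\,\|u-v\|_X.
\]
On a ball of radius $\approx \epsilon$ in $X$ the map $\Phi$ is a contraction, producing the unique small solution $u$ with $\|u\|_X \lesssim \epsilon$.

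\emph{Step 2 -- envelope bound.} With $u$ in hand, the plan is to close a second iteration in the envelope-weighted norm $X_c$. Applying Lemma~\ref{l:lwp-lin} frequency slice by frequency slice (since $P_k$ commutes with the Schr\"odinger flow) gives
\[
\|u\|_{X_c} \lesssim \|\du_0\|_{L^2_c} + \|C(u,\bar u, u)\|_{Y_c} \lesssim \epsilon + \|C(u,\bar u,u)\|_{Y_c}.
\]
The heart of the matter is then a \emph{tame} envelope version of the cubic estimate,
\[
\|C(u,\bar u, u)\|_{Y_c} \lesssim \|u\|_X^2 \,\|u\|_{X_c},
\]
which, combined with $\|u\|_X \lesssim \epsilon$, gives $\|u\|_{X_c} \lesssim \epsilon + \epsilon^2 \|u\|_{X_c}$ and hence the claimed bound after absorbing the small term on the right.

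\emph{Main obstacle.} The difficulty is the tame envelope cubic estimate just stated. I expect to derive it from a frequency-localized, trilinearized refinement of Lemma~\ref{l:lwp-cubic} of the form
\[
\|P_k C(u_1, \bar u_2, u_3)\|_{Y_k} \lesssim \sum_{k_1-k_2+k_3 \in [k-1,k+1]} \mathcal{K}(k_1,k_2,k_3;k)\, \|u_{1,k_1}\|_{X_{k_1}}\|u_{2,k_2}\|_{X_{k_2}}\|u_{3,k_3}\|_{X_{k_3}},
\]
where $\mathcal{K}$ is a Schur-type kernel encoding the transversality gain built into the wave-packet structure of $X$ through the different group velocities $2k_i$ versus $2k$. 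The envelope bound is then obtained by placing the envelope weight $c_{k_i}\|u\|_{X_c}$ on the single input closest to the output frequency $k$, and summing the other two factors in $\ell^2$, bounded by $\|u\|_X^2$. At this last step the admissibility of $c$, i.e.\ the maximal property $Mc \lesssim c$, is essential: it converts the local averages of $c$ near $k$ that emerge from the Schur summation back into a factor of $c_k$. Uniqueness of the envelope solution is then inherited directly from $X$-uniqueness.
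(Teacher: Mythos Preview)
Your proposal is correct and is essentially the paper's argument: the paper packages the cubic bound as $\|C(u,\bar u,u)\|_{Y_c} \lesssim \|u\|_{X_c}^3$ (Lemma~\ref{l:lwp-cubic-fe}) and contracts directly in $X_c$, but the proof of that lemma is exactly the tame computation you outline---one factor carries the envelope via the maximal property, the other two are summed in $\ell^2$ by Cauchy--Schwarz, and the transversality gain $\langle \delta k^{hi}\rangle^{-1}$ comes from the wave-packet bilinear bound~\eqref{separate-bi}. One small technical point in your Step~2: absorbing $\epsilon^2\|u\|_{X_c}$ on the right presupposes $\|u\|_{X_c} < \infty$, which is not automatic since $c_k$ can be arbitrarily small; cure this either by running the estimate uniformly along the Picard iterates (which do lie in $X_c$ by induction) or by contracting in $X_c$ from the outset as the paper does.
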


This requires stronger, frequency envelope versions of 
Lemmas~\ref{l:lwp-lin}, \ref{l:lwp-cubic}:

\begin{lemma}\label{l:lwp-lin-fe}
The solution to the linear Schr\"odinger equation 
\begin{equation}
(i \partial_t + \partial_x^2)   u = f, \qquad u(0) = \du_0
\end{equation}
in the time interval $[0,1]$ satisfies 
\begin{equation}
\| u\|_{X_c}    \lesssim \|\du_0\|_{L^2_c} + \|f\|_{Y_c}.
\end{equation}
\end{lemma}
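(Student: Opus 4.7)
The plan is to reduce the frequency-envelope statement directly to the unweighted linear bound of Lemma~\ref{l:lwp-lin}, exploiting the fact that the free Schr\"odinger propagator preserves the unit-frequency localization exactly. Since $P_k$ commutes with $i\partial_t + \partial_x^2$, applying $P_k$ to the equation shows that for each $k \in \Z$, the component $u_k = P_k u$ solves
\begin{equation*}
(i\partial_t + \partial_x^2) u_k = f_k, \qquad u_k(0) = P_k \du_0,
\end{equation*}
independently of the other frequency components. So the ``nonlinear'' complication of transferring information between frequencies does not arise at the linear level; in particular the maximal property of the envelope $c_k$ is not needed here and will only play a role in the companion lemma for the cubic nonlinearity.

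The main step is then to apply Lemma~\ref{l:lwp-lin} to each $u_k$, yielding
\begin{equation*}
\|u_k\|_X \lesssim \|P_k \du_0\|_{L^2} + \|f_k\|_Y.
\end{equation*}
Because $u_k$ is supported in frequency in $[k-1, k+1]$, the only non-vanishing summands in the defining expressions $\|u_k\|_X^2 = \sum_j \|P_j u_k\|_{X_j}^2$ and $\|f_k\|_Y^2 = \sum_j \|P_j f_k\|_{Y_j}^2$ are those with $|j-k| \leq 1$. By the bounded overlap of the bump functions $p_k$, this gives $\|u_k\|_X \approx \|u_k\|_{X_k}$ and $\|f_k\|_Y \approx \|f_k\|_{Y_k}$, where the wave-packet velocities $2j$ used in the $X_j$, $Y_j$ norms agree with $2k$ up to $O(1)$, so one can freely absorb the mismatch of drift rates in the $\chi_j$ partition.

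Finally, divide both sides by $c_k$ and take the supremum over $k$, using the definition \eqref{Xc} of the envelope-controlled norm:
\begin{equation*}
\|u\|_{X_c} = \sup_k c_k^{-1}\|u_k\|_X \lesssim \sup_k c_k^{-1}\bigl(\|P_k \du_0\|_{L^2} + \|f_k\|_Y\bigr) \lesssim \|\du_0\|_{L^2_c} + \|f\|_{Y_c}.
\end{equation*}
The only point requiring care, and the closest thing to an obstacle, is the bookkeeping identification $\|u_k\|_X \approx \|u_k\|_{X_k}$ and the corresponding reconciliation of the group velocities $2j$ versus $2k$ in the wave-packet partition; this is handled by the bounded overlap of the frequency partition $\{p_k\}$ and a trivial reindexing of the spatial partition $\{\chi_j\}$. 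No interaction between distinct frequencies ever occurs, which is why the proof is essentially a one-line reduction to Lemma~\ref{l:lwp-lin}.
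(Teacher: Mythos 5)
Your reduction is correct and matches the paper's approach in substance: the paper proves Lemmas~\ref{l:lwp-lin} and~\ref{l:lwp-lin-fe} simultaneously by reducing both to the single frequency-localized bound $\|u_k\|_{X_k} \lesssim \|\du_{0k}\|_{L^2} + \|f_k\|_{Y_k}$, which rests precisely on the commutation of $P_k$ with the linear flow that you invoke. Your detour through $\|u_k\|_X \approx \|u_k\|_{X_k}$ is a harmless (and in fact unnecessary) repackaging, since $\|u\|_{X_c}$ and $\|f\|_{Y_c}$ are defined in \eqref{Xc} directly via $\|u_k\|_X$ and $\|f_k\|_Y$, so one may divide the unweighted bound for $u_k$ by $c_k$ and take the supremum without ever passing to the single-block norms.
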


The second is an estimate for the nonlinearity: 

\begin{lemma}\label{l:lwp-cubic-fe}
 Let $c_k$ be an $\ell^2$ normalized  admissible frequency envelope.
Then for the cubic nonlinearity $C$ we have the bound
\begin{equation}\label{C-Y-fe}
\| C(u,\bar u,u) \|_{Y_c} \lesssim \|u\|_{X_c}^3  .  
\end{equation}
\end{lemma}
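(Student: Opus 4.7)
The plan is to lift the proof of Lemma~\ref{l:lwp-cubic} to the frequency-envelope setting. The key intermediate step is to extract from that proof a sharper frequency-localized trilinear bound, tracking how the $Y_{k_4}$-norm of the output depends separately on the three inputs at frequencies $k_1, k_2, k_3$.

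First I would decompose $u = \sum_k u_k$ and write, for each output frequency $k_4$,
\[
P_{k_4} C(u,\bar u, u) = \sum_{k_1, k_2, k_3} P_{k_4} C(u_{k_1}, \bar u_{k_2}, u_{k_3}).
\]
The unit-scale frequency localization of each $u_k$, together with the translation invariance of $C$, forces the contribution to vanish unless $k_1 - k_2 + k_3 = k_4 + O(1)$. Within this constraint, the analysis splits into a resonant and a non-resonant regime.

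In the resonant regime, where $\{k_1, k_3\} = \{k_2, k_4\}$ up to $O(1)$, H\"older's inequality on the tubes of the $X/Y$ wave-packet decomposition yields
\[
\|P_{k_4} C(u_{k_1}, \bar u_{k_2}, u_{k_3})\|_{Y_{k_4}} \lesssim \|u_{k_4}\|_{X_{k_4}} \|u_{k_2}\|_{X_{k_2}}^2.
\]
Applying the envelope inequality $\|u_{k_i}\|_{X_{k_i}} \leq c_{k_i} \|u\|_{X_c}$ yields a factor $c_{k_4} c_{k_2}^2 \|u\|_{X_c}^3$, and summing in $k_2$ via $\sum_{k_2} c_{k_2}^2 = \|c\|_{\ell^2}^2 \lesssim 1$ already produces the desired bound $c_{k_4}\|u\|_{X_c}^3$.

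In the non-resonant regime, the group velocities $2k_1, 2k_2, 2k_3, 2k_4$ are genuinely separated, and the proof of Lemma~\ref{l:lwp-cubic} then produces additional decay in the frequency-localized estimate coming from the restricted spacetime overlap of tubes of distinct velocities on the interval $[0,1]$. Schematically,
\[
\|P_{k_4} C(u_{k_1}, \bar u_{k_2}, u_{k_3})\|_{Y_{k_4}} \lesssim d(k_1, k_2, k_3, k_4) \prod_{i=1}^{3} \|u_{k_i}\|_{X_{k_i}},
\]
where $d$ is summable over any one of the four frequency indices, uniformly in the other three. Inserting the envelope inequality, the triple sum over $(k_1, k_2, k_3)$ is controlled first by using Cauchy--Schwarz together with $\|c\|_{\ell^2}\lesssim 1$ to absorb two of the envelope factors, and then by invoking admissibility of $c$, namely $Mc \lesssim c$, to dominate the remaining averaged envelope factor by $c_{k_4}$ pointwise.

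The main obstacle is precisely this non-resonant convolution bound: one must arrange the Cauchy--Schwarz and maximal function estimates so that the convolution-type sum of envelopes collapses pointwise to $c_{k_4}$. This is exactly where the admissibility property --- strictly stronger than slow variation at the unit scale --- becomes indispensable, since an envelope which is merely slowly varying in the dyadic sense would be too weak to collapse a unit-scale convolution. Once both regimes are handled, taking $\sup_{k_4} c_{k_4}^{-1}$ of the resulting $Y_{k_4}$-bound yields the claimed inequality $\|C(u, \bar u, u)\|_{Y_c} \lesssim \|u\|_{X_c}^3$.
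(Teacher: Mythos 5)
Your high-level plan matches the paper's: decompose on the unit frequency scale, get decay from separated group velocities, then close the triple sum by combining Cauchy--Schwarz with the maximal-function property of the envelope. The decisive step is, however, left at the ``schematic'' level, and the property you ascribe to the decay weight $d$ is actually false. The bound that the proof hinges on is the bilinear tube-overlap estimate
$\|u_{k_1} v_{k_2}\|_{L^2_{t,x}} \lesssim \la k_1 - k_2\ra^{-1/2}\|u_{k_1}\|_{X_{k_1}}\|v_{k_2}\|_{X_{k_2}}$,
obtained by counting the time during which unit-speed-separated wave packets can overlap. Dualizing $Y_{k_4}$ against a test function $v_{k_4}\in X_{k_4}$, one applies this twice to the two pairs (guaranteed, by the lattice constraint $k_1-k_2+k_3=k_4$, to each have separation $\approx \delta k^{hi}$) and gets the decay factor $\la \delta k^{hi}\ra^{-1}$. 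This is \emph{not} summable over a single free index uniformly in the other two: with $k_4$ and two of the $k_j$ fixed, summing $\la \delta k^{hi}\ra^{-1}$ over the last index diverges logarithmically. If the summability you claim for $d$ were true, the lemma would follow immediately from an $\ell^1*\ell^\infty$-type convolution bound, and no maximal-function/admissibility hypothesis would be needed --- contradicting your own (correct) remark that admissibility is indispensable. That internal tension is the signal that the key estimate is missing.

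What actually makes the sum close is more delicate: after fixing the dyadic scale $n \approx \delta k^{hi}$ and the one input frequency $k_1$ closest to $k_4$, one applies Cauchy--Schwarz to the remaining pair $(k_2,k_3)$ (producing the dyadic piece $c_n^2=\sum_{|j-k_4|\approx n}c_j^2$), and only then uses the maximal-function property to bound the $n$-term average $\frac1n\sum_{|k_1-k_4|\lesssim n}c_{k_1}\lesssim c_{k_4}$. Summing $c_n^2$ over dyadic $n$ uses $\|c\|_{\ell^2}\approx 1$ at the very end. Your description ``Cauchy--Schwarz together with $\|c\|_{\ell^2}\lesssim1$ to absorb two factors, then admissibility for the third'' is the right skeleton, but without the precise decay factor it is not verifiable that the pieces fit; and because the only decay available is $\la\delta k^{hi}\ra^{-1}$, the order of operations (Cauchy--Schwarz \emph{at fixed dyadic scale}, then maximal function on the one-dimensional sum over the $n$-ball) is forced, not a matter of bookkeeping. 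Your resonant-case reduction is fine, but since the same Hölder bound $\|P_{k_4}C(u_{k_1},\bar u_{k_2},u_{k_3})\|_{Y_{k_4}}\lesssim\prod\|u_{k_i}\|_{X_{k_i}}$ holds regardless of resonance, the split buys nothing: the content is entirely in the derivation and summation of the $\la\delta k^{hi}\ra^{-1}$ gain, which is the part you have not supplied.
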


\begin{proof}[Proof of Lemmas~\ref{l:lwp-lin},
~\ref{l:lwp-lin-fe}]
We can freely localize on the unit scale in frequency,
and reduce the problem to the frequency localized estimate
\begin{equation}\label{uk-bd}
\|u_k\|_{X_k}    \lesssim \|\du_{0k}\|_{L^2} + \|f_k\|_{Y_k}.
\end{equation}
We can further reduce the problem by applying a Galilean transformation, by setting 
\[
v(t,x) = e^{-i(kx+k^2 t)} u_k(t,x-2kt), \quad \dv_0(x) = e^{-ik x} \du_{0k}, \quad g(t,x) = e^{-i(kx+k^2 t)} f_k(t,x-2kt).
\]
Here the functions $v,\dv_0,g$ are now localized at frequency $0$
and solve
\[
(i \partial_t + \partial_x^2)   v = g, \qquad v(0) = \dv_0,
\]
whereas the bound \eqref{uk-bd} reduces to 
\begin{equation}\label{v-bd}
  \|v\|_{X_0}    \lesssim \|\dv_{0}\|_{L^2} + \|g\|_{Y_0}. 
\end{equation}
Inserting a harmless frequency localization $P_0$, we represent $v$ as 
\[
v(t) = e^{it \partial_x^2} P_0 \dv_0 -i  \int_0^t e^{i(t-s) \partial_x^2} P_0 g\, ds .
\]
Here by a slight abuse a notation we allow $P_0$ to have slightly larger support.
Finally, we localize spatially at both ends,
\[
\chi_j v(t) = \sum_{l \in \Z} \left( \chi_j e^{it \partial_x^2} P_0 \chi_l \dv_0 +  \int_0^t \chi_j e^{i(t-s) \partial_x^2}  P_0 \chi_l g \, ds\right).
\]
Here the kernels for $e^{it \partial_x^2} P_0$ are uniformly 
Schwartz for $t \in [0,1]$, so we get an $L^2$ bound with off-diagonal decay,
\[
\| \chi_j e^{it \partial_x^2} P_0 \chi_l\|_{L^2 \to L^2}
\lesssim \la j-l\ra^{-N}.
\]
This implies that 
\[
\| \chi_j v\|_{L^\infty_t L^2_x} \lesssim 
\sum_{l \in \Z} \la j-l\ra^{-N} \left( \|\chi_l \dv_{0}\|_{L^2_{x}} + \|\chi_l g\|_{L^1_t L^2_x}\right),
\]
which in view of the off-diagonal decay implies the bound \eqref{v-bd}. 

\end{proof}

\begin{proof}[Proof of Lemmas~\ref{l:lwp-cubic},
\ref{l:lwp-cubic-fe}]
Here the second lemma implies the first.
We need to prove the estimate
\[
\| P_{k} C(u,\bu,u)\|_{Y_{k}} \lesssim c_k \| u\|_{X_c}^3. 
\]
By duality, this reduces to 
the integral bound 
\begin{equation}\label{I-est}
\left| I \right| \lesssim c_k \| u\|_{X_c}^3 \|v_k\|_{X_k}, \qquad     I = \int C(u,\bu,u) \bv_k \, dx dt.
\end{equation}
Without any restriction in generality we may assume 
that 
\[
\|u\|_{X_c} = 1, \qquad \| v_k\|_{X_k}=1.
\]
We use the unit scale frequency decomposition to separate the above integral as 
\[
I = \sum_{k_1-k_2+k_3=k}
\int C_{k_1k_2k_3}(u_{k_1},\bu_{k_2},u_{k_3}) \bv_{k} \, dx dt,
\]
where we have also localized the kernel of $C$ near frequencies 
$k_1$, $k_2$, $k_3$ on the unit scale. The symbol of $C_{k_1k_2k_3}$ is smooth and bounded on the unit scale, so
the above summands are essentially like products, and may be indeed thought of as products via separation of variables. 
For bilinear products we have the estimate
\begin{equation}\label{separate-bi}
\|    u_{k_1} v_{k_2}\|_{L^2_{x,t}} \lesssim \frac{1}{\la k_1-k_2\ra^\frac12} \|   u_{k_1}\|_{X_{k_1}} \|v_{k_2}\|_{X_{k_2}}.
\end{equation}
This is obtained simply by examining the intersection of the supports of the bump functions traveling with speeds $2k_1$, respectively $2k_2$. 

Denoting   $\delta k^{hi} = \max |k_i - k_j|$, the relation $k_1+k_3 = k_2+k$ insures 
that we can group the four frequencies into two pairs at distance $\delta k^{hi}$. Then, using twice the above bilinear estimate, we have
\[
|I| \lesssim \sum_{k_1-k_2+k_3-k= 0} 
\frac{1}{\la \delta k^{hi}\ra} c_{k_1} c_{k_2} c_{k_3} .
\]
Let $n$ represent the dyadic size of 
$\delta k^{hi}$.
Without loss of generality, by relabeling, 
suppose that 
\[
|k-k_3| \approx |k-k_2| \approx n, \qquad 
|k - k_1| \lesssim n.
\]
 Then using the Cauchy-Schwartz inequality for the pair 
 $(k_2,k_3)$ for fixed $k_1$ we estimate
\[
|I| \lesssim \sum_{n} \frac{1}{n}\sum_{|k_1-k| \lesssim n} 
c_{k_1} c_{n}^2, \qquad c_n^2:= \sum_{|j-k|\approx n} c_j^2.
\]
Now we use the maximal function inequality 
for $c$, which gives 
\[
\frac{1}{n} \sum_{|k_1-k| \lesssim n} 
c_{k_1} \lesssim c_{k}.
\]
We obtain
\[
|I| \lesssim c_{k} \sum_{n} c_n^2 \approx c_k. 
\]
 Thus \eqref{I-est} is proved.
\end{proof}

For later use, we note that the frequency envelope bounds for $u$ together with the bilinear $L^2$ bound \eqref{separate-bi} imply the following 

\begin{corollary}\label{c:local-fe}
Let $u$ be a solution for \eqref{nls} in $[0,1]$
as in Theorem~\ref{t:local-fe}. Then the following bounds hold:
\begin{equation}
\| u_k \|_{L^6_{t,x}} \lesssim \epsilon c_k,    
\end{equation}

\begin{equation}
\| \partial_x (u_{k_1} \bu_{k_2}(\cdot +x_0))\|_{L^2_{t,x}}
\lesssim \epsilon^2 \la k_1-k_2 \ra^\frac12 c_{k_1}c_{k_2}.
\end{equation}

\end{corollary}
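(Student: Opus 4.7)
The plan is to derive both bounds from the $X_c$-norm control $\|u\|_{X_c} \lesssim \epsilon$ supplied by Theorem~\ref{t:local-fe}, combined with two packet-level ingredients: Bernstein's inequality for each unit-frequency piece $u_k$ (already recorded in Remark~\ref{r:X}) and the bilinear tube estimate \eqref{separate-bi} established in the preceding proof.

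For the $L^6$ Strichartz bound I would work tube by tube in the decomposition defining $\|u_k\|_{X_k}$. On each tube $T_j = \{\chi_j(t,x-2tk) \neq 0\}$ (of unit width in $x$, slope $2k$, time length $1$), Bernstein applied to the unit-frequency $u_k$ gives $\|\chi_j u_k\|_{L^\infty_{t,x}} \lesssim \|\chi_j u_k\|_{L^\infty_t L^2_x}$, while time integration over the unit interval gives $\|\chi_j u_k\|_{L^2_{t,x}} \leq \|\chi_j u_k\|_{L^\infty_t L^2_x}$. Interpolating via $L^6 = (L^\infty)^{2/3}(L^2)^{1/3}$, the same $L^\infty_t L^2_x$ bound controls $\|\chi_j u_k\|_{L^6_{t,x}}$. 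Finally the embedding $\ell^6 \hookrightarrow \ell^2$ yields
\[
\|u_k\|_{L^6_{t,x}}^6 \lesssim \sum_j \|\chi_j u_k\|_{L^6_{t,x}}^6 \lesssim \sum_j \|\chi_j u_k\|_{L^\infty_t L^2_x}^6 \leq \Bigl(\sum_j \|\chi_j u_k\|_{L^\infty_t L^2_x}^2\Bigr)^3 = \|u_k\|_{X_k}^6 \lesssim (\epsilon c_k)^6.
\]

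For the bilinear bound my plan is to split the task into two steps: (a) bound $\|u_{k_1}\bu_{k_2}(\cdot + x_0)\|_{L^2_{t,x}}$ using \eqref{separate-bi}, and (b) account for the $\partial_x$. For (a) I would identify the factor $\bu_{k_2}(\cdot + x_0)$ with an element of $X_{k_2}$ of comparable norm. The key observation is that the $X_{k_2}$-norm depends only on the pointwise modulus of the function on the tubes, and this modulus is invariant under both complex conjugation and spatial translation; spatial translation merely relabels the partition of unity $\{\chi_j\}$. Hence $\|\bu_{k_2}(\cdot + x_0)\|_{X_{k_2}} \approx \|u_{k_2}\|_{X_{k_2}} \lesssim \epsilon c_{k_2}$, so \eqref{separate-bi} gives $\|u_{k_1} \bu_{k_2}(\cdot + x_0)\|_{L^2_{t,x}} \lesssim \la k_1 - k_2\ra^{-1/2}\epsilon^2 c_{k_1} c_{k_2}$. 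For (b), since $u_{k_1}$ has Fourier support near $k_1$ and $\bu_{k_2}(\cdot + x_0)$ near $-k_2$, the product is Fourier-supported on a unit-scale neighborhood of $\xi = k_1 - k_2$; thus $\partial_x$ acts on this product with $L^2_x$ operator norm $\lesssim \la k_1 - k_2\ra$, and composing with (a) produces the claimed $\la k_1 - k_2\ra^{1/2}$ factor.

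The main conceptual point worth checking is the claim in (a) that the conjugated, translated factor $\bu_{k_2}(\cdot + x_0)$ genuinely sits inside $X_{k_2}$ with the correct norm: the ``group velocity'' $2k_2$ encoded in the tube decomposition should be understood as reflecting the physical-space localization of $|u_{k_2}|$, which coincides with $|\bu_{k_2}|$ and is merely shifted by $x_0$ under $\cdot \mapsto \cdot + x_0$. Once this compatibility is recorded, everything else reduces to routine H\"older/Bernstein bookkeeping.
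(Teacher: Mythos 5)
Your argument is correct and it supplies exactly the details the paper leaves implicit: the paper states only that the corollary follows from the $X_c$ frequency-envelope bound of Theorem~\ref{t:local-fe} together with the bilinear estimate \eqref{separate-bi}, and you derive both conclusions from precisely those ingredients. For the $L^6$ bound your tube-by-tube interpolation is fine; a marginally leaner route to the same thing is to note globally that $\|u_k\|_{L^\infty_{t,x}}\lesssim\|u_k\|_{X_k}$ (Bernstein plus taking the largest tube) and $\|u_k\|_{L^2_{t,x}}\lesssim\|u_k\|_{X_k}$ (unit time interval), then interpolate once. For the bilinear bound, your two observations are the right ones and worth stating explicitly as you did: the $X_{k_2}$ norm only sees the pointwise modulus, which is invariant under conjugation and translation (so $\bu_{k_2}(\cdot+x_0)$ inherits the same $X_{k_2}$ bound and \eqref{separate-bi} applies verbatim), and the product $u_{k_1}\bu_{k_2}(\cdot+x_0)$ has Fourier support in an $O(1)$ neighborhood of $k_1-k_2$, so $\partial_x$ costs exactly $\la k_1-k_2\ra$. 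Combining with the $\la k_1-k_2\ra^{-1/2}$ gain from \eqref{separate-bi} yields the stated $\la k_1-k_2\ra^{1/2}$.
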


\section{Energy estimates and conservation laws}
\label{s:energy}

\subsection{Conservation laws for the linear problem}
We begin our discussion with the linear Schr\"odinger equation
\begin{equation}
i u_t + u_{xx} = 0, \qquad u(0) = \du_0.    
\end{equation}
For this we consider the following three conserved quantities, the mass
\[
\bM(u) = \int |u|^2 \,dx,
\]
the momentum 
\[
\bP(u) = 2 \int \Im (\bar u \partial_x u) \,dx,
\]
as well as the energy
\[
\bE(u) = 4 \int |\partial_x u|^2\, dx. 
\]

To these quantities we associate corresponding densities
\[
M(u) = |u|^2, \qquad P(u) = i ( \bar u \partial_x u - u \partial_x \bar u), \qquad E(u) = - \bar u \partial_x^2 u   + 
2  |\partial_x u|^2 -  u \partial_x^2 \bar u.
\]
The choice of densities here is not entirely straightforward. Symmetry is clearly a criteria, but further motivation is provided by the conservation law computation,
\begin{equation}\label{df-lin}
\partial_t M(u) = \partial_x P(u), \qquad \partial_t P(u) = \partial_x E(u).
\end{equation}
The symbols of these densities viewed as bilinear forms are
\[
m(\xi,\eta) = 1, \qquad p(\xi,\eta) = -(\xi+\eta), \qquad e(\xi,\eta) = (\xi+\eta)^2.
\]

More generally, we can start start with a symbol $a(\xi,\eta)$ which is symmetric, in the sense that
\[
a(\eta,\xi) = \ol{a(\xi,\eta)},
\]
and then define an associated weighted mass density by
\[
M_a(u) = A(u,\bar u).
\]
We also define corresponding momentum and energy symbols $p_a$ and $e_a$ by 
\[
p_a(\xi,\eta) = -(\xi+\eta)a(\xi,\eta), \qquad e_a(\xi,\eta) = (\xi+\eta)^2a(\xi,\eta).
\]
Then a direct computation yields the density flux relations
\[
\frac{d}{dt} M_a(u,\bar u) = \partial_x P_a(u,\bar u), \qquad \frac{d}{dt} P_a(u,\bar u) = \partial_x E_a(u,\bar u).
\]

\subsection{Nonlinear density flux identities for the mass and momentum} 
Here we develop the counterpart of the linear analysis above for the nonlinear problem \eqref{nls}.

\subsubsection{The modified mass} 
To motivate what follows, we begin with a simpler computation for the $L^2$ norm of a solution $u$ of \eqref{nls}: 
\[
\frac{d}{dt} \| u\|_{L^2}^2 = \int - i C(u,\bar u,u) \cdot \bar u  + i u \cdot \ol{C(u,\bar u,u)} \, dx
:=   \int C^4_m(u,\bar u, u, \bar u) \, dx.
\]
A-priori the symbol of the quartic form $C^4_m$, defined on the diagonal $\Delta^4 \xi = 0$,
is given by
\[
c^4_m(\xi_1,\xi_2,\xi_3,\xi_4) = - i c(\xi_1,\xi_2,\xi_3) + i \bar c(\xi_2,\xi_3,\xi_4).
\]
However, we can further symmetrize and replace it by 
\[
c^4_m(\xi_1,\xi_2,\xi_3,\xi_4) = \frac{i}2 \left( -  c(\xi_1,\xi_2,\xi_3) - c(\xi_1,\xi_4,\xi_3)+ \bar c(\xi_2,\xi_3,\xi_4)+ 
\bar c(\xi_2,\xi_1,\xi_4) \right).
\]
In particular we are interested in the behavior of $c^4_m(\xi_1,\xi_2,\xi_3,\xi_4)$ on the resonant set
\[
\calR = \{(\xi_1, \xi_2, \xi_3, \xi_4)\in \mathbb{R}^4 \, / \,\Delta^4 \xi = 0, \,  \Delta^4 \xi^2 = 0\} = \{ \{ \xi_1,\xi_3\} = \{\xi_2,\xi_4\} \}.
\]
On this set we compute
\[
\begin{split}
c^4_m(\xi_1,\xi_1,\xi_3,\xi_3) = & \ \frac{i}2 \left( -  c(\xi_1,\xi_1,\xi_3) - c(\xi_1,\xi_3,\xi_3)+ \bar c(\xi_1,\xi_3,\xi_3)+ 
\bar c(\xi_1,\xi_1,\xi_3) \right)
\\
= & \ \Im( c(\xi_1,\xi_1,\xi_3) + c(\xi_1,\xi_3,\xi_3)).
\end{split}
\]
Then we observe that our (H2) assumption on $C$ shows that this expression vanishes. One might wonder here if we could not weaken this assumption by
requiring that the sum of the two terms is zero, rather than each of
them separately.  This would indeed be the case if all we were interested in 
is the almost conservation of mass. However, we will later add localization weights which
will act differently on the two terms.

The fact that $c^4_m$ vanishes on the resonant set $\calR$ implies (see Lemma~\ref{l:division} below) that we can smoothly divide 
\[
b^4_m(\xi_1,\xi_2,\xi_3,\xi_4) = - \frac{i c^4_m(\xi_1,\xi_2,\xi_3,\xi_4)}{\Delta^4 \xi^2}
\]
on $\Delta^4 \xi = 0$.
We now use $\bB^4_m$ as an energy correction. Then we obtain the modified
energy relation
\begin{equation}\label{modified-mass}
\frac{d}{dt} (\| u\|_{L^2}^2+ \bB^4_m(u,\bar u, u,\bar u) )  = \bR^6_m(u,\bar u,  u,\bar u, u,\bar u) ,
\end{equation}
where $\bR^6_m$ is a symmetric $6$-linear form. 
Here the left hand side may be viewed as a modified
energy, while the right hand side
can potentially be estimated using the $L^6_{t,x}$ norm of $u$.

\subsubsection{ The modified mass and momentum density-flux pairs}

The key idea here is that, corresponding to the above modified mass, we also want to write a conservation law for an associated mass density
\begin{equation}\label{m-sharp}
\ms(u) = M(u) + B^4_m(u,\bar u, u,\bar u).
\end{equation}
However, when doing this, we remark that the symbol of $B^4_m$ was previously 
defined only on the diagonal $\Delta^4 \xi = 0$, whereas in order for the above expression to be well defined we need to extend it everywhere. For the purpose of this computation we simply assume that we have chosen some smooth extension.
A more careful choice will be considered later in Lemma~\ref{l:division}.

Now we compute 
\[
\partial_t \ms(u) = \partial_x P(u) + C^4_m(u,\bar u, u,\bar u) + i ( \Delta^4 \xi^2 B^4_m)(u,\bar u, u,\bar u) + 
R^6_m(u,\bar u,  u,\bar u, u,\bar u) .
\]
By the choice of $ B^4_m$, the symbol of the quartic term above $c^4_m +  i  \Delta^4 \xi^2 b^4_m$ vanishes 
on the diagonal $\{\Delta^4 \xi = 0\}$, therefore we can express it smoothly in the form
\begin{equation}\label{choose-R4m}
c^4_m +  i  \Delta^4 \xi^2 \,  b^4_m = i \Delta^4 \xi\, r^4_m .
\end{equation}
Hence the above relation can be written in the better form
\begin{equation}\label{dens-flux-m}
\partial_t \ms(u) = \partial_x (P(u) + R^4_m(u,\bar u, u,\bar u)) +
R^6_m(u,\bar u,  u,\bar u, u,\bar u) .
\end{equation}

One may view here the relation \eqref{choose-R4m} as a division problem,
where $c^4_m$ vanishes on the resonant set $\calR$. The symbols $b^4_m$ and $r^4_m$ are not uniquely determined by the relation \eqref{choose-R4m}, as we can change them by 
\[
b_m^4 \to b_m^4 + q \Delta^4 \xi, \qquad r_m^4 \to r_m^4 + q \Delta^4 \xi^2,
\]
for any smooth $q$. However, this is the only 
ambiguity. In particular $r_m^4$ is uniquely determined on the 
set $\Delta^4 \xi^2 = 0$, while $b_m^4$ is uniquely determined 
on the set $\Delta^4 \xi = 0$. 

\bigskip

One could carry out a similar computation for the momentum,
where the starting point is the relation
\[
\partial_t P(u) = \partial_x E(u) + C^4_p(u,\bar u, u, \bar u).
\]
Precisely, the symbol of $C^4_p$ is initially given by
\[
c^4_p(\xi_1,\xi_2,\xi_3,\xi_4) =  i (\xi_1-\xi_2+\xi_3+\xi_4) c(\xi_1,\xi_2,\xi_3) - i 
(\xi_1+\xi_2-\xi_3+\xi_4)\bar c(\xi_2,\xi_3,\xi_4).
\]
However, we can further symmetrize it exactly as in the case of  $C_m^4$. 
Then it also vanishes on the resonant set $\calR$, so it admits a 
(nonunique) representation of the form
\begin{equation}\label{choose-R4p}
c^4_p +  i  \Delta^4 \xi^2 b^4_p = i \Delta^4 \xi r^4_p. 
\end{equation}

Hence, as in the case of the mass, we define a quartic correction for the momentum density
\[
\ps(u) = P(u) + B^4_p(u,\bar u, u,\bar u).
\]
This satisfies a conservation law of the form
\begin{equation}\label{dens-flux-p}
\partial_t \ps(u) = \partial_x (E(u) + R^4_p(u,\bar u, u,\bar u)) +
R^6_p(u,\bar u,  u,\bar u, u,\bar u) .
\end{equation}

\subsection{ The choice for the density-flux corrections}

Here we consider the division problem  in \eqref{choose-R4m}, and ask
what should be a good balance between the symbols $B_m^4$ and $R_m^4$.
We recall that $b_m^4$ is uniquely determined on the diagonal $\Delta^4 \xi = 0$,
but we can choose it freely away from the diagonal.

To move away from the diagonal, it is useful to do it in a Galilean invariant fashion. The expression $\Delta^4 \xi^2$
is not Galilean invariant, but we do have a suitable replacement,
namely the expression
\[
\tDelta^4 \xi^2 := \Delta^4 \xi^2 - 2 \xi_{avg} \Delta^4 \xi 
= \frac12 ((\xi_1-\xi_3)^2 - (\xi_2-\xi_4)^2).
\]
This is easily seen to be invariant with respect to translations.
To measure the size of both $\Delta^4 \xi$ and $\tDelta^4 \xi^2$ we introduce two parameters,
\begin{equation}\label{xihm}
\begin{aligned}
\xih = & \ \max  \{|\xi_1 - \xi_{2}|+|\xi_3-\xi_4|, |\xi_1 - \xi_{4}|+|\xi_3-\xi_2| \},
\\
\xim =& \  \min  \{|\xi_1 - \xi_{2}|+|\xi_3-\xi_4|, |\xi_1 - \xi_{4}|+|\xi_3-\xi_2| \},
\end{aligned}
\end{equation}
where $\xih$ measures the diameter of the full set of $\xi$'s whereas
$\xim$ measures the distance of the sets $\{\xi_1,\xi_3\}$ and $\{\xi_2,\xi_4\}$.
With these notations, we have bounds from above as follows:
\begin{equation}\label{d4-xi-above}
|\Delta^4 \xi| \lesssim \xim, \qquad   |\tDelta^4 \xi^2| \lesssim \xih \xim.
\end{equation}
We will think of the symbol $\Delta^4 \xi$ as being elliptic where approximate equality holds in the first relation,  and of $\tDelta^4 \xi^2$ as being elliptic  where approximate equality holds in the second relation. Based on this, we will decompose the phase space into three overlapping regions 
which can be separated using cutoff functions which are smooth on the unit scale:
\begin{enumerate}[label=\roman*)]
\item The full division region,
\[
\Omega_1 = \{ \xim \lesssim 1 \},
\]
which represents a full unit size neighbourhood of the resonant set $\calR$.
\item The region 
\[
\Omega_2  = \{ 1+ |\Delta^4 \xi| \ll \xim \}, 
\]
where $\tDelta^4 \xi^2$  must be elliptic, $|\tDelta^4 \xi^2| \approx \xih \xim$,
and thus we will favor division by the symbol $\tDelta^4 \xi^2$.

\item The region
\[
\Omega_3 =  \{ 1 \ll \xim \lesssim |\Delta^4 \xi|  \},
\]
we will instead divide by $\Delta^4 \xi$; this 
is compensated by the relatively small size of this region.
\end{enumerate}

This decomposition leads us to the following division lemma:

\begin{lemma}\label{l:division}
Let $c^4$ be a bounded symbol which is smooth on the unit scale,
and which vanishes on $\calR$. Then it admits a representation
\begin{equation}
c^4 =  \Delta^4 \xi\, \tilde r^4 - \tDelta^4 \xi^2 \, b^4,      
\end{equation}
where $\tilde r^4$ and $b^4$ are also smooth on the unit scale, with 
the following properties:

\begin{enumerate}[label = \roman*)]
    \item Size 
\begin{equation} \label{rb4}
\begin{aligned}
|\partial^\alpha \tilde r^4|  \lesssim & \  \frac{1}{\la \xim \ra} , 
\\
|\partial^\alpha b^4|  \lesssim & \  \frac{1}{\la\xih\ra \la\xim\ra}.
\end{aligned}   
\end{equation}    
\item Support:
$b^4$ is supported in $\Omega_1 \cup \Omega_2$ and $\tilde r^4$ is supported in $\Omega_1 \cup \Omega_3$.
\end{enumerate}

\end{lemma}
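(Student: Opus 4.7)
The plan is a partition-of-unity argument adapted to the three-region decomposition $\Omega_1 \cup \Omega_2 \cup \Omega_3$, supplemented by a Hadamard-type division in the resonant region $\Omega_1$. I would fix a smooth partition of unity $1 = \chi_1 + \chi_2 + \chi_3$ subordinate to this cover, with each $\chi_j$ smooth on the unit scale; this is possible since the defining inequalities for the $\Omega_j$ involve only unit-scale quantities in $\Delta^4 \xi$ and $\xim$.

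On $\Omega_3$, where $|\Delta^4 \xi| \gtrsim \xim \gtrsim 1$, simply set $\tilde r_3 := \chi_3 c^4 / \Delta^4 \xi$ and $b_3 := 0$; the ellipticity of $\Delta^4 \xi$ together with the unit-scale smoothness of $c^4$ yield $|\partial^\alpha \tilde r_3| \lesssim 1/\la \xim \ra$. Symmetrically, on $\Omega_2$, where $|\tDelta^4 \xi^2| \approx \xih\xim \gtrsim 1$, set $\tilde r_2 := 0$ and $b_2 := -\chi_2 c^4 / \tDelta^4 \xi^2$, which obeys $|\partial^\alpha b_2| \lesssim 1/(\la \xih \ra \la \xim \ra)$. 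The prescribed support conditions of the lemma are already visible from these definitions.

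The substantive work is on $\Omega_1$, where $\xim \lesssim 1$. Here I would introduce a linear change of variables on $\R^4$ with $s := \Delta^4 \xi$, $u := \xi_1 - \xi_3$, $v := \xi_2 - \xi_4$, plus one auxiliary sum coordinate to complete an invertible change, so that $\tDelta^4 \xi^2 = \frac{1}{2}(u^2 - v^2)$ and $\calR$ becomes $\{s = 0,\, u^2 = v^2\}$. Applying Hadamard's lemma first in $s$ writes $c^4 = c^4|_{s=0} + s\, g_0$ with $g_0$ smooth on the unit scale. The restricted symbol $c^4|_{s=0}$ vanishes along both $\{u = v\}$ and $\{u = -v\}$, so two further Hadamard divisions produce $c^4|_{s=0} = (u^2 - v^2)\, h_0$ with $h_0$ smooth. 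Setting $\tilde r_1 := \chi_1 g_0$, $b_1 := -2 \chi_1 h_0$, and finally gluing $\tilde r^4 := \tilde r_1 + \tilde r_3$, $b^4 := b_1 + b_2$, yields the required identity globally with the claimed supports.

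The main obstacle is the decay bound $|b_1| \lesssim 1/\la \xih \ra$ in the part of $\Omega_1$ where $\xih$ is large. In that regime $|s| \lesssim 1$ and exactly one of $|u-v|,\,|u+v|$ is $\lesssim 1$ while the other is of size $\approx \xih$. I would split $\Omega_1$ into the two corresponding subregions via a further unit-scale partition. In each one, performing the Hadamard divisions in the correct order, first by the factor that vanishes nearby and then by the larger one, identifies $h_0$ via the fundamental theorem of calculus with $\tilde f(u,v)/(u \pm v)$, where $\tilde f$ is the first Hadamard quotient and is uniformly bounded by the unit-scale smoothness of $c^4$; division by the large factor then produces the desired gain $1/\la \xih \ra$. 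Derivative bounds propagate from the same computation applied to $\partial^\alpha c^4$, since each derivative introduces only additional factors controlled by unit-scale smoothness of $c^4$.
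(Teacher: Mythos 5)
Your proposal is correct and takes essentially the same route as the paper: a unit-scale partition of unity subordinate to $\Omega_1\cup\Omega_2\cup\Omega_3$, direct elliptic division on $\Omega_2$ and $\Omega_3$, and on $\Omega_1$ a linear change of variables that diagonalizes $\tDelta^4\xi^2$ followed by Hadamard-type divisions, with a further unit-scale subdivision of $\Omega_1$ to capture the $\la\xih\ra^{-1}$ gain. The paper works in the coordinates $\eta_2=u+v$, $\eta_3=u-v$ (so $\tDelta^4\xi^2\propto\eta_2\eta_3$) and splits $\Omega_1$ into three pieces $\Omega_{11},\Omega_{12},\Omega_{13}$ according to which of $\eta_1,\eta_2,\eta_3$ are small, but this is a rotation of your $(u,v)$-picture and the arguments coincide.
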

Here and later in the paper by ``smooth on the unit scale" we mean that the above functions and all their derivatives are bounded, with bounds as in \eqref{rb4}, and where the implicit constant is allowed to depend on $\alpha$, but not on anything else. As usual, only finitely many derivatives are needed on our analysis, but we do  not take the extra step of determining how many.

\medskip

To return to $\Delta^4\xi$, we have the following straightforward 
observation:

\begin{remark}\label{r:division}
Later we will need similar decompositions but 
with $\tDelta^4 \xi^2$ replaced by $\Delta^4 \xi^2$,
\[
c^4 =  \Delta^4 \xi r^4 - \Delta^4 \xi^2 b^4    .
\]
This is easily done via the substitution
\[
r^4 = \tilde r^4 + 2 \xi_{avg} b^4.
\]
But in doing this, we loose the above bound for $\tilde r^4$ unless $|\xi_{avg}| \lesssim \xih$. Precisely,
we obtain instead
\begin{equation}\label{r4-bd}
  |\partial^\alpha r^4|  \lesssim  \frac{1}{\la \xim \ra}
  \left( 1+ \frac{|\xi_{avg}|}{\la \xih \ra}\right).
\end{equation}
\end{remark}

\begin{proof}
Using a partition of unity which is smooth on the unit scale, we can reduce the problem to the case 
when $c^4$ is supported in exactly one of the regions $\Omega_1$, $\Omega_2$ and $\Omega_3$. 
We consider each of these cases separately.
\medskip

\emph{i) $c^4$ is supported in $\Omega_1$.} To simplify notations here 
we introduce new linear coordinates $(\eta_1,\eta_2,\eta_3,\eta_4)$ where
\[
\eta_1 = \Delta^4 \xi, \quad \eta_2 = \xi_1+\xi_2-\xi_3-\xi_4, \quad \eta_3 = \xi_1-\xi_2-\xi_3+\xi_4, \qquad 2 \eta_2 \eta_3 = \tDelta^4 \xi^2.
\]
For $\eta_4$ we can choose in a symmetric fashion
\[
\eta_4 = \xi_1+\xi_2+\xi_3+\xi_4,
\]
though this does not play any role in the sequel.

In these coordinates we have 
\[
\Omega_1 = \left\{ |\eta_1|+ \min\{|\eta_2|, |\eta_3|\} \lesssim 1\right\} = \Omega_{11} \cup \Omega_{12} \cup \Omega_{13},
\]
where 
\[
\begin{aligned}
& \Omega_{11} = \left\{|\eta_1| + |\eta_2|+ |\eta_3| \lesssim 1\right\},  \qquad \Omega_{12} = \left\{ |\eta_1|+ |\eta_2| \lesssim 1 \lesssim |\eta_3| \right\},
 \\ & \qquad \qquad \Omega_{13} = \left\{|\eta_1|+ |\eta_3| \lesssim 1 \lesssim |\eta_2| \right\}.
 \end{aligned}
\]
Using another partition of unity which is smooth on the unit scale, the problem reduces to separately considering the case when $c^4$ is supported in each 
of these three sets.

Within the set $\Omega_{12}$ we have $c^4(0,0,\eta_3,\eta_4) = 0$ therefore we can easily 
represent 
\[
c^4(\eta_1,\eta_2,\eta_3,\eta_4) = (c^4(\eta_1,\eta_2,\eta_3,\eta_4)- 
c^4(0,\eta_2,\eta_3,\eta_4)) +
(c^4(0,\eta_2,\eta_3,\eta_4) - 
c^4(0,0,\eta_3,\eta_4)),
\]
where the first difference may be smoothly divided 
by $\eta_1$ and the second by $\eta_2$, with the quotients contributing to $\tilde r^4$, respectively $b^4$.
The set $\Omega_{13}$ can be dealt with in a similar fashion.

It remains to consider $\Omega_{11}$, where 
we know that $c^4 = 0$ in $\eta_1 = \eta_2 \eta_3 = 0$. Here we write
\[
c^4(\eta_1,\eta_2,\eta_3,\eta_4) = (c^4(\eta_1,\eta_2,\eta_3,\eta_4)- 
c^4(0,\eta_2,\eta_3,\eta_4)) +
c^4(0,\eta_2,\eta_3,\eta_4).
\]
Now the first difference can be smoothly divided by $\eta_1$, while the last term can be successively and smoothly divided by $\eta_2$ and $\eta_3$.

\medskip

\emph{ii) $c^4$ is supported in $\Omega_2$.} Here we set 
\[
b^4 = \frac{c^4}{\tDelta^4 \xi^2}, \qquad \tilde r^4 = 0,
\]
and we observe that 
\[
\left|\partial^\alpha\frac{1}{\tDelta^4 \xi^2}\right| \lesssim \frac{1}{\xim \xih}.
\]

\medskip

\emph{iii) $c^4$ is supported in $\Omega_3$.}
Here we set 
\[
b^4 = 0, \qquad \tilde r^4 = -\frac{c^4}{\Delta^4 \xi},
\]
and we observe that 
\[
\left|\partial^\alpha\frac{1}{\Delta^4 \xi}\right| \lesssim \frac{1}{\xim}.
\]

\end{proof}

\subsection{The Galilean invariance}
While the assumptions (H1-3) on the cubic nonlinearity $C$ are Galilean invariant, our density-flux identities are not. In order to rectify that, suppose heuristically that we are looking at linear waves concentrated around a frequency $\xi_0$. This corresponds to a linear group velocity of $2 \xi_0$, so in the density-flux identities
it would be natural to replace the operator $\partial_t$ by 
$\partial_t + 2 \xi_0 \partial_x$. At the linear level,
this is done by recentering the energy and momentum densities
at $\xi_0$, 
\begin{equation}
\begin{aligned}
p_{\xi_0}(\xi_1,\xi_2) = & \ - \xi_1 -\xi_2 + 2 \xi_0 = p + 2 \xi_0 m,
\\
e_{\xi_0}(\xi_1,\xi_2) = & \ (\xi_1+\xi_2 - 2 \xi_0)^2 = 
e + 4 \xi_0 p + 4 \xi_0^2 m.
\end{aligned}
\end{equation}
Then the density-flux identities \eqref{df-lin} become
\begin{equation}\label{df-lin-G}
(\partial_t  + 2 \xi_0 \partial_x) M(u) = \partial_x P_{\xi_0}(u), \qquad (\partial_t  + 2 \xi_0 \partial_x) P_{\xi_0}(u) = \partial_x E_{\xi_0}(u).
\end{equation}

Next we consider the nonlinear setting.
There $\ms$ is the same as before, but $\ps_{\xi_0}$ is 
\[
\ps_{\xi_0} = \ps - 2 \xi_0 \ms = P_{\xi_0} + B^4_{p,\xi_0} ,
\]
where the symbol for $B^4_{p,\xi_0}$ is given by 
\begin{equation}
 b^4_{p,\xi_0} =  b^4_{p} + 2\xi_0 b^4_{m}.
\end{equation}

Then our density-flux identities have the form
\begin{equation}\label{dens-flux-m-G}
(\partial_t  + 2 \xi_0 \partial_x) \ms(u) = \partial_x (p_{\xi_0}(u) + R^4_{m,\xi_0}(u,\bar u, u,\bar u)) +
R^6_{m,\xi_0}(u,\bar u,  u,\bar u, u,\bar u) ,
\end{equation}
\begin{equation}\label{dens-flux-p-G}
 (\partial_t  + 2 \xi_0 \partial_x)\ps_{\xi_0}(u) = \partial_x (e_{\xi_0}(u) + R^4_{p,\xi_0}(u,\bar u, u,\bar u)) +
R^6_{p,\xi_0}(u,\bar u,  u,\bar u, u,\bar u) ,
\end{equation}
where the symbols for $R^4_{m,\xi_0}$ and $R^4_{p,\xi_0}$ are defined by 
\begin{equation*}
   r^4_{m,\xi_0} =  r^4_{m} + 2 \xi_0 b^4_m, 
 \qquad   r^4_{p,\xi_0} =  r^4_{p} + 2 \xi_0 b^4_p
 + 2 \xi_0 r^4_m + 4 \xi_0^2 b^4_m.
\end{equation*}

\subsection{Localized density-flux identities for mass and momentum}

In our analysis later on, we will not use density-flux pairs for global estimates, but instead we will use them only in a frequency localized setting. 

Here we begin our discussion with a symmetric bilinear symbol
$a(\xi,\eta)$. We are assuming it generates a real valued quadratic form $A(u,\bar u)$, i.e. that 
\[
a(\xi,\eta) = \bar a(\eta,\xi),
\]
and that its symbol is bounded and uniformly smooth.
Later we will use such symbols $a$ to localize our analysis
to intervals $I$ in frequency, either of unit size or larger.

Corresponding to such $a$ we define corresponding quadratic 
localized mass, momentum and energy by
\[
m_a(\xi,\eta) = a(\xi,\eta), \qquad p_a(\xi,\eta) = -(\xi+\eta)
a(\xi,\eta), \qquad e_a(\xi,\eta) = (\xi+\eta)^2
a(\xi,\eta),
\]
A direct computation yields the relation
\begin{equation}
\partial_t M_a(u) = P_a(u) +
C_{m,a}^4(u) ,   
\end{equation}
where the symbol $C_{m,a}^4$ is given by 
\[
\begin{aligned}
c_{m,a}^4(\xi_1,\xi_2,\xi_3,\xi_4) = -\frac{i}{2} 
[ & \ c(\xi_1,\xi_2,\xi_3) m_a(\xi_1-\xi_2 +\xi_3,\xi_4) 
+c(\xi_1,\xi_4,\xi_3) m_a(\xi_1-\xi_4 +\xi_3,\xi_2)
\\ & - \bar{c}(\xi_2,\xi_3,\xi_4) m_a(\xi_3, \xi_2-\xi_3+\xi_4)
- \bar{c}(\xi_2,\xi_1,\xi_4) m_a(\xi_3, \xi_2-\xi_1+\xi_4)].
\end{aligned}
\]
A similar identity applies in the case of the localized momentum,
where we simply replace the symbol $m_a$ by $p_a$.

As before, this symbol vanishes on the resonant set $\calR$, 
so we can represent it as in the division relation \eqref{choose-R4m}, 
\begin{equation}\label{choose-R4ma}
c^4_{m,a} +  i  \Delta^4 \xi^2 b^4_{m,a} = i \Delta^4 \xi r^4_{m,a} ,
\end{equation}
as well as
\begin{equation}\label{choose-R4pa}
c^4_{p,a} +  i  \Delta^4 \xi^2 b^4_{p,a} = i \Delta^4 \xi r^4_{p,a} .
\end{equation}
Then, defining $\ms_a$ and $\ps_a$ as before, 
\begin{equation}\label{ma-sharp}
\ms_a(u) = M_a(u) + B^4_{m,a}(u,\bar u, u,\bar u),
\end{equation}
\begin{equation}\label{pa-sharp}
\ps_a(u) = P_a(u) + B^4_{p,a}(u,\bar u, u,\bar u),
\end{equation}
we obtain density-flux identities akin to
\eqref{dens-flux-m},  namely 
\begin{equation}\label{dens-flux-ma}
 \partial_t \ms_a(u) = \partial_x(P_{a}(u)
+ R^4_{m,a}(u)) +  R^6_{m,a}(u),
\end{equation}
and 
\begin{equation}\label{dens-flux-pa}
 \partial_t \ps_a(u) = \partial_x(E_{a}(u)
+ R^4_{p,a}(u)) +  R^6_{p,a}(u).
\end{equation}

We will consider these relations together with their Galilean shifts
obtaining relations of the form 
\begin{equation}\label{dens-flux-maG}
(\partial_t + 2 \xi_0 \partial_x)\ms_a(u) = \partial_x(P_{a,\xi_0}(u)
+ R^4_{m,a,\xi_0}(u)) +  R^6_{m,a,\xi_0}(u),
\end{equation}
respectively 
\begin{equation}\label{dens-flux-paG}
(\partial_t + 2 \xi_0 \partial_x)\ps_{a,\xi_0} (u) = \partial_x(E_{a,\xi_0}(u)
+ R^4_{p,a,\xi_0}(u)) +  R^6_{p,a,\xi_0}(u).
\end{equation}

These correspond to the algebraic division relations
\begin{equation}\label{choose-R4m-a}
c^4_{m,a} +  i  \Delta^4 (\xi-\xi_0)^2 b^4_{m,a} = i \Delta^4 \xi r^4_{m,a,\xi_0} ,
\end{equation}
respectively
\begin{equation}\label{choose-R4p-a}
c^4_{p,a,\xi_0} +  i  \Delta^4 (\xi-\xi_0)^2 b^4_{p,a,\xi_0} = 
i \Delta^4\xi r^4_{p,a,\xi_0} ,
\end{equation}
where
\begin{equation}\label{pa-xi0}
\begin{aligned}
c_{p,a,\xi_0}^4(\xi_1,\xi_2,\xi_3,\xi_4) = & -\frac{i}{2} 
[   c(\xi_1,\xi_2,\xi_3) p_{a,\xi_0}(\xi_1-\xi_2 +\xi_3,\xi_4) 
+c(\xi_1,\xi_4,\xi_3) p_{a,\xi_0}(\xi_1-\xi_4 +\xi_3,\xi_2)
\\ & - \bar{c}(\xi_2,\xi_3,\xi_4) p_{a,\xi_0}(\xi_1, \xi_2-\xi_3+\xi_4)
- \bar{c}(\xi_2,\xi_1,\xi_4) p_{a,\xi_0}(\xi_3, \xi_2-\xi_1+\xi_4)].
\end{aligned}
\end{equation}

The symbols above are connected in the obvious way. Precisely,
we have 
\begin{equation}
  r^4_{m,a,\xi_0} =   r^4_{m,a} + 2\xi_0 b^4_{m,a},
 \qquad 
\end{equation}
and
\begin{equation}
\ps_{a,\xi_0} = \ps_a + 2\xi_0 \ms_a, \qquad b^4_{p,a,\xi_0}
= b^4_{p,a} + 2 \xi_0 b^4_{m,a}, 
\end{equation}
and finally 
\begin{equation}
r^4_{p,a,\xi_0} = r^4_{p,a} + 2\xi_0 b^4_{p,a} + 2 \xi_0 r^4_{m,a,\xi_0}.
\end{equation}

To use these density flux relations we need to have appropriate 
bounds for our symbols:

\begin{proposition} 
\label{p:symbols}
Let $J \subset \R$ be an interval of length $r$, and $d(\xi_0,J) \lesssim r$.  Assume that $a$ is supported in $J \times J$, with bounded and uniformly smooth symbol. Then the relations \eqref{choose-R4m-a}
and \eqref{choose-R4p-a} hold with symbols $b^{4}_{m,a}$, $b^{4}_{p,a,\xi_0}$, $r^4_{m,a,\xi_0}$ and $r^4_{p,a,\xi_0}$
which can be chosen to have the following properties:

\begin{enumerate}[label=\roman*)]
    \item Support: they are all supported in the region where 
    at least one of the frequencies is in $J$.
    \item Size: 
\begin{equation}
 |b^{4}_{m,a}| \lesssim \frac{1}{\la \xih \ra \la \xim \ra }, 
 \qquad
 |b^{4}_{p,a,\xi_0}| \lesssim \frac{r}{\la \xih \ra \la \xim \ra },
\end{equation}    
\begin{equation}
\begin{aligned}
 |r^{4}_{m,a,\xi_0}| \lesssim  & \ \frac{1}{\la \xim  \ra } 1_{\Omega_1 \cup \Omega_3}
 + \frac{r}{\la \xih \ra \la \xim \ra }
   1_{\Omega_1 \cup \Omega_2},
 \\
 |R^{4}_{p,a,\xi_0}| \lesssim & \ \frac{r}{\la \xim  \ra } 1_{\Omega_1 \cup \Omega_3}
 + \frac{r^2}{\la \xih \ra \la \xim \ra }
   1_{\Omega_1 \cup \Omega_2}.
 \end{aligned}  
\end{equation}     
  \item Regularity:  similar bounds hold for all derivatives.
\end{enumerate}
\end{proposition}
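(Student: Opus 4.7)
The plan is to reduce the proposition to a term-by-term application of Lemma~\ref{l:division} (and its Galilean-shifted variant from Remark~\ref{r:division}) applied to the four summands of $c^4_{m,a}$ and $c^4_{p,a,\xi_0}$, while carefully tracking the size of the localization factor $a$ and the shift between $\tDelta^4\xi^2$ and $\Delta^4(\xi-\xi_0)^2$.

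First I would settle the support claim (i). Each of the four summands in $c^4_{m,a}$ and $c^4_{p,a,\xi_0}$ carries a factor $a(\cdot,\cdot)$ evaluated at a specific pair of linear combinations of $\xi_1,\xi_2,\xi_3,\xi_4$; since $a$ is supported in $J\times J$, each summand is supported where both such arguments lie in $J$. Inspecting the four pairs, in every case at least one of $\xi_1,\xi_2,\xi_3,\xi_4$ is forced into $J$, and moreover $|\Delta^4\xi|\lesssim r$, because the difference of the two arguments of $a$ equals $\pm\Delta^4\xi$ in each case. These support properties propagate to the symbols produced by the division algorithm. Next, I would verify that $c^4_{m,a}$ and $c^4_{p,a,\xi_0}$ vanish on the resonant set $\calR$: this is the direct extension of the unlocalized calculation from Section~\ref{s:energy}, with the four terms pairing up on $\calR$ and their sum reducing to an imaginary part that vanishes by hypothesis (H2) combined with the hermitian symmetry $a(\eta,\xi)=\overline{a(\xi,\eta)}$.

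With vanishing established, Lemma~\ref{l:division} applied to $c^4_{m,a}$ produces $b^4_{m,a}$ satisfying the stated bound, together with a companion $\tilde r^4_{m,a}$ supported in $\Omega_1\cup\Omega_3$ and obeying $|\tilde r^4_{m,a}|\lesssim 1/\la\xim\ra$. To pass to the Galilean-shifted form \eqref{choose-R4m-a}, I use the identity $\Delta^4(\xi-\xi_0)^2-\tDelta^4\xi^2=2(\xi_{avg}-\xi_0)\Delta^4\xi$, so that $b^4_{m,a}$ is unchanged while $r^4_{m,a,\xi_0}=\tilde r^4_{m,a}+2(\xi_{avg}-\xi_0)b^4_{m,a}$. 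The extra factor of $r$ appearing in all momentum-based bounds is transparent: on $\mathrm{supp}\,a$ we have $|p_{a,\xi_0}(\xi_1,\xi_2)|=|\xi_1+\xi_2-2\xi_0|\,|a(\xi_1,\xi_2)|\lesssim r$, and this size propagates linearly through every division.

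The main obstacle is the careful accounting for the correction term $2(\xi_{avg}-\xi_0)b^4_{m,a}$, which is responsible for the hybrid form of the $r^4_{m,a,\xi_0}$ bound in the statement. The key observation is that the support restriction from (i) together with the diameter control by $\xih$ gives $|\xi_{avg}-\xi_0|\lesssim r+\xih$ on $\mathrm{supp}\,b^4_{m,a}$: one of $\xi_1,\xi_2,\xi_3,\xi_4$ is within $r$ of $\xi_0$, and the remaining three are within $O(\xih)$ of it. In $\Omega_3$, the support further yields $\xim\lesssim|\Delta^4\xi|\lesssim r$, so the piece of the correction coming from $\xih$ is bounded by $\xih/(\la\xih\ra\la\xim\ra)\lesssim 1/\la\xim\ra$, producing the $\Omega_3$ term in the statement. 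In $\Omega_2$, the analogous contribution must instead be absorbed by exploiting the nonuniqueness in the division: using the ambiguity $b^4\to b^4+q\Delta^4\xi$, $r^4\to r^4+q\Delta^4(\xi-\xi_0)^2$ with a suitable smooth $q$, one can transfer the unwanted piece from $r^4_{m,a,\xi_0}$ into $b^4_{m,a}$ at the price of only a bounded factor. The derivative bounds (iii) follow by differentiating through the division relations, since $a$ and the symbols produced by Lemma~\ref{l:division} are all smooth on the unit scale.
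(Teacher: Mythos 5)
The paper's own proof of Proposition~\ref{p:symbols} is a one-line citation of Lemma~\ref{l:division} and Remark~\ref{r:division}, so your proposal is a legitimate and essentially faithful fleshing-out of the intended argument. The main steps --- reading off the support of each of the four summands from the localization factor $a$ (which forces one frequency into $J$ and, via the observation that the two arguments of $a$ differ by $\pm\Delta^4\xi$, also forces $|\Delta^4\xi|\lesssim r$), the vanishing of $c^4_{m,a}$ on $\calR$ from (H2) and the hermitian symmetry of $a$, the Galilean-shift identity $\Delta^4(\xi-\xi_0)^2-\tDelta^4\xi^2 = 2(\xi_{avg}-\xi_0)\Delta^4\xi$ leading to $r^4_{m,a,\xi_0}=\tilde r^4_{m,a}+2(\xi_{avg}-\xi_0)b^4_{m,a}$, and the support estimate $|\xi_{avg}-\xi_0|\lesssim r+\xih$ --- are all correct and exactly what the citation of Lemma~\ref{l:division} and Remark~\ref{r:division} is meant to encode.

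Two points in your final paragraph should be fixed. First, the sentence about $\Omega_3$ is misplaced: $b^4_{m,a}$ is supported in $\Omega_1\cup\Omega_2$, so the correction $2(\xi_{avg}-\xi_0)b^4_{m,a}$ contributes nothing on $\Omega_3\setminus\Omega_1$; the $\Omega_1\cup\Omega_3$ piece of the stated bound is already accounted for by the Lemma's $\tilde r^4_{m,a}$, which is supported there and satisfies $|\tilde r^4_{m,a}|\lesssim\la\xim\ra^{-1}$ with no further work needed. Second, the absorption on $\Omega_2$ via the ambiguity $(b^4,r^4)\mapsto(b^4+q\Delta^4\xi,\,r^4+q\Delta^4(\xi-\xi_0)^2)$ is the right idea but needs a case split you did not make explicit. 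Choosing $q$ to cancel the excess $\approx\xih(\la\xih\ra\la\xim\ra)^{-1}$ requires $\Delta^4(\xi-\xi_0)^2$ to be elliptic of size $\xih\xim$ on $\Omega_2\cap\mathrm{supp}$. Writing $\Delta^4(\xi-\xi_0)^2=\tDelta^4\xi^2+2(\xi_{avg}-\xi_0)\Delta^4\xi$, the first term is $\approx\xih\xim$ on $\Omega_2$, while the second is $\ll(r+\xih)\xim$ there since $|\Delta^4\xi|\ll\xim$; ellipticity therefore holds only when $r\lesssim\xih$. Fortunately that is precisely the regime where the transfer is needed: when $r\gtrsim\xih$ one has $r(\la\xih\ra\la\xim\ra)^{-1}\gtrsim\la\xim\ra^{-1}$, so the original correction is already within the stated bound and no transfer is required. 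You should also note that the induced change $q\Delta^4\xi$ in $b^4_{m,a}$ stays $O((\la\xih\ra\la\xim\ra)^{-1})$, which follows from $|q|\lesssim(\xih\xim^2)^{-1}$ in the regime $r\lesssim\xih$ together with $|\Delta^4\xi|\lesssim\min(r,\xim)$ on $\Omega_2\cap\mathrm{supp}$.
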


\begin{proof}
 This is easily done by applying Lemma~\ref{l:division},
 see also Remark~\ref{r:division}.
\end{proof}


\section{Interaction Morawetz identities}
\label{s:Morawetz}

\subsection{The linear Schrodinger equation} 

The interaction Morawetz inequality aims to capture the fact that the 
momentum moves to the right faster than the mass. Here the left/right symmetry is broken
due to the sign choice which is implicit in the choice of the momentum.

\subsubsection{A global computation} To warm up,  we start with two solutions $u$ and $v$ for the linear Schr\"odinger equation. To these we associate
the interaction functional 
\[
\bI(u,v) = \int_{x > y} M(u)(x) P(v)(y) - P(u)(x) M(v)(y)  \,dx dy,
\]
and compute $d\bI/dt$ using the conservation laws \eqref{df-lin}. We have
\[
\begin{aligned}
\frac{d}{dt} \bI(u,v) =  & \ \int_{x > y} \partial_x P(u)(x)
P(v)(y) +  M(u)(x) \partial_y  E(v)(y)
\\ & \ \ \ \ 
-\partial_x E(u)(x)
M(v)(y) - P(u)(x) \partial_y  P(v)(y)\,
dx dy 
\\
= &  \int M(u) E(v) + M(v) E(u) 
- 2 P(u)P(v) \,dx:= 
\int J^4(u,\bu,v,\bv)\, dx
.\end{aligned}
\]
Here  $J^4$ can be chosen\footnote{ Recall that 
a-priori the symbol of $j^4$ is only determined uniquely on the diagonal $\Delta^4 \xi = 0$.}
to have  symbol
\[
j^4(\xi_1,\xi_2,\xi_3,\xi_4) =    4(\xi_1-\xi_4)(\xi_2-\xi_3).
\]
This is because of the following computation on the diagonal $\Delta^4 \xi = 0$:
\[
(\xi_1+\xi_2)^2 + (\xi_3+\xi_4)^2 - 2 (\xi_1+\xi_2)(\xi_3+\xi_4) = (\xi_1+\xi_2-\xi_3 - \xi_4)^2 
=  4 (\xi_1 - \xi_4)(\xi_2 - \xi_3).
\]
Thus we have  the positivity
\[
J^4(u,\bu,v,\bv) = 4 |\partial_x (u \bv)|^2.
\]
The above computation is classically done using integration by parts, see \cite{PV}. However, it is more interesting to do it at the symbol level because we want to apply it in a more general context. Classically this is done with $u=v$, but here 
we find it convenient to break the symmetry. Primarily, our $v$'s will be spatial translations of $u$.

\subsubsection{A frequency localized bound}
Here we start with a symbol $a$ which is localized 
on the unit scale near some frequency $\xi_0$, and
consider  the interaction Morawetz functional
\begin{equation}\label{Ia-def}
\bI_a(u,v) =  \int_{x > y} M_a(u)(x) P_a(v)(y)  -   P_a(u) (x) M_a(v)(y) \, dx dy.
\end{equation}

As above, its time derivative is 
\[
\frac{d}{dt} \bI_a(u,v) =  
\bJ^4_a(u,\bu,v,\bv),
\]
where $J_a^4$ has symbol
\[
j_a(\xi_1,\xi_2,\xi_3,\xi_4) =  4a(\xi_1,\xi_2) \ol{a(\xi_3,\xi_4)}  (\xi_1-\xi_4)(\xi_2-\xi_3).
\]
 This no longer has obvious positivity.  However, if $a$ has separated variables
\begin{equation}\label{choose-a-loc}
a(\xi,\eta) = a_0(\xi) a_0(\eta),
\end{equation}
then $J_a$ is nonnegative, 
\[
\bJ_a^4 (u,v) = 4 \int |K_a(u,\bv)|^2 \, dx,
\]
where $K_a$ has symbol $(\xi-\eta) a_0(\xi) a_0(\eta)$,
i.e.
\[
K_a(u,\bar v) = \partial_x (A_0 u \overline{A_0 v}),
\]
where $A_0$ is the multiplier associated to the symbol $a_0$. 

\subsubsection{ Interaction Morawetz for separated velocities} \label{s:AB}

Here we instead  take two symbols $a$ and $b$ localized to two frequency intervals $A$ and $B$
so that $|A|,|B| \lesssim r$ and $A$ and $B$ have separation $r$ (say $A$ is to the left of $B$). Then we 
take the interaction functional
\[
\bI_{AB} = \int_{x > y} M_A(u)(x) P_B(v)(y)    - P_A(u)(x) M_B(v)(y) \, dx dy ,
\]
or equivalently
\[
\bI_{AB} = \int_{x > y} M_A(x) P_{B,\xi_0} (y)    - P_{A,\xi_0}(x) M_B(y) \,dx dy ,
\]
where $\xi_0$ is arbitrary, but can be chosen more
efficiently at distance $O(r)$ from both $A$ and $B$.

Then we compute
\[
\frac{d}{dt} \bI_{AB} = \int M_A(u) E_B(v) + E_A(u) M_B(v)(x) - 2  P_B(u) P_A(u) \, dx :=\bJ^4_{AB}(u,\bu,v,\bv), 
\]
where $\bJ^4_{AB}$ has symbol
\[
j^4_{AB}(\xi_1,\xi_2,\xi_3,\xi_4) =  2a(\xi_1,\xi_2) b(\xi_3,\xi_4)  (\xi_1-\xi_4)(\xi_2-\xi_3).
\]
Assuming that 
\[
a(\xi,\eta) = a_0(\xi) a_0(\eta), \qquad b(\xi,\eta) = b_0(\xi) b_0(\eta),
\]
we can write $\bJ^4_{AB}$ as
\[
\bJ^4_{AB} = \int |K_{AB}(u,\bar v)|^2 \, dx,
\]
where 
\[
K_{AB}(u,\bar u) = \partial_x (A_0 u \, \overline{B_0 v}).
\]

Now the differences $(\xi_1-\xi_4)$ and $(\xi_2-\xi_3)$ have size $r$ so this leads to a bilinear $L^2$ bound
for $A_0 u \cdot \overline{B_0 u}$,
\[
\bJ_{AB} \approx r^2 \|A_0 u \cdot \overline{B_0 u}\|_{L^2}^2.
\]

\subsection{Nonlinear interaction Morawetz estimates}
Here we consider the same interaction Morawetz functional as above, but now apply it to (two) solutions for the nonlinear equation \eqref{nls}. 

\subsubsection{ A simple case }
As a starting point, here we consider density-flux pairs as in \eqref{dens-flux-m}, \eqref{dens-flux-p} to which we associate the nonlinear interaction functional 
\begin{equation}
\bI(u,v) = \iint_{x > y} \ms(u)(x) \ps(v)(y) - \ps(u)(x) \ms(v)(y) \, dx dy .    
\end{equation}
Using the density-flux relations we obtain
\begin{equation}
\frac{d\bI}{dt} = \bJ^4 + \bJ^6 + \bJ^8 + \bK^8,
\end{equation}
where $\bJ^4$ is the same as above, while $\bJ^6$ and $\bJ^8$ are given by
\begin{equation}
\begin{aligned}
\bJ^6(u,v) =  \int & \ M(u) R^4_{p}(v)+ B^4_m(u) E(v)
- P(u)B^4_p(v)- R^4_m(u)P(v) +
\\ & \ 
M(v) R^4_{p}(u)+ B^4_m(v) E(u)
- P(v)B^4_p(u)- R^4_m(v)P(u)\,  dx ,
\end{aligned}
\end{equation}
respectively 
\begin{equation}
\bJ^8(u,v) =  \int B^4_m(u) R^4_{p}(v) - R^4_{m}(u) B^4_p(v)
+ B^4_m(v) R^4_{p}(u) - R^4_{m}(v) B^4_p(u)
\,  dx .
\end{equation}
Finally, we are also left with the double integral
\begin{equation}\label{K8-def}
\begin{aligned}
\bK^8 &= \iint_{x > y} \ms(u)(x)  R^6_{p}(v)(y)  + \ps(v)(y) R^6_{m}(u)(x) \, dx dy \\ \quad &- \iint_{x > y}
\ms(v)(y)  R^6_{p}(u)(x)  + \ps(u)(x) R^6_{m}(v)(y) \,
dx dy,
\end{aligned}
\end{equation}
whose leading part has order $8$ but  also contains terms of order $10$;  but we will treat it all perturbatively later.

\bigskip

It is instructive to consider the case of the cubic defocusing NLS. There $B^4_m= 0$, $B^4_p=0$
and thus $R^6_m= 0$, $R^6_p=0$. Further, $R^4_{m} = 0$ but $R^4_p = 1$.
Thus in particular we get 
\[
\bJ^6(u,u) = \int |u|^6 \, dx.
\]
This is where the focusing/defocusing type of the equation comes in, as it determines the sign of $\bJ^6$ 
(relative to the sign of $\bJ^4$).

\subsubsection{Nonlinear interaction Morawetz: the localized diagonal case}

Here we use the frequency localized mass density-flux \eqref{dens-flux-m-G}  and the corresponding momentum density-flux \eqref{dens-flux-p-G}
in order to produce a localized interaction Morawetz estimate. We consider a smooth symbol $a$ as in \eqref{choose-a-loc}, where $a_0$ is  localized around a frequency $\xi_0$ on the unit scale.

Correspondingly, we have the localized mass and momentum densities
\[
\ms_a = M_a(u,\bar u) + B^4_{m,a}(u,\bar u, u,\bar u),
\]
\[
\ps_{a,\xi_0} = P_{a,\xi_0}(u,\bar u) + B^4_{p,a,\xi_0}(u,\bar u, u,\bar u),
\]
which satisfy the conservation laws
\[
(\partial_t + 2 \xi_0 \partial_x)\ms_a(u) = \partial_x(P_{a,\xi_0}(u)
+ R^4_{m,a,\xi_0}(u)) +  R^6_{m,a,\xi_0}(u).
\]
\[
(\partial_t + 2 \xi_0 \partial_x)\ps_{a,\xi_0}(u) = \partial_x(E_{a,\xi_0}(u)
+ R^4_{p,a,\xi_0}(u)) +  R^6_{p,a,\xi_0}(u).
\]
For these we define the interaction Morawetz functional 
\begin{equation}\label{Ia-sharp-def}
\bI_{a}(u,v) =   \iint_{x > y} \ms_a(u)(x) \ps_{a,\xi_0}(v) (y) -  
\ps_{a,\xi_0}(u)(x) \ms_{a}(v) (y) \, dx dy,
\end{equation}
where, by writing it in a symmetric fashion, we have completely
eliminated its dependence on $\xi_0$.

The time derivative of $\bI_{a}$ is
\begin{equation}\label{interaction-xi}
\frac{d}{dt} \bI_{a} =  \bJ^4_{a} + \bJ^6_{a} + \bJ^8_{a} + \bK^8_{a}, 
\end{equation}
where all the terms are  independent of $\xi_0$.

Here the quartic contribution $\bJ^4_{a}$ is 
the same as in the linear case,
\[
\bJ^4_{a}(u,v) = \int M_a(u) E_{a,\xi_0}(v) 
+ M_a(v) E_{a,\xi_0}(u)
- 2 P_{a,\xi_0}(u) P_{a,\xi_0}(v)\, dx.
\]
The sixth order term $\bJ^6_{a}$ has the form
\begin{equation}\label{J6-def}
\begin{aligned}
\bJ^6_a(u,v) =  \int & \ M_a(u) R^4_{p,a,\xi_0}(v)+ B^4_{m,a}(u) E_{a,\xi_0}(v)
- P_{a,\xi_0}(u)B^4_{p,a,\xi_0}(v)- R^4_{m,a,\xi_0}(u)P_{a,\xi_0}(v) 
\\ & \ 
+ M_a(v) R^4_{p,a,\xi_0}(u)+ B^4_{m,a}(v) E_{a,\xi_0}(u)
- P_{a,\xi_0}(v)B^4_{p,a,\xi_0}(u)- R^4_{m,a,\xi_0}(v)P_{a,\xi_0}(u)\,  dx .
\end{aligned}
\end{equation}
Next we have 
\begin{equation}\label{J8-def}
\bJ^8_{a}(u,v) =   \int 
B^4_{m,a}(u) R^4_{p,a,\xi_0}(v) - R^4_{m,a,\xi_0}(u) B^4_{p,a,\xi_0}(v)
+ B^4_{m,a}(v) R^4_{p,a,\xi_0}(u) - R^4_{m,a,\xi_0}(v) B^4_{p,a,\xi_0}(u)
 \, dx .
\end{equation}

Finally the $8$-linear term $\bK^8_{a}$ has the form
\begin{equation}\label{K8-def-ab}
\begin{aligned}
\bK^8_a(u,v) = \iint_{x > y} & \ \ms_a(u)(x)  R^6_{p,a,\xi_0}(v)(y)  + \ps_{a,\xi_0}(v)(y) R^6_{m,a}(u)(x) 
\\ & \ - 
\ms_a(v)(y)  R^6_{p,a,\xi_0}(u)(x)  - \ps_{a,\xi_0}(u)(x) R^6_{m,a}(v)(y) \,
dx dy.
\end{aligned}
\end{equation}
This also includes  a $10$-linear term.

Importantly, here we compute the symbol of $\bJ^6_{a,\xi_0}$ on the diagonal
$\xi_1 = \xi_2=\xi_3=\xi_4=\xi_5=\xi_6:=\xi$. This will be essential later on in order to obtain bounds for the $L^6$ Strichartz norm.

\begin{lemma}
The diagonal trace of the symbol $j^6_{a}$
is 
\begin{equation}\label{good-J6}
j^6_{a}(\xi) = a^2 (\xi) c (\xi, \xi, \xi) .
\end{equation}
\end{lemma}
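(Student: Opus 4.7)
The plan is to exploit the Galilean $\xi_0$-invariance of $\bJ^6_a$ observed after \eqref{interaction-xi} by adapting $\xi_0$ to the diagonal point: I choose $\xi_0 = \xi$. With this choice, $p_{a,\xi}(\xi,\xi) = -2(\xi-\xi)\,a(\xi,\xi) = 0$ and $e_{a,\xi}(\xi,\xi) = 0$. Scanning the four kinds of terms in \eqref{J6-def}, three of them ($B^4_{m,a}\,E_{a,\xi_0}$, $P_{a,\xi_0}\,B^4_{p,a,\xi_0}$, and $R^4_{m,a,\xi_0}\,P_{a,\xi_0}$) carry a factor of $P_{a,\xi_0}$ or $E_{a,\xi_0}$ and therefore vanish on the total diagonal at this choice of $\xi_0$. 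Only the $M_a\,R^4_{p,a,\xi}$ contribution survives, reducing the diagonal trace to (a normalization constant times) $a(\xi,\xi)\cdot r^4_{p,a,\xi}(\xi,\xi,\xi,\xi)$.

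Next I evaluate $r^4_{p,a,\xi}(\xi,\xi,\xi,\xi)$ via the division identity \eqref{choose-R4p-a}, which for $\xi_0=\xi$ reads
\[
c^4_{p,a,\xi} + i\,\Delta^4(\xi-\xi)^2\, b^4_{p,a,\xi} = i\,\Delta^4\xi\, r^4_{p,a,\xi}.
\]
The point of the choice $\xi_0=\xi$ is that the gradient of $\Delta^4(\xi-\xi_0)^2 = \sum_j (-1)^{j+1}(\xi_j-\xi_0)^2$ at the diagonal is proportional to $(\xi-\xi_0)$, and vanishes precisely when $\xi_0=\xi$. Consequently $\Delta^4(\xi-\xi)^2 = O(t^2)$ under any unit-speed linear perturbation of parameter $t$, so the first-order Taylor expansion of $c^4_{p,a,\xi}$ at the diagonal equals $i\,\Delta^4\xi \cdot r^4_{p,a,\xi}(\xi,\xi,\xi,\xi)$ modulo $O(t^2)$ errors, and the value of $r^4_{p,a,\xi}$ at the diagonal is just the linear Taylor coefficient of $c^4_{p,a,\xi}$ in any direction with $\Delta^4\xi \neq 0$.

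To read off this linear coefficient I use the concrete direction $\xi_1 = \xi+t$, $\xi_2=\xi_3=\xi_4=\xi$, for which $\Delta^4\xi = t$ and $\Delta^4(\xi-\xi)^2 = t^2$. Substituting into the symmetrized four-term formula \eqref{pa-xi0} for $c^4_{p,a,\xi}$ and expanding to first order in $t$, each of the four terms produces a factor of $\pm t$ from $p_{a,\xi}$ (which vanishes on the $\xi_0$-shifted diagonal), with the remaining $c$ or $\bar c$ factors evaluated at the diagonal triple. Using the (H2) identity $\bar c(\xi,\xi,\xi) = c(\xi,\xi,\xi)$ to convert conjugates and tracking signs, the four contributions reinforce to give
\[
c^4_{p,a,\xi}(\xi+t,\xi,\xi,\xi) = i\,t\,a(\xi,\xi)\,c(\xi,\xi,\xi) + O(t^2),
\]
so $r^4_{p,a,\xi}(\xi,\xi,\xi,\xi) = a(\xi,\xi)\,c(\xi,\xi,\xi)$. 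Substituting back into the formula from the first step produces the claimed identity $j^6_a(\xi) = a^2(\xi)\,c(\xi,\xi,\xi)$.

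The main obstacle lies in the sign bookkeeping of the last step. A priori the four terms in $c^4_{p,a,\xi}$ could add to anywhere between $\pm 4\,t\,a\,c$ or even cancel entirely; the fact that they combine to produce a nonzero real multiple of $c(\xi,\xi,\xi)$ relies crucially on the symmetrization convention for $c^4_{p,a,\xi}$ and on (H2), which kills the imaginary part of $c(\xi,\xi,\xi)$ that would otherwise appear as an obstruction. Once this is in place, the defocusing hypothesis (H3) pins down the sign of the diagonal trace, providing the positive $L^6$-type quantity needed to close the interaction Morawetz argument in the subsequent sections.
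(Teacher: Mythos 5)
Your proof is correct and follows essentially the same route as the paper: set $\xi_0=\xi$ so that $p_{a,\xi}$ and $e_{a,\xi}$ vanish at the diagonal, isolate the $M_a\,R^4_{p,a,\xi}$ contribution, and recover $r^4_{p,a,\xi}$ at the diagonal by Taylor-expanding the division identity \eqref{choose-R4p-a} to first order (the paper does this by applying $\partial_{\xi_1}$ directly, which is the same computation). One small inaccuracy in your narrative: when you substitute $\xi_1=\xi+t$, $\xi_2=\xi_3=\xi_4=\xi$ into \eqref{pa-xi0}, the two terms carrying $c$ reinforce (each giving $-t\,a(\xi,\xi)\,c(\xi,\xi,\xi)$), but the two terms carrying $\bar c$ cancel against one another, since $p_{a,\xi}(\xi+t,\xi)\approx -t\,a$ while $p_{a,\xi}(\xi,\xi-t)\approx +t\,a$. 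So ``the four contributions reinforce'' is not quite right --- two reinforce and two cancel --- and as a consequence (H2) is not actually needed to extract the formula; you already get $c^4_{p,a,\xi}(\xi+t,\xi,\xi,\xi)=i\,t\,a(\xi,\xi)\,c(\xi,\xi,\xi)+O(t^2)$ without converting any conjugates. The real roles of (H2) are earlier (to make $c^4_p$ vanish on the whole resonant set $\calR$, which is what licenses the division in the first place) and later (to ensure $c(\xi,\xi,\xi)$ is real, so that together with (H3) the resulting diagonal trace is a positive quantity, giving the coercive $L^6$ term).
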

\begin{proof}
Since our symbol does not actually depend on $\xi_0$, it suffices to compute it 
at $\xi = \xi_0$. The advantage is that $p_{a,\xi_0}(\xi_0) = e_{a,\xi_0}(\xi_0)=0$,
so we are left with the simpler expression
\[
j^6_{a}(\xi_0) = m_a(\xi_0) r^4_{p,a,\xi_0}(\xi_0).
\]
For $r^4_{p,a,\xi_0}$ we have the relation 
\[
c^4_{p,a,\xi_0} +  i  \Delta^4 (\xi-\xi_0)^2 b^4_{p,a,\xi_0} = i \Delta^4 \xi r^4_{p,a,\xi_0} .
\]
We differentiate with respect to $\xi_1$ and then set 
all $\xi$'s equal to obtain
\[
\partial_1 c^4_{p,a,\xi_0} (\xi_0) = i r^4_{p,a,\xi_0} (\xi_0).
\]
It remains to compute the $\xi_1$ derivative of 
$c^4_{p,a,\xi_0}$ on the diagonal. This is a direct 
computation using the formula \eqref{pa-xi0}. 
Recalling that 
\[
p_a(\xi_1,\xi_2) = m_a (\xi_1,\xi_2)(-\xi_1-\xi_2+2\xi_0),
\]
it follows that 
\[
r^4_{p,a,\xi_0} (\xi_0) = m_a(\xi_0) c(\xi_0),
\]
where we recall that $c$ is real on the diagonal.
Therefore
\begin{equation*}
j^6_{a,\xi_0}(\xi_0) = a^2 (\xi_0) c(\xi_0) ,
\end{equation*}
as needed.
\end{proof}

\subsubsection{Nonlinear interaction Morawetz: the transversal case}

Here we return to the setting of Section~\ref{s:AB}
where we have two frequency intervals $A,B$ with size 
at most $M$ and separation also $M$, and two smooth and bounded symbols $a,b$ which are localized in the two intervals.
Our interaction Morawetz functional is given by
\begin{equation}\label{interaction-bi}
 \bI_{AB} = \int_{x > y} \ms_a(u)(x) \ps_{b,\xi_0}(v)(y)    - \ps_{a,\xi_0}(u)(x) \ms_b(v)(y)\, dx dy ,
\end{equation}
and we observe as before that this does not depend on $\xi_0$.

Using again the frequency localized mass density-flux \eqref{dens-flux-m-G}  and the corresponding momentum density-flux \eqref{dens-flux-p-G}
we produce a localized interaction Morawetz estimate,
\begin{equation}\label{interaction-xi-AB}
\frac{d}{dt} \bI_{AB} =  \bJ^4_{AB} + \bJ^6_{AB} + \bJ^8_{AB} + \bK^8_{AB} .
\end{equation}

Here the quartic contribution $\bJ^4_{AB}$ is 
the same as in the linear case,
\[
\bJ^4_{AB} = \int M_a(u)(x) E_{b,\xi_0}(v)(x) + M_b(v)(x) E_{a,\xi_0}(u)(x)-  2 P_{a,\xi_0}(u)(x) P_{b,\xi_0}(v)(x)\, dx,
\]
and captures the 
bilinear $L^2$ bound.

The sixth order term $\bJ^6_{AB}$ has the form
\[
\bJ^6_{AB} =   \int -( P_{a,\xi_0} B^4_{p,b,\xi_0}
+ P_{b,\xi_0} R^4_{m,a,\xi_0}) + ( 
M_a R^4_{p,b,\xi_0} +E_{b,\xi_0} B^4_{m,a,\xi_0})      - \text{symmetric} \, dx,
\]
where in the symmetric part we interchange both the indices $a,b$ and the functions $u,v$.

Next we have 
\[
\bJ^8_{AB} =   \int - R^4_{m,a,\xi_0} B^4_{p,b,\xi_0}
 +  
 B^4_{m,a,\xi_0} R^4_{p,b,\xi_0}  - \text{symmetric} \,      dx .
\]

Finally the $8$-linear term $\bK^8_{a,\xi_0}$ has the form
\[
\bK^8_{AB} = \iint_{x > y} \ms_a(x)  R^6_{p,b,\xi_0} + \ps_{b,\xi_0} R^6_{m,a,\xi_0} - \text{symmetric} \, dx dy.
\]
 As before this also includes  a $10$-linear term.


\section{Frequency envelopes and the bootstrap argument}

\label{s:boot}

The primary goal of the proof of our main result in Theorem~\ref{t:main} is to establish a global $L^\infty_t L^2_x$ bound for small data solutions; by the local well-posedness result in Section~\ref{s:local}, this implies
the desired global well-posedness result. However, along the way, we will also establish bilinear $L^2$ and Strichartz bounds for the solutions. These will both play an essential role in the proof of Theorem~\ref{t:main},
and will  also establish the scattering properties of our global solutions.

Since the proof of our estimates loops back in a complex manner, it is most convenient to establish the bilinear $L^2$ and the $L^6$ Strichartz bounds in the setting of a bootstrap argument, where we already assume that the desired bilinear and Strichartz estimates hold but with weaker constants. 

The set-up for the bootstrap is most conveniently described using the language  of frequency envelopes. This was originally introduced  in work of Tao, see e.g. \cite{Tao-WM}, but in the context of dyadic Littlewood-Paley decompositions.  But here instead we work with 
a uniform decomposition on the unit scale, which requires a substantial revision of the notion of ``slowly varying", which we replace by 
the new notion  of ``maximal property" introduced in Section~\ref{s:fe}.

\bigskip

To start with, we assume that the initial data has small size,
\[
\| \du_0\|_{L^2} \lesssim \epsilon.
\]
We consider a frequency decomposition for the initial  data on a unit spatial scale, 
\[
\du_0 = \sum_{k \in \Z} \du_{0,k}.
\]
Then we place the initial data components under an admissible  frequency envelope on the unit scale,
\[
\|\du_{0,k}\|_{L^2} \leq \epsilon c_k, \qquad c \in \ell^2,
\]
where the envelope $\{c_k\}$ is not too large,
\begin{equation*}
\| c\|_{\ell^2} \approx 1.    
\end{equation*}

Our goal will be to establish the following frequency envelope bounds for the solution:

\begin{theorem}\label{t:boot}
 Let $u \in C([0,T];L^2)$ be a solution for the equation \eqref{nls}
with initial data $\du_0$  which has $L^2$ size at most $\epsilon$.
Let $\{\epsilon c_k\}$ be an admissible frequency envelope for the initial data 
in $L^2$, with $c_k$ normalized in $\ell^2$. Then the solution $u$ satisfies 
the following bounds:
\begin{enumerate}[label=(\roman*)]
\item Uniform frequency envelope bound:
\begin{equation}\label{uk-ee}
\| u_k \|_{L^\infty_t L^2_x} \lesssim \epsilon c_k,
\end{equation}
\item Localized Strichartz bound:
\begin{equation}\label{uk-se}
\| u_k \|_{L_{t,x}^6} \lesssim (\epsilon c_k)^\frac23,
\end{equation}
\item Localized Interaction Morawetz:
\begin{equation}\label{uk-bi}
\| \partial_x |u_k|^2  \|_{L^2_{t,x}} \lesssim \epsilon^2 c_k^2,
\end{equation}
\item Transversal bilinear $L^2$ bound:
\begin{equation} \label{uab-bi}
\| \partial_x(u_{A} \bu_B(\cdot+x_0))  \|_{L^2_{t,x}} \lesssim \epsilon^2 c_A c_B\, \la dist(A,B)\ra^{\frac12},
\end{equation}
 for all $x_0 \in \R$ whenever $|A| + |B| \lesssim  \la dist(A,B)\ra $.
\end{enumerate}
\end{theorem}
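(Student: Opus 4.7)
The plan is a continuity/bootstrap argument on the interval $[0,T]$: we assume the four bounds \eqref{uk-ee}--\eqref{uab-bi} hold with a large constant $C$, and improve each of them to the same estimate with constant $C/2$, so that the set of times on which the bootstrap hypotheses hold is both open and closed. The local well-posedness result in Theorem~\ref{t:local-fe}, together with its bilinear corollary in Corollary~\ref{c:local-fe}, provides the base of the bootstrap on a short initial interval. Throughout, $\{c_k\}$ is the admissible frequency envelope for the data, and the key structural fact we exploit is the maximal-function property of $c$, which allows us to pass from Cauchy--Schwarz bounds like $\sum_{k_1+k_3=k_2+k}\la\delta k^{hi}\ra^{-1} c_{k_1}c_{k_2}c_{k_3}\lesssim c_k^3$ exactly as in the proof of Lemma~\ref{l:lwp-cubic-fe}.

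\emph{Improving the Strichartz bound \eqref{uk-se} and the diagonal bilinear bound \eqref{uk-bi}.} I would fix $k$ and apply the localized interaction Morawetz identity \eqref{interaction-xi} to the symbol $a=a_0\otimes a_0$, with $a_0$ a smooth unit-scale bump at $\xi_0=k$, so that $A_0=P_k$ (slightly fattened). Integrating in time gives
\[
\int_0^T \bJ^4_a(u,u)+\bJ^6_a(u,u)\,dt \;=\; \bI_a\Big|_0^T-\int_0^T(\bJ^8_a+\bK^8_a)\,dt.
\]
On the left, $\bJ^4_a$ equals $4\|\partial_x|P_k u|^2\|_{L^2_{t,x}}^2$, which is exactly \eqref{uk-bi} after square-rooting. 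By the diagonal trace formula \eqref{good-J6} and hypothesis (H3), the symbol of $\bJ^6_a$ restricted to the full diagonal is bounded below by $c\,a_0^4(\xi)$, so (after standard symbol manipulations converting a diagonal-positive symbol into a pointwise lower bound) $\bJ^6_a$ controls $\|P_k u\|_{L^6_{t,x}}^6$ from below, giving \eqref{uk-se}. On the right, $|\bI_a|\lesssim \|u_k\|_{L^\infty_t L^2_x}^2\|P_k u\|_{L^\infty_t L^2_x}\|xP_k u\|\ldots$ — more cleanly, using the symmetric definition \eqref{Ia-sharp-def} and the pointwise bounds of Proposition~\ref{p:symbols} with $r=1$, we have $|\bI_a|\lesssim \epsilon^4 c_k^4$ uniformly in time. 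The octic and decic errors $\bJ^8_a$, $\bK^8_a$ are estimated by distributing factors of $u$ across the six/eight/ten inputs using the bootstrap bilinear and Strichartz hypotheses; the support and size information for $R^4_{m,a}$, $R^4_{p,a}$, $B^4_{m,a}$, $B^4_{p,a}$, $R^6_{m,a}$, $R^6_{p,a}$ from Proposition~\ref{p:symbols} guarantees that every term contains at least one pair of factors to which we can apply a bilinear $L^2$ bound, and the remaining factors absorb into $L^6_{t,x}$ or $L^\infty_t L^2_x$. Summing the resulting trilinear envelope inequalities using the maximal property closes the estimate with a small constant, provided $\epsilon$ is small.

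\emph{Improving the transversal bilinear bound \eqref{uab-bi}.} For intervals $A,B$ of size $\lesssim M=\la\mathrm{dist}(A,B)\ra$ separated by $M$, I use the transversal interaction functional \eqref{interaction-bi} in the same way. The quartic term $\bJ^4_{AB}$ now controls $M^2\|A_0 u\cdot \overline{B_0 u(\cdot+x_0)}\|_{L^2}^2$, which is \eqref{uab-bi} up to the factor $M=\la\mathrm{dist}(A,B)\ra$. Boundary terms give $|\bI_{AB}|\lesssim M\,\epsilon^4 c_A^2 c_B^2$ via the pointwise symbol bounds of Proposition~\ref{p:symbols} with $r=M$. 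The higher order terms are again controlled by the bootstrap hypotheses together with the already-improved diagonal bounds. The $x_0$ translation is harmless since all symbols are translation-invariant.

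\emph{Improving the uniform bound \eqref{uk-ee}.} Finally, I use the frequency-localized modified mass identity obtained from \eqref{dens-flux-ma} with $a=|a_0|^2$ supported at frequency $k$: integrating in $x$ kills the flux $\partial_x(P_a+R^4_{m,a})$ and leaves
\[
\ms_a(u)(t)-\ms_a(u)(0) \;=\; \int_0^t \bR^6_{m,a}(u)\,ds.
\]
The correction $\bB^4_{m,a}(u)=\ms_a(u)-\|P_ku\|_{L^2}^2$ is $O(\epsilon^4 c_k^2)$ by Proposition~\ref{p:symbols}, and the sextic remainder is estimated in the standard way by the improved Strichartz \eqref{uk-se} and bilinear \eqref{uk-bi}, \eqref{uab-bi} bounds, together with the maximal-function combinatorics on $c$; this recovers $\|u_k\|_{L^\infty_tL^2_x}^2\leq (\epsilon c_k)^2+O(\epsilon^4 c_k^2)$ and closes \eqref{uk-ee}.

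\emph{Main obstacle.} The delicate step is the control of $\bJ^8$, $\bK^8$, and $\bR^6$ in each of the three identities. These are $6,8,10$-linear expressions whose symbols, provided by Proposition~\ref{p:symbols}, are not purely diagonal. The task is to distribute the factors into bilinear $L^2$ pairs and $L^6_{t,x}$ or $L^\infty_t L^2_x$ factors in such a way that the resulting multilinear envelope sum collapses, via the maximal property of $c$, back to the right power of $c_k$ (or $c_A c_B$). This is precisely where the admissibility of the envelope and the support information $\Omega_1\cup\Omega_2$, $\Omega_1\cup\Omega_3$ in Lemma~\ref{l:division} are indispensable, and is the main content of the next section.
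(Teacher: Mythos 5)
Your architecture---bootstrap continuity argument, density--flux identities, interaction Morawetz, maximal-property envelope combinatorics---matches the paper's, but the order in which you improve the four bounds is wrong in a way that breaks the bootstrap. You propose to improve \eqref{uk-se}, \eqref{uk-bi}, \eqref{uab-bi} first and \eqref{uk-ee} last. The problem is the boundary term $\bI_a$ in the interaction Morawetz identity. Its dominant contribution is the quadratic$\times$quadratic piece $\iint_{x>y} M_a(u)(x)\,P_a(u)(y)\,dx\,dy$ which is controlled by $\|M_a(u)\|_{L^\infty_t L^1_x}\|P_a(u)\|_{L^\infty_t L^1_x}\approx \|u_k\|_{L^\infty_t L^2_x}^4$. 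If at that stage you only have the bootstrap bound \eqref{uk-ee-boot}, this gives $|\bI_a|\lesssim C^4\epsilon^4 c_k^4$, while the quantity you are trying to improve, namely $\int_0^T\bJ^4_a\,dt\approx\|\partial_x|u_k|^2\|_{L^2_{t,x}}^2$, is supposed to be $\lesssim\epsilon^4 c_k^4$. These two have \emph{the same} $\epsilon$-scaling, so the $C^4$ is not paired with any extra powers of $\epsilon$ and cannot be absorbed; you would only recover $\|\partial_x|u_k|^2\|_{L^2_{t,x}}\lesssim C^2\epsilon^2 c_k^2$, which is worse than the hypothesis \eqref{uk-bi-boot}. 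The same obstruction appears for the transversal functional $\bI_{AB}$. So your claim ``using the pointwise bounds of Proposition~\ref{p:symbols} with $r=1$, we have $|\bI_a|\lesssim\epsilon^4 c_k^4$'' is exactly the step that requires the energy bound to already be improved.

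The fix, which is what the paper does, is to prove \eqref{uk-ee} \emph{first}. In the modified-mass identity \eqref{en-ident}, the main term $\bM_a(u)=\|A_0 u\|_{L^2}^2$ is being compared against error terms $\bB^4_{m,a}$ and $\int R^6_{m,a}$ which are genuinely \emph{higher order} in $u$ (quartic, resp.\ sextic). With only the bootstrap hypotheses these errors carry bounds $\epsilon^4 C^4 c_k^2$ and $\epsilon^4 C^6 c_k^2$, and since the target is $\epsilon^2 c_k^2$, the extra factor $\epsilon^2$ crushes any fixed power of $C$ once $\epsilon$ is small. That yields \eqref{uk-ee} with a $C$-independent constant, which then feeds the sharp ($C$-free) estimates $\|M_a(u)\|_{L^\infty_t L^1_x}, \|P_a(u)\|_{L^\infty_t L^1_x}\lesssim\epsilon^2 c_k^2$ into $\bI_a$ and $\bI_{AB}$. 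Only then do the Morawetz improvements close, since now \emph{only} the higher-order terms $\bJ^6_a,\bJ^8_a,\bK^8_a$ carry bootstrap constants, each accompanied by at least one extra power of $\epsilon$. Note also that the paper's control of the sextic error $R^6_{m,a}$ in the energy step (Lemma~\ref{l:R6-AB}) is done directly from the bootstrap hypotheses \eqref{uk-ee-boot}--\eqref{uab-bi-boot}, not from the ``improved'' Strichartz and bilinear bounds as you suggest---those are not yet available at that stage in the correct ordering.
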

Here \eqref{uk-bi} can be seen as a particular case of \eqref{uab-bi}
when $A=B$ have unit length; we stated it separately in order 
to ease comparison with earlier work on Interaction Morawetz estimates.

To prove this theorem, we make a bootstrap assumption where we assume the same bounds but with a worse constant $C$, as follows:

\begin{enumerate}[label=(\roman*)]
\item Uniform frequency envelope bound,
\begin{equation}\label{uk-ee-boot}
\| u_k \|_{L^\infty_t L^2_x} \lesssim C \epsilon c_k,
\end{equation}
\item Localized Strichartz bound,
\begin{equation}\label{uk-se-boot}
\| u_k \|_{L^6_{t,x}} \lesssim C(\epsilon c_k)^\frac23,
\end{equation}
\item Localized Interaction Morawetz,
\begin{equation}\label{uk-bi-boot}
\| \partial_x |u_k|^2  \|_{L^2_{t,x}} \lesssim  C\epsilon^2 c_k^2,
\end{equation}
\item Transversal Interaction Morawetz, 
\begin{equation} \label{uab-bi-boot}
\| \partial_x (u_{k_1} \bar u_{k_2}(\cdot+x_0))  \|_{L^2_{t,x}} \lesssim C \epsilon^2 c_{k_1} c_{k_2} \la k_1-k_2\ra^{\frac12}
\end{equation}
uniformly for all $x_0 \in \R$.
\end{enumerate}

Then we seek to improve the constant in these bounds. The gain will come from the fact that the $C$'s will always come paired
with extra $\epsilon$'s. 

We remark that the bootstrap hypothesis
for the transversal bilinear $L^2$ bound \eqref{uab-bi-boot} 
only requires unit size localization, unlike the corresponding conclusion
\eqref{uab-bi}.  On one hand this simplifies the continuity argument closing the bootstrap. On another hand this is related to the fact that closing the bootstrap argument for global well-posedness only requires \eqref{uab-bi} for unit size sets. The full bound \eqref{uab-bi}
is only used in the last section in order to obtain the global 
Strichartz and bilinear $L^2$ bounds, which are of course very interesting but secondary to the proof of the global result.

We also remark on the need to add translations to the bilinear 
$L^2$ estimates. This is because, unlike the linear bounds
\eqref{uk-ee-boot} and \eqref{uk-se-boot} which are inherently invariant with respect to translations, bilinear estimates are not invariant 
with respect to separate translations for the two factors.
One immediate corollary of \eqref{uab-bi-boot} is that for any multipliers $L_1$ and $L_2$ with smooth and bounded symbols we have 
\begin{equation} \label{uab-bi-boot-trans}
\| \partial_x (L_1(D) u_{k_1} \ol{L_2(D) u_{k_2}}(\cdot+x_0))  \|_{L^2} \lesssim C \epsilon^2 c_{k_1} c_{k_2} \la k_1-k_2\ra^{\frac12}.
\end{equation}
This is essentially the only way we will use this translation invariance
in our proofs.

For the rest of this section, we provide the continuity argument which shows that it suffices  to prove Theorem~\ref{t:boot} under the bootstrap assumptions \eqref{uk-ee-boot}-\eqref{uab-bi-boot}. 

For this, we denote by $T$ the maximal time for which the bounds 
\eqref{uk-ee-boot}-\eqref{uab-bi-boot} hold in $[0,T]$. By the local well-posedness result we have $T \geq 1$. Assume by contradiction that $T$
is finite.
Then the bootstrap version of the theorem implies that the bounds \eqref{uk-ee}-\eqref{uab-bi}
hold in $[0,T]$. In particular, $u(T)$ will also be controlled 
by the same maximal envelope $c_j$ coming from the initial data.
By the local well-posedness result, this implies in turn that 
the bounds  \eqref{uk-ee-boot}-\eqref{uab-bi-boot} hold in $[T,T+1]$
with $C \approx 1$. Adding this to the bounds \eqref{uk-ee}-\eqref{uab-bi}
in $[0,T]$, it follows that  \eqref{uk-ee-boot}-\eqref{uab-bi-boot} hold in $[0,T+1]$, thereby contradicting the maximality of $T$.

  
\section{The frequency envelope bounds}
\label{s:fe-bounds}

The aim of this section is to prove the frequency envelope bounds in 
Theorem~\ref{t:boot}, given the bootstrap assumptions \eqref{uk-ee-boot}-\eqref{uab-bi-boot}. In the proof we will rely
on our modified energy and momentum functionals, whose components
we estimate first. The frequency localized energy estimate \eqref{uk-ee} will be
an immediate consequence of these bounds. For the Strichartz and $L^2$
bilinear bounds we will then use the interaction Morawetz identities,
first in a localized diagonal setting and then in a transversal setting.

\subsection{Spatial and space-time \texorpdfstring{$L^1$}{} bounds}
Here we consider the corrections $B^4_{m,a}$ and errors $R^6_{m,a}$ 
and their momentum counterparts associated to a smooth bump function $a$ selecting a frequency interval $A \subset \Z$. For  $B^4_{m,a}$ we will prove a fixed time $L^1$ bound, while for $R^6_{m,a}$ we will prove a space-time $L^1$ bound.
These bounds will be repeatedly used in each of the following subsections, first in the case when $A$ has unit size and then
in the case when $A$ has a larger size. We begin with the 
$B^4_{m,a}$ bound.

\begin{lemma}\label{l:B4-multi}
Assume that the bootstrap bound \eqref{uk-ee-boot} holds. Then we have the fixed time estimate 
\begin{equation}\label{b4-ma}
\| B^4_{m,a}(u) \|_{L^1_x} \lesssim \epsilon^4 C^4
c_A^2.
\end{equation}
\end{lemma}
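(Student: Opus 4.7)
The plan is to decompose $u$ on the unit frequency scale, bound each piece of $B^4_{m,a}(u,\bar u,u,\bar u)$ in $L^1_x$ via H\"older and the symbol estimates from Proposition~\ref{p:symbols}, and then carefully sum over the four-index lattice exploiting both the support properties of $b^4_{m,a}$ and the maximal function property of the admissible envelope $c$.

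First, for each quadruple $(k_1, k_2, k_3, k_4)$ I would apply H\"older's inequality with exponents $(2,2,\infty,\infty)$ and use Bernstein's inequality, valid because each $u_{k_i}$ is unit frequency localized, to convert the $L^\infty$ factors back into $L^2$. Combining this with the symbol bound $|b^4_{m,a}| \lesssim 1/(\la\xih\ra\la\xim\ra)$ from Proposition~\ref{p:symbols} (with $\xih$, $\xim$ evaluated at $\xi_i = k_i$) and the bootstrap hypothesis \eqref{uk-ee-boot} yields the pointwise-in-indices bound
\[
\|B^4_{m,a}(u_{k_1}, \bar u_{k_2}, u_{k_3}, \bar u_{k_4})\|_{L^1_x} \lesssim \frac{(C\epsilon)^4 c_{k_1} c_{k_2} c_{k_3} c_{k_4}}{\la\xih\ra\la\xim\ra}.
\]

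Next, I would sum over the quadruples exploiting the support of $b^4_{m,a}$: at least one $\xi_i$ lies in $A$, and the tuple belongs to $\Omega_1 \cup \Omega_2$. By the symmetries $\xi_1 \leftrightarrow \xi_3$ and $\xi_2 \leftrightarrow \xi_4$ of the form I may reduce to the case $k_1 \in A$. In the near-resonant region $\Omega_1$ the constraint $\xim \lesssim 1$ forces $\{k_1,k_3\}$ and $\{k_2,k_4\}$ to coincide as sets up to $O(1)$, so together with $k_1 \in A$ two of the four indices lie within $O(1)$ of $A$. The maximal function property $Mc \lesssim c$ then absorbs these two nearby indices, reducing the sum to $\sum_{k_1 \in A, k_3} c_{k_1}^2 c_{k_3}^2/\la k_1 - k_3\ra$, which is bounded by $c_A^2$ since $\|c\|_{\ell^2} \approx 1$.

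The hard part will be the transversal region $\Omega_2$, where $\la\xih\ra\la\xim\ra \approx |\tDelta^4\xi^2|$ is elliptic but the four frequencies may range broadly. Here one must combine the additional support constraint $|\Delta^4 \xi| \lesssim |A|$, inherited from the explicit $a_0$ factors in $c^4_{m,a}$ which force the output frequency to be of order $|A|$, with the decay from $1/|\tDelta^4\xi^2|$ and the maximal function property of $c$ in order to avoid logarithmic losses along the various pair distances. Adding the contributions from the two regions then gives the claimed bound $\|B^4_{m,a}(u)\|_{L^1_x} \lesssim \epsilon^4 C^4 c_A^2$.
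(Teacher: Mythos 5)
Your overall strategy matches the paper's: decompose on the unit lattice, bound each block in $L^1_x$ using the symbol size from Proposition~\ref{p:symbols} together with the bootstrap $L^2$ bounds, and then sum over the lattice using the maximal property of $c$. The first step (H\"older $(2,2,\infty,\infty)$ plus Bernstein, giving the block bound $\lesssim (C\epsilon)^4 c_{k_1}c_{k_2}c_{k_3}c_{k_4}/(\la\xih\ra\la\xim\ra)$) is correct and is essentially how the paper reads "estimate each term in $L^1_x$ based on the size of the symbol". Your $\Omega_1$ sum is also fine: once $\{k_1,k_3\}$ and $\{k_2,k_4\}$ pair up to $O(1)$, summing the near indices and then $\sum_{k_3} c_{k_3}^2/\la k_1-k_3\ra \le \|c\|_{\ell^2}^2$ closes that region.

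The genuine gap is the $\Omega_2$ case, which you yourself flag as "the hard part" and only sketch. This is precisely where the lemma's content lies, and the outline you give is not enough to close it. Two specific issues. First, the constraint $|\Delta^4\xi|\lesssim |A|$ that you want to "inherit from the $a_0$ factors" is a property of the original symbol $c^4_{m,a}$, but Proposition~\ref{p:symbols} only asserts the support property that at least one frequency lies in $A$; you would need to check that the division construction in Lemma~\ref{l:division} (finite differences in $\Omega_1$, pointwise division in $\Omega_2$, $\Omega_3$) preserves the $|\Delta^4\xi|$-localization, and you do not. Second, and more importantly, even granting that constraint, you never actually carry out the four-index summation; "combine the decay from $1/|\tDelta^4\xi^2|$ with the maximal property to avoid logarithmic losses" is exactly the nontrivial step. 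The paper avoids both issues and the $\Omega_1$/$\Omega_2$ split altogether: after fixing $k_1\in A$ (and Galilean-shifting $k_1$ to $0$) it proves the \emph{stronger} unrestricted estimate
\[
S_{k_1}:=\sum_{k_2,k_3,k_4}\frac{c_{k_2}c_{k_3}c_{k_4}}{\la \delta k^{hi}\ra\la \delta k^{med}\ra}\lesssim c_{k_1},
\]
with no $\Omega$ or $\Delta^4$ restriction at all. This is done by reordering so that $|k_2|\lesssim n_1$, $|k_4|\approx n_2$, $|k_3-k_4|\lesssim n_1$ (with $n_1\le n_2$ dyadic), applying the maximal property twice — once to sum $k_2$ against $k_1=0$, once to sum $k_3$ against $k_4$ — and then summing $\frac{n_1}{n_2}\, c_{n_2}^2$ over $n_1\le n_2$. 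You should either replicate that computation or supply an equally explicit argument in $\Omega_2$; as written, the proposal does not establish the bound.
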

The corresponding bound for the momentum follows as a corollary, once 
we add an additional assumption in order to fix the momentum size:

\begin{corollary}\label{c:B4-multi}
Assume that the bootstrap bound \eqref{uk-ee-boot} holds. 
Let $\xi_0 \in \R$, and
\[
n = \max_{k \in A} |k -\xi_0|.
\]
Then we have the fixed time estimate 
\begin{equation}\label{b4-pa}
 \| B^4_{p,a,\xi_0}(u) \|_{L^1_x} \lesssim n \epsilon^4 C^4 c_A^2,
\end{equation}
\end{corollary}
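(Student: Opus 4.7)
The plan is to obtain the corollary as a direct consequence of Lemma~\ref{l:B4-multi}, by extracting from the momentum symbol an extra factor of $n$ and then invoking the same multilinear estimate.

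The first step is to apply Proposition~\ref{p:symbols} with the enveloping interval $J = [\xi_0 - n, \xi_0 + n]$, which contains $A$ and has length $r \approx n$ with $d(\xi_0, J) = 0 \lesssim r$. This yields the size estimate
\[
|\partial^\alpha b^{4}_{p,a,\xi_0}| \lesssim \frac{n}{\langle \xi^{\text{hi}} \rangle \langle \xi^{\text{med}} \rangle},
\]
along with the same kind of support restriction (at least one frequency in a unit-scale neighbourhood of $A$, inherited from the support of $a$) as that satisfied by $b^{4}_{m,a}$. Thus the symbol of $B^{4}_{p,a,\xi_0}$ is pointwise dominated by $n$ times a symbol with the same size and regularity as $b^{4}_{m,a}$.

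With this symbol bound in hand, the bound \eqref{b4-pa} follows by repeating the argument of Lemma~\ref{l:B4-multi} verbatim, with the factor of $n$ pulled out at the outset. That argument combines, in the setting of the mass symbol $b^{4}_{m,a}$, the following ingredients: a unit-frequency decomposition of $u$; pointwise kernel bounds derived from the smoothness of the symbol on the unit scale; a Cauchy-Schwarz pairing of the four factors into two bilinear products estimated in $L^2_x$; Bernstein's inequality to pass between $L^2_x$ and $L^\infty_x$ on unit-scale localized pieces (as noted in Remark~\ref{r:X}); the bootstrap bound \eqref{uk-ee-boot}; and a frequency summation closed using the maximal property of the envelope $c$ from Section~\ref{s:fe}, which produces the factor $c_A^2$. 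Each of these ingredients is manifestly insensitive to $\xi_0$, so replacing $b^{4}_{m,a}$ by $b^{4}_{p,a,\xi_0}$ merely multiplies the final output by $n$, delivering \eqref{b4-pa}.

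Since no genuinely new estimate is required, I do not anticipate real technical obstacles. The only bookkeeping point worth checking is that the support of $b^{4}_{p,a,\xi_0}$ localizes the summation over unit-scale frequency indices to a neighbourhood of $A$ itself (rather than the larger interval $J$), so that the envelope sum closes as $c_A^2$; this follows from the fact that Proposition~\ref{p:symbols} preserves the support of the symbol up to a unit-scale enlargement, which is harmless under the maximal property of $c$.
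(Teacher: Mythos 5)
Your proposal is correct and follows essentially the same route as the paper, which disposes of the corollary in one sentence by observing that the momentum symbol $p_{a,\xi_0}$ carries an extra factor of size $n$ compared to $m_a$, after which the estimate of Lemma~\ref{l:B4-multi} applies unchanged. Your explicit invocation of Proposition~\ref{p:symbols} with $J = [\xi_0-n,\xi_0+n]$ is a cleaner way to make that step precise, and your closing remark about the support of $b^4_{p,a,\xi_0}$ correctly identifies the one point that the proposition as stated (support in $J$) does not literally give but that the construction in Lemma~\ref{l:division} does (support in a unit neighbourhood of $A$, since the partition-of-unity divisions never enlarge the support of $c^4_{p,a,\xi_0}$). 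This matches the support claim the paper itself uses inside the proof of Lemma~\ref{l:B4-multi}.
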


\begin{proof}
The bounds \eqref{b4-ma} and \eqref{b4-pa} are similar, the only difference arises from the additional $n$ factor in the size of the symbol $p_A$. So we will prove the first bound.
Using our partition of unity in frequency on the unit scale we 
expand 
\[
 B^4_{m,a}(u) = \sum_{k_1,k_2,k_3,k_4 \in \Z} B^4_{m,A}(u_{k_1},\bu_{k_2},u_{k_3},\bu_{k_4}).
 \]
Here we will separately estimate each term in $L^1_x$ based on the size of the symbol. By Proposition~\ref{p:symbols}, for frequencies within a unit neighbourhood of $[k]=(k_1,k_2,k_3,k_4)$ the symbol $b^4_{m,a}$
and its derivatives can be estimated by 
\[
b^4_{m,a}[k]:= \frac{1}{\la \delta k^{hi}\ra \la \delta k^{med}\ra}.
\]
In addition, its support is contained in the region $\Omega_1 \cup \Omega_2$, where at least one frequency is in $A$. The region $\Omega_1 \cup \Omega_2$ can be described as the set of those quadruples $[k]$
so that 
\begin{equation}\label{which-k}
\text{ either} \quad |\Delta^4 k| \lesssim 1, \quad \text{or} \quad
 |\Delta^4 k| \ll \delta k^{med}.
\end{equation}

Without loss in generality we assume that $k_1 \in A$. 
Then, using the above properties, we can estimate the $L^1_x$ bound in the lemma as 
\[
\| B^4_{m,a}(u) \|_{L^1_x} \lesssim \epsilon^4 \sum_{k_1 \in A} \sum_{[k] \in \Omega_1 \cup \Omega_2} \frac{1}{\la \delta k^{hi}\ra \la \delta k^{med}\ra}
c_{k_1} c_{k_2} c_{k_3} c_{k_4}.
\]
Fixing $k_1 \in A$, it suffices to show that 
\begin{equation}\label{fix-k1}
S_{k_1} :=  \sum_{k_2,k_3,k_4} \frac{1}{\la \delta k^{hi}\ra \la \delta k^{med}\ra} c_{k_2} c_{k_3} c_{k_4} \lesssim c_{k_1}.
\end{equation}
This no longer has anything to do with the set $A$. For later 
use we have also removed the  restriction $[k] \in \Omega_1 \cup \Omega_2$.

To discuss the possible configurations for $[k]$
we denote by $n_1 \leq n_2$ the dyadic size of $\delta k^{med}$,
respectively $\delta k^{hi}$.  By Galilean invariance we set 
$k_1=0$, and then the rest of the indices may be reordered so that  
\[
 |k_2| \lesssim  n_1, \qquad |k_3| \lesssim n_2, \quad  |k_4| \approx n_2, \qquad |k_3-k_4| \lesssim n_1.
\]
Then we have
\[
S \lesssim \sum_{n_1 \leq n_2} \sum_{|k_2| \lesssim n_1}
\sum_{|k_4| \approx n_2}
\sum_{|k_3-k_4| \lesssim n_1}
\frac{1}{n_1 n_2} c_{k_2} c_{k_3} c_{k_4} .
\]
Here we use twice the envelope maximal function bound  to write
\[
\frac{1}{n_1} \sum_{|k_2| \lesssim n_1} c_{k_2} \lesssim c_0,
\qquad 
\frac{1}{n_1} \sum_{|k_3-k_4| \lesssim n_1} c_{k_3} \lesssim c_{k_4}.
\]
This gives
\[
S \lesssim  c_0 
\sum_{n_1 \leq n_2} \frac{n_1}{n_2} \sum_{|k_4| \approx n_2}  c_{k_4}^2 
\approx c_0 \sum_{n_2} \sum_{|k_4| \approx n_2} c_{k_4}^2\lesssim c_0.
\]
This concludes the proof of \eqref{fix-k1}, and therefore the proof of the lemma.
\end{proof}

Next we turn our attention to $R^6_{m,a}$, which we estimate as follows:

\begin{lemma}\label{l:R6-AB}
Under our bootstrap assumptions \eqref{uk-ee-boot}-\eqref{uab-bi-boot}, we have the space-time bound
\begin{equation}\label{R6-m-bd}
\|R^6_{m,a}\|_{L^1_{t,x}} \lesssim \epsilon^4 C^6 c_A^2.
\end{equation}
\end{lemma}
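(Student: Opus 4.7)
The strategy parallels the proof of Lemma~\ref{l:B4-multi}, with the adaptations needed for the 6-linear, space-time setting. First, I would make explicit the structure of $R^6_{m,a}$: it arises from substituting $u_t = iu_{xx} - iC(u,\bar u,u)$ in the time derivative of $B^4_{m,a}(u,\bar u,u,\bar u)$ and retaining the cubic contributions, since the linear part produces the flux by the division identity \eqref{choose-R4ma}. Hence $R^6_{m,a}$ is a sum of four terms, one per slot of $B^4_{m,a}$, whose symbol has the product form
\[
b^4_{m,a}(\tilde{\xi}_1,\xi_2,\xi_3,\xi_4)\, c(\eta_a,\eta_b,\eta_c),
\]
with $\tilde\xi_1 = \eta_a-\eta_b+\eta_c$ being the combined frequency produced by the cubic substitution and $\xi_2,\xi_3,\xi_4$ being three of the remaining $\eta$'s.

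Next, I would decompose $R^6_{m,a}$ on the unit frequency scale,
\[
R^6_{m,a}(u) = \sum_{[k]\in \Z^6} R^6_{m,a,[k]}(u_{k_1},\bar u_{k_2},u_{k_3},\bar u_{k_4},u_{k_5},\bar u_{k_6}),
\]
where by Proposition~\ref{p:symbols} and the boundedness of $c$ the symbol satisfies $|r^6_{m,a}[k]| \lesssim 1/(\la \delta \tilde k^{hi}\ra \la \delta \tilde k^{med}\ra)$ and is supported where at least one of the four effective frequencies $(\tilde k_1, k_d, k_e, k_f)$ lies in $A$. To estimate the $L^1_{t,x}$ norm of each piece I would split the six factors via H\"older into two bilinear $L^2_{t,x}$ pairs and two $L^\infty_{t,x}$ singletons, invoking the bilinear bootstrap \eqref{uab-bi-boot-trans} for the pairs and Bernstein applied to the energy bootstrap \eqref{uk-ee-boot} for the singletons. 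This gives a pointwise bound of order $C^4 \epsilon^6 \prod_{i=1}^6 c_{k_i}$ with additional gains $\la k_i-k_j\ra^{-1/2}$ from each bilinear pair; since $\epsilon^6 \leq \epsilon^4$, this majorizes the target $C^6 \epsilon^4$.

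Finally, I would sum the $[k]$-indices against this product bound. Splitting by which of the four effective frequencies is constrained to $A$, and in the case where the constraint falls on $\tilde k_1 = k_a-k_b+k_c$ performing a change of variables in the 3-fold cubic sum, the summation reduces to an analog of the computation \eqref{fix-k1} in Lemma~\ref{l:B4-multi}. The maximal-function property $Mc \lesssim c$ of the admissible envelope collapses the ``slow'' directions coming from the $b^4_{m,a}$-decay, while the bilinear gains $\la k_i-k_j\ra^{-1/2}$ handle the directions intrinsic to the two pairs, producing a factor of $c_{k^*}^2$ on the $A$-index and hence $c_A^2$ after summation over $k^* \in A$.

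The main obstacle I expect is the case where the $A$-constraint falls on the combined frequency $\tilde k_1 = k_a-k_b+k_c$ rather than a direct index, since the corresponding 3-fold sum does not fit the template of Lemma~\ref{l:B4-multi} verbatim. However, after reparametrizing so that $\tilde k_1$ is one of the summation variables and using Schur-type bounds on the bilinear decay to handle the remaining two cubic variables, this case reduces to the same maximal-function telescoping used in the proof of \eqref{fix-k1}.
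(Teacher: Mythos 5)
Your high-level strategy matches the paper's: identify $R^6_{m,a}$ as the cubic contributions to $\partial_t B^4_{m,a}$, constrain one of the four slots of $B^4_{m,a}$ to lie in $A$, use Proposition~\ref{p:symbols} for the symbol size and support, estimate the resulting $6$-linear pieces by two bilinear $L^2$ bounds (via \eqref{uab-bi-boot}) plus two $L^\infty$ bounds (via \eqref{uk-ee-boot} and Bernstein), and finally sum the $[k]$-indices using the maximal property of the envelope. The paper also performs a Galilean reduction to place the constrained slot at frequency $0$, which you handle implicitly via your change of variables; this is fine.

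The genuine gap is in your last step. You assert that the $[k]$-summation ``reduces to an analog of the computation \eqref{fix-k1}'' plus some Schur-type bounds. It does not. The estimate \eqref{fix-k1} is a $3$-index sum controlled by a single application of the maximal property twice, and its combinatorics are essentially those of a quartic resonance. By contrast, the $6$-linear sum here has five free indices constrained only by $\Delta^6 k = 0$ and by the support conditions inherited from $b^4_{m,a}$, and its dyadic geometry depends crucially on \emph{which} of the four $B^4$-slot frequencies equals the constrained $k_A$: the paper must separately treat the cases $k_0\in A$ (the expanded slot), $k_1\in A$ with $|k_0|\approx\delta k^{\mathrm{med}}$, $k_1\in A$ with $|k_0|\approx\delta k^{\mathrm{hi}}$, and $k_2\in A$, each of which splits further by comparing the spread $n$ of the full six-tuple to $\delta k^{\mathrm{hi}}$. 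Several of these subcases require tricks that are not present in \eqref{fix-k1}: in one, a new dyadic scale $n_3$ must be introduced for the triple $(k_4,k_5,k_6)$; in another, Cauchy--Schwarz is applied across the ``mixed'' pair $(k_3,k_6)$ at the cost of a loss in $n_1$ that then has to be recovered; in yet another, a fourth variable has to be summed with H\"older rather than Cauchy--Schwarz to avoid a logarithmic divergence. You have correctly flagged the hardest case (the constraint falling on the combined frequency $\tilde k_1 = k_a - k_b + k_c$, i.e.\ $k_0$), but even that case alone splits into two genuinely distinct subcases in the paper, and the other cases are not mere variants of \eqref{fix-k1} either. To complete the proof you would need to carry out the full dyadic case analysis rather than appeal to the $4$-linear template.

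One minor point: your chain $C^4\epsilon^6 \leq C^6\epsilon^4$ should be stated as requiring $\epsilon \lesssim C$ rather than just ``$\epsilon^6 \leq \epsilon^4$,'' since the bootstrap constant $C$ also enters; this is harmless but worth making explicit.
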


As above, we also have a similar bound for the momentum:

\begin{corollary}\label{c:R6-AB}
Assume that the bootstrap bounds \eqref{uk-ee-boot}-\eqref{uab-bi-boot}
hold. Let $\xi_0 \in \R$, and
\[
n = \max_{k \in A} |k -\xi_0|.
\]
Then we have  the space-time bound
\begin{equation}\label{R6-p-bd}
\|R^6_{p,a,\xi_0}\|_{L^1_{t,x}}
\lesssim n \epsilon^4 C^6 c_A^2.  
\end{equation}
\end{corollary}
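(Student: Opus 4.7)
The plan is to reduce the momentum bound to the mass bound of Lemma~\ref{l:R6-AB} by tracking how the extra factor of $n$ enters the symbol of $R^6_{p,a,\xi_0}$ compared to $R^6_{m,a}$. Since $R^6_{p,a,\xi_0}$ arises (in complete analogy with $R^6_{m,a}$) as the sextic residue obtained by applying the nonlinearity $C(u,\bar u,u)$ in the time derivative of the quartic correction $B^4_{p,a,\xi_0}$, its symbol is built from $b^4_{p,a,\xi_0}$ and the cubic symbol $c$ in exactly the same way that $r^6_{m,a}$ is built from $b^4_{m,a}$ and $c$. Consequently, by Proposition~\ref{p:symbols}, on unit scale neighbourhoods $[k]$ we have the pointwise comparison
\[
|r^6_{p,a,\xi_0}[k]| \lesssim n \, |r^6_{m,a}[k]|,
\]
which simply reflects the fact that $p_{a,\xi_0} = -(\xi+\eta-2\xi_0)\, a$ is bounded by $O(n)$ on the support of $a$, while $m_a$ is $O(1)$.

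First I would decompose $R^6_{p,a,\xi_0}$ using the unit-scale partition of unity in frequency,
\[
R^6_{p,a,\xi_0}(u) = \sum_{k_1,\dots,k_6} R^6_{p,a,\xi_0}(u_{k_1},\bar u_{k_2},u_{k_3},\bar u_{k_4},u_{k_5},\bar u_{k_6}),
\]
and restrict the sum to configurations $[k]$ in the support of $r^6_{p,a,\xi_0}$. Up to the extra $n$ factor described above, this support structure is identical to that in Lemma~\ref{l:R6-AB}: at least one frequency must lie in $A$ (accounting for the $c_A^2$ weight) and the remaining frequencies must be coupled through the resonant relations inherited from the division in \eqref{choose-R4p-a}.

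Next I would estimate each dyadic piece using the bootstrap hypotheses exactly as in the proof of Lemma~\ref{l:R6-AB}: the bilinear $L^2_{t,x}$ bounds \eqref{uab-bi-boot-trans} pair up two of the six factors at the cost of $\epsilon^2 C^2 c_{k_i} c_{k_j}\langle k_i-k_j\rangle^{1/2}$, the $L^6_{t,x}$ bound \eqref{uk-se-boot} handles the remaining four factors (paired into two $L^6 \cdot L^3$ slots or similar), and the uniform $L^\infty_t L^2_x$ bound \eqref{uk-ee-boot} mops up any leftover factors. The combinatorial sum over $(k_2,\dots,k_6)$ for fixed $k_1\in A$ is controlled using the maximal frequency envelope property exactly as in the treatment of $S_{k_1}$ in the proof of Lemma~\ref{l:B4-multi}.

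The main and essentially only obstacle is bookkeeping: one must verify that the extra $n$ factor in the symbol bound does not interact badly with the division regions $\Omega_1,\Omega_2,\Omega_3$ introduced in Lemma~\ref{l:division} and that the corresponding frequency configurations still allow the bilinear $L^2$ gain $\langle k_i-k_j\rangle^{1/2}$ to absorb the worst terms (those where $\delta k^{hi}\gg \delta k^{med}$). Since $n$ is independent of the summation indices, the factor simply pulls out of the sum, yielding the claimed bound $n\,\epsilon^4 C^6 c_A^2$.
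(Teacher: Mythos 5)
Your proof is essentially correct and takes the same approach the paper does. The paper's proof of this corollary is just the single remark ``the proof of \eqref{R6-p-bd} is essentially the same'' as that of \eqref{R6-m-bd}, and the substance you supply — namely that $R^6_{p,a,\xi_0}$ is built from $b^4_{p,a,\xi_0}$ and $c$ in exactly the way $R^6_{m,a}$ is built from $b^4_{m,a}$ and $c$, that Proposition~\ref{p:symbols} bounds $b^4_{p,a,\xi_0}$ by $n$ times the bound for $b^4_{m,a}$ (with the same support and regularity structure), and that this $n$ factor is inert with respect to the summation over frequency configurations — is precisely what justifies that remark.

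One small imprecision worth flagging: your description of which combinations of bounds are used (``bilinear $L^2$ pairs up two factors, $L^6$ handles four, $L^\infty_t L^2_x$ mops up leftovers'') does not quite match the case analysis in the proof of Lemma~\ref{l:R6-AB}. In the diagonal case A the paper uses only the localized $L^6$ bound \eqref{uk-se-boot} (four factors) plus $L^\infty$ via Bernstein (two factors); in the separated case B it uses two bilinear $L^2$ bounds \eqref{uab-bi-boot} (four factors) and two $L^\infty$ bounds from \eqref{uk-ee-boot} via Bernstein. No $L^6 \cdot L^3$ pairing appears. This does not affect the conclusion, since the extra $n$ in the symbol simply scales every term in the sum uniformly, but if you are going to cite ``exactly as in Lemma~\ref{l:R6-AB}'' it is cleaner to state the actual estimate scheme used there rather than a plausible-sounding variant.
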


\begin{proof} 
As in the case of the earlier fixed time bound, we will focus 
on \eqref{R6-m-bd}, as the proof of \eqref{R6-p-bd} is essentially the same.
We recall $R^6_{m,a}$ is obtained from the cubic terms in the time derivative of $B^4_{m,a}$. We denote the four frequencies in $B^4_{m,a}$ by $k_0,k_1,k_2,k_3$,
where the $k_0$ factor gets differentiated in time. 
One of these four frequencies, call it $k_A$, must be in $A$.

With $k_A$ as above, we expand 
\[
R^6_{m,a}(u) = \sum_{k_A \in A} R^6_{m,a,k_A}(u).
\]
Then it suffices to establish the bound
\begin{equation}
\|   R^6_{m,a,k_A}(u) \|_{L^1_{t,x}} \lesssim \epsilon^4 C^6 c_{k_A}^2.  \end{equation}
Here by Galilean invariance we can set $k_A=0$. We also drop the index $A$, as  no localization associated to the set $A$ will  be used in the sequel. In particular we replace $B^4_{m,a}$ by $B^4_{m,0}$ to emphasize that one of the frequencies in $B^4$
is assumed to be near zero.

 To describe  the size and localization of the symbol $b^4_{m,0}$
we introduce as before the notations $\delta k^{med}$ and  $\delta k^{hi}$ for the distances between $k_0,k_1,k_2,k_3$,
$n_1 < n_2$ for the dyadic size of  $\delta k^{med}$ and  $\delta k^{hi}$, and $\Delta^4 k$ associated to the same indices. In the support of $b^4_{m,0}$ we must have 
\begin{equation}\label{d4-k}
|\Delta^4 k| \lesssim 1 \qquad \text{or} \qquad  |\Delta^4 k| \ll n_1, 
\qquad 0 \in \{k_0,k_1,k_2,k_3\}.
\end{equation}
In this region, the symbol of $b^4_{m,0}$ as well as its derivatives have size
\begin{equation}
|b^4_{m,0}| \lesssim \frac{1}{n_1 n_2}. 
\end{equation}
The time differentiation is producing three additional frequencies $k_4,k_5, k_6$, so that 
\begin{equation}\label{gamma2}
k_0 = k_4-k_5+k_6    .
\end{equation}
Then \eqref{d4-k} translates to 
\begin{equation}\label{d6-k}
|\Delta^6 k| \lesssim 1 \qquad \text{or} \qquad |\Delta^6 k| \ll n_1
\end{equation}
relative to the indices $k_1,\cdots, k_6$.

Overall, for $R^{6}_{m,0}$ we have the decomposition
\[
R^{6}_{m,0}(u) = \sum_{n_1 \leq n_2} \sum_{k_{0-7} \in \Gamma}
R^{6}_{m,0}(u_{k_1},\bu_{k_2}, u_{k_3},\bu_{k_4},u_{k_5},\bu_{k_6}),
\]
where $\Gamma$ describes the set of indices 
satisfying \eqref{d4-k} and \eqref{gamma2}. To bound this sum in $L^1_{t,x}$ we consider several cases:
\bigskip

A. If all six frequencies are near $0$, then we use the localized $L^6$ bound to 
obtain 
\[
\|R^{6}_{m,0}(u_{k_1},\bu_{k_2}, u_{k_3},\bu_{k_4},u_{k_5},\bu_{k_6}) \|_{L^1_{t,x}}
\lesssim C^4 (\epsilon c_0)^{4},
\]
which suffices.

\bigskip

B. Otherwise,  we denote by $1 \ll n$ the minimal dyadic size of the interval  containing all six $k$ indices. Clearly 
we have $n_1 \leq n_2 \leq n$.
Also due to 
\eqref{d6-k} we must also have $|\Delta^6 k| \ll n$.
This implies that within the set $(k_1,\cdots,k_6)$
there must be at least two disjoint pairs of frequencies at distance  comparable to $n$. Applying two bilinear $L^2$
estimates, and $L^\infty$ bounds for the other two factors, 
we can bound 
\begin{equation}
   \|R^{6}_{m,0}(u)\|_{L^1_{t,x}} \lesssim \epsilon^6 C^4 S, 
   \qquad S = 
   \sum_{n_1 \leq n_2} \sum_{k_{0-6} \in \Gamma} 
   \frac{1}{n_1 n_2 n} c_{k_1} c_{k_2}c_{k_3} c_{k_4} c_{k_5} c_{k_6}.
\end{equation}
It remains to bound the above sum $S$ by 
\begin{equation}
S \lesssim c_0^2.    
\end{equation}
There are several cases to consider:

\bigskip

B1. $k_0 = 0$. Relabeling, we may assume that
\begin{equation}\label{gamma1a}
|k_1| \approx n_1, \quad |k_2-k_3| \approx n_1, \quad
|k_2|, |k_3| \approx n_2.
\end{equation}
We distinguish further cases by comparing $n_2$ and $n$.

\medskip

B1a. $n \gg n_2$.
Then we may assume that 
\begin{equation}\label{big-k}
|k_4| \lesssim |k_5| \approx |k_6| \approx n,
\end{equation}
For fixed $k_4$ we can apply the Cauchy-Schwarz inequality for the 
pair of indices $(k_5,k_6)$, and also for $(k_2,k_3)$.
We obtain
\[
S \lesssim  \sum_{n_1 \leq n_2 \ll n} 
\sum_{|k_1| \approx n_1, |k_4| \lesssim n} \frac{1}{n_2 n}
c_{k_1} c_{k_4} c_{n_2}^2 c_{n}^2
=  \sum_{n_2 \ll n} 
\sum_{|k_1| \leq n_2, |k_4| \lesssim n} \frac{1}{n_2 n}
c_{k_1} c_{k_4} c_{n_2}^2 c_{n}^2.
\]
Now we use twice the envelope maximal bound for 
the $k_1$, respectively $k_4$ summation to get
\[
S \lesssim c_0^2 \sum_{n_2 \ll n} c_{n_2}^2 c_{n}^2 \approx c_0^2.
\]

B1b. $k_0 = 0$, $n \approx n_2$.  In this case we can introduce another dyadic parameter $n_3 \leq n$ so that, after relabeling,
\begin{equation}\label{m3}
|k_4| \leq |k_5| \approx |k_6| \approx n_3.
\end{equation}
Then applying Cauchy-Schwarz inequality exactly as above 
we arrive at 
\[
S \lesssim \sum_{n_3 \leq n_2} 
\sum_{|k_1| \leq n_2, |k_4| < n_3} \frac{1}{n_2^2}
c_{k_1} c_{k_4} c_{n_2}^2 c_{n_3}^2,
\]
where we can conclude again by applying twice the envelope maximal bound for the $k_1$, respectively the $k_4$ summation relative to $0$.

\bigskip

B2. $k_1 = 0$, $|k_0| \approx n_1$. In this case we must also have 
\begin{equation}\label{gamma1b}
|k_2-k_3| \approx n_1, \quad
|k_2|, |k_3| \approx n_2.
\end{equation}
Again we compare $n$ and $n_2$:

\medskip
 
B2a. $n \gg n_2$. Here we can assume again that \eqref{big-k} holds. As in case B1a we apply 
Cauchy-Schwarz inequality for the 
pair of indices $(k_5,k_6)$, and also for $(k_3,k_2)$,
with the difference that now the difference $k_5-k_6$
is no longer fixed, instead it varies in an $n_1$ range.
Thus we lose two $n_1$ factors, obtaining 
\[
S \lesssim  c_{0} \sum_{n_1 \leq n_2 \ll n} 
\sum_{|k_4| < n} \frac{n_1}{n_2 n} c_{k_4} c_{n_2}^2 c_{n}^2.
\]
The $n_1$ summation is trivial now, and for the $k_4$
summation we use the envelope maximal bound to obtain
\[
S \lesssim  c_{0}^2 \sum_{n_2 \ll n} 
 c_{n_2}^2 c_{n}^2 \approx c_0^2.
\]

\medskip

B2b.  $n \approx  n_2$. Here we take two subcases. 

\medskip

B2b(i). If 
\[
|k_4|+|k_5|+|k_6| \lesssim n_1,
\]
then we use Cauchy-Schwarz  inequality for the pair 
$(k_2,k_3)$ to obtain
\[
S \lesssim c_0 \sum_{n_1 \leq n_2} \frac{1}{n_2^2} c_{n_2}^2 \sum_{|k_4|+|k_5|+|k_6| \lesssim n_1}
c_{k_4} c_{k_5} c_{k_6} .
\]
Finally we use the envelope maximal bound for the $k_4$
summation relative to $0$
and for $k_5$ relative to $k_6$ to get 
\[
S \lesssim c_0^2 \sum_{n_1 \leq n_2} \frac{n_1^2}{n_2^2} c_{n_2}^2  c_{\leq n_1}^2 
\lesssim 
 c_0^2 \sum_{ n_2}  c_{n_2}^2  c_{\leq n_2}^2 
\lesssim c_0^2,
\]
which suffices.

\medskip

B2b(ii). If instead 
\[
|k_4|+|k_5|+|k_6| \gg n_1,
\]
then we can introduce $n_3$ as in \eqref{m3}, with 
$ n_1 \ll n_3 \leq n_2$. Applying Cauchy-Schwarz inequality for the 
pair of indices $(k_5,k_6)$, and  $(k_2,k_3)$ yields
\[
S \lesssim c_0  \sum_{n_1 \ll n_3 \leq n_2} 
\sum_{|k_4| < n_3} \frac{n_1}{n_2^2}
 c_{k_4} c_{n_2}^2 c_{n_3}^2 
 \approx c_0  \sum_{n_3 \leq n_2} 
\sum_{|k_4| < n_3} \frac{n_3}{n_2^2}
 c_{k_4} c_{n_2}^2 c_{n_3}^2 .
\]
At this stage we complete the argument by  
using the envelope maximal bound for the $k_4$
summation.

\bigskip

B3. $k_1 = 0$, $|k_0| \approx n_2 \gg n_1$. In this case we may assume that
\begin{equation}\label{gamma1c}
|k_2| \approx n_1, \quad |k_3| \approx n_2, \qquad |k_0-k_3| \approx n_1.
\end{equation}
Next we compare $n_2$ and $n$:

\medskip

B3a. $n_2 \ll n$. Retaining $k_0$ as a summation index, we first 
apply Cauchy-Schwarz inequality for the pair $(k_5,k_6)$ to obtain 
\[
\begin{aligned}
S \lesssim & \ c_0 \sum_{n_1 \ll n_2  \ll n} \sum_{|k_0| \approx n_2} \sum_{|k_2| \approx n_1}
\sum_{|k_0 - k_3| \approx n_1} \sum_{|k_4| \lesssim n} 
\frac{1}{n_1 n_2 n}c_{k_2} c_{k_3} c_{k_4} c_n^2
\\
\lesssim & \ c_0 \sum_{n_1 \ll n_2  \ll n} \sum_{|k_3| \approx n_2} \sum_{|k_2| \approx n_1}
 \sum_{|k_4| \lesssim n} 
\frac{1}{ n_2 n}c_{k_2} c_{k_3} c_{k_4}c_n^2
\\
= & \ c_0 \sum_{n_2  \ll n} \sum_{|k_3| \approx n_2} \sum_{|k_2| \ll n_2}
 \sum_{|k_4| \lesssim n} 
\frac{1}{ n_2 n} c_{k_2} c_{k_3} c_{k_4}c_n^2.
\end{aligned}
\]
Now we use  the envelope maximal bound for  $k_4$ relative to $0$
and for $k_2$ relative to $k_3$.
This yields
\[
S \lesssim c_0^2 \sum_{n_2  \ll n}
\sum_{|k_3| \approx n_2} 
c_{k_3}^2 c_n^2
\approx c_0^2.
\]
\bigskip

B3b. $n = n_2$. In this case we dispense with $k_0$ as a summation index,
retaining instead the relation 
\[
| k_3 - k_4 + k_5-k_6 | \lesssim n_1.
\]
At least one of the frequencies $k_4,k_5,k_6$ must have size $n_2$,
say $|k_6| \approx n_2$. Then we use Cauchy-Schwarz inequality for the 
$(k_3,k_6)$ pair, losing an $n_1$ factor due to the relation above, and arriving at 
\[
S \lesssim c_0 \sum_{n_1 \leq n_2 }  \sum_{|k_2| \approx n_1}
\sum_{|k_4|,|k_5| \leq n_2}  
\frac{1}{n_2^2}c_{k_2} c_{k_4} c_{k_5} c_{n_2}^2
\approx 
c_0 
\sum_{|k_2|,|k_4|,|k_5| \leq n_2}  
\frac{1}{n_2^2}c_{k_2} c_{k_4} c_{k_5} c_{n_2}^2.
\]
Finally we use the envelope maximal bound for $k_2$ relative to $0$
and for $k_4$ relative to $k_5$
to obtain 
\[
S \lesssim  c_0^2   
c_{\leq n_2}^2  c_{n_2}^2 \lesssim c_0^2.
\]
This concludes the proof of the lemma.

\bigskip

B4. $k_2=0$. Here we can assume that 
\[
|k_1| \approx n_1, \qquad |k_3| \approx |n_2|,
\]
but the size of $k_0$ is both not set and not needed.
Instead, we will simply rely on \eqref{d6-k} and consider two subcases.

\bigskip

B4a. $n_2 \ll n$, where we can assume that \eqref{big-k} holds. Here we first use the maximal function
for $c_{k_1}$ to estimate
\[
S \leq c_0^2 \sum_{n_1 \leq  n_2 \ll n} \frac{1}{n_2n}  
c_{k_3} c_{k_4} c_{k_5} c_{k_6},
\]
where we retain the constraint relative to $k_3,k_4,k_5,k_6$,
\[
|\Delta^4 k | \lesssim k_1.
\]
Here we can fix $\Delta^4 k$ at the expense of another $n_1$ factor.
Then fixing $k_3$ and $k_4$ fixes the difference $k_5-k_6$, so applying Cauchy-Schwarz inequality with respect to $k_5$, $k_6$ we arrive at
\[
S \leq c_0^2 \sum_{n_1 \leq  n_2 \ll n} \frac{n_1}{n_2n}  
c_{k_3} c_{k_4} c_{n}^2.
\]
Finally, using H\"older's inequality for $k_3$ and $k_4$, which have size $n_2$, respectively $\leq h$ yields
\[
S \leq c_0^2 \sum_{n_1 \leq  n_2 \ll n} \frac{n_1}{n_2n}  
\sqrt{n_2 n} c_{n_2} c_{\leq n} c_{n}^2 \lesssim 
c_0^2 \sum_n c^2_{\leq n} c_{n}^2  \lesssim c_0^2.
\]

\bigskip

B4b. $n_2 \approx n$. Here the case $n_1 \approx n_2$
is straightforward, as we can directly apply once the Cauchy-Schwartz inequality for two size $n$ frequencies,
twice  H\"older's inequality and once the maximal function bound for the three remaining frequencies of size $\lesssim n$. We are left with the more interesting  case
when $n_1 \ll n$.
There, using again the  maximal function
for $c_{k_1}$ we estimate
\[
S \leq c_0^2 \sum_{n_1 \leq  n_2 \ll n} \frac{1}{n^2}  
c_{k_3} c_{k_4} c_{k_5} c_{k_6},
\]
where for the four remaining indices we have 
$|\Delta^4 k| \lesssim n_1 \ll n$. Here $|k_3| \approx n$, so there must be at least one other frequency of size $n$. Then, as in the previous case, we apply once Cauchy-Schwartz inequality for the two size $n$ frequencies, and twice  H\"older's inequality  for the two remaining frequencies of size $\lesssim n$. This concludes the proof of the lemma.

\end{proof}

\subsection{The energy estimate}
Our objective here is to prove the bound \eqref{uk-ee}. We remark that once 
this is proved, we may drop the $C^4$ factor in Lemma~\ref{l:B4-multi}.
By the Galilean invariance, it suffices to prove the desired bound \eqref{uk-ee} at $k = 0$. For this we consider a symbol $a(\xi_1,\xi_2)$ of the form 
\begin{equation}\label{choose-a}
a(\xi_1,\xi_2) = a_0(\xi_1) a_0(\xi_2),
\end{equation}
with $a_0$ localized near frequency $0$ on the unit scale.
Then
\[
\bM_a(u) = \| A_0(D) u\|_{L^2}^2,
\]
and we need to bound this quantity uniformly in time,
\begin{equation}\label{unif-0}
\bM_a(u) \lesssim c_0^2 \epsilon^2 .
\end{equation}
For this we use the density-flux relation \eqref{dens-flux-maG}
with $\xi_0 = 0$, which yields
\[
\frac{d}{dt} \ms_a(u) = \partial_x (P_a(u) + R^4_{m,a}
(u))
+ R^6_{m,a}(u),
\]
where 
\[
\ms_a(u,\bar u) = M_a(u,\bar u) + B^4_{m,a}(u).
\]
To prove \eqref{unif-0} we integrate the 
above density-flux relation in $t,x$ to obtain:
\begin{equation}\label{en-ident}
\left. \int M_a(u) + B^4_{m,a}(u) \, dx  \right|_0^T = \int_{0}^T \int_\R R^6_{m,a}(u) \ dx dt.
\end{equation}
Finally, we can estimate the contributions of $B^4_{m,a}$ and $R^6_{m,a}$
using Lemma~\ref{l:B4-multi}, respectively Lemma~\ref{l:R6-AB}.

\begin{remark}
For later use, we observe that once the energy bounds \eqref{uk-ee}
have been established, then they can be used instead of 
the bootstrap assumption \eqref{uk-ee-boot} in the proof
of Lemma~\ref{l:B4-multi}. This leads to a stronger form of 
\eqref{b4-ma}, \eqref{b4-pa}, with the constant $C$ removed:
\begin{equation}\label{B4-in-L1-re}
\| B^4_{m,A}(u)\|_{L^\infty_t L^1_x} + \| B^4_{p,A}(u)\|_{L^\infty_t L^1_{x}}
\lesssim  c_A^2 \epsilon^4.
\end{equation}
\end{remark}

\subsection{ The localized interaction Morawetz}
Our objective here is to prove the bounds \eqref{uk-se} 
and \eqref{uk-bi} using our bootstrap assumptions. 
By the Galilean invariance it suffices to do this 
at $k=0$. This will be achieved using our interaction Morawetz  identity \eqref{interaction-xi} with $v=u$ and with 
$a$ localized at frequency $0$, exactly as in \eqref{choose-a}.
For such $a$ we can simply set $\xi_0=0$.
It will suffice to estimate the quantities in \eqref{interaction-xi} as follows:

\begin{equation}\label{Ia-bound}
|\bI_{a}(u,u)| \lesssim \epsilon^4 c_0^4, 
\end{equation}
\begin{equation}\label{J4-formula}
  \bJ^4_{a}(u,u) \approx  \| \partial_x |A_0(D) u|^2\|_{L^2_x}^2   , 
\end{equation}
\begin{equation}\label{J6-bound}
\int_0^T \bJ^6_a(u,u)\, dt  \approx  \|  A_0(D)^\frac23 u\|_{L^6_{t,x}}^6
+ O(\epsilon^5 C^6 c_0^4) ,
\end{equation}
\begin{equation}
\int_0^T \bJ^8_a(u,u)\, dt = O(\epsilon^5  C^6 c_0^4), 
\end{equation}
\begin{equation}
\int_0^T\bK^8_a(u,u)\, dt = O(\epsilon^5 C^8  c_0^4). 
\end{equation}
This allows  us to estimate the localized interaction Morawetz term, as well as the localized $L^6$ norm
as in \eqref{uk-se} and \eqref{uk-bi}, provided that $\epsilon$ is small enough.
 There is nothing to do for $\bJ^4_{a}$ so we consider the remaining contributions:

\subsubsection{The $\bI_a$ bound}
The interaction Morawetz functional
$\bI_a$ is as in \eqref{Ia-sharp-def}, with $\xi_0=0$,
\begin{equation}\label{Ia-sharp-def-re}
\bI_{a} =   \iint_{x > y} \ms_a(u)(x) \ps_{a}(v) (y) -  
\ps_{a}(u)(x) \ms_{a}(v) (y)\, dx dy
\end{equation}
with
\[
\ms_a(u)= M_a(u) + B^4_{m,a}(u),
\qquad 
\ps_a(u)= P_a(u) + B^4_{p,a}(u).
\]
For $B^4_{m,a}$ and $B^4_{p,a}$ we have the $L^{\infty}_tL^1_x$
bound \eqref{B4-in-L1-re}. For $M_a(u)$ and $P_a(u)$
we have the straightforward uniform in time bounds
\begin{equation}\label{M-L1}
\|M_a(u)\|_{L^\infty_t L^1_x} + \|P_a(u)\|_{L^\infty_t L^1_x} \lesssim \epsilon^2 c_a^2.
\end{equation}
Combining this with \eqref{B4-in-L1-re}, the estimate 
\eqref{Ia-bound} immediately follows.

\subsubsection{The $\bJ^6_a$ bound} This is a $6$-linear
expression whose expression we recall from \eqref{J6-def},
\begin{equation}\label{J6-def-re}
\bJ^6_{a} =  2 \int -( P_{a} B^4_{p,a}
+ P_{a} R^4_{m,a}) + ( 
M_a R^4_{p,a} +E_{a} B^4_{m,a})\, dx ,
\end{equation}
where again we have set $\xi_0 = 0$.

We first discuss the symbol localization properties 
for $\bJ^6_a$ with respect to the six entries at frequencies
$k_1$, $k_2$, $k_3$, $k_4$, $k_5$ and $k_6$.
Here we a-priori have two frequencies
close to $0$, say $k_5 = k_6=0$, namely those arising from $M_a$, $P_a$ and $E_a$, all of which have smooth and bounded symbols. In the symbols for $B^4_a$ and $R^4_a$, on the other hand, we have at least one frequency equal to zero, say $k_1=0$, and the near-diagonal property $\Delta^4 k= 0$.

Next we consider the size of the symbols, where we use 
Proposition~\ref{p:symbols}. This gives the following symbol
bounds regardless of the $p$ or $m$ index:
\[
|b^4_a| \lesssim \frac{1}{\la \delta k^{hi}\ra \la \delta k^{med}\ra}, \qquad 
|r^4_a| \lesssim \frac1{\la \delta k^{med}\ra},
\]
and similarly for their derivatives.
We split the analysis in two cases, depending on whether
all frequencies are equal (i.e. $\delta k^{hi} \lesssim 1$)
or not.
\bigskip

\emph{A. The case of separated
frequencies, $\delta k^{hi} \gg 1$.} To fix the notations,
suppose that $|k_2| \approx \delta k^{med} \approx n_1$
and $|k_3| \approx |k_4|\approx \delta k^{hi} \approx n_2$
where $n_1 \leq n_2$ represent dyadic scales.
Then we can apply two bilinear $L^2$ bounds \eqref{uk-bi-boot} for the frequency pairs $(k_1=0,k_4)$ and $(k_2,k_3)$ and simply estimate the $k_5$ and $k_6$
factors in $L^\infty$ by Bernstein's inequality. This yields
the bound for the corresponding portion of $J^6_a$
\[
\left|\int _0^T\bJ^{6,unbal}_a(u)\, dt \right| \lesssim
\epsilon^6 C^6 c_0^3 \sum_{k_2,k_3,k_4} \frac{1}{n_1 n_2} c_{k_2} c_{k_3} c_{k_4}.
\]
Since $k_4-k_3=k_2$, for fixed $k_2$ we can apply Cauchy-Schwartz inequality with respect to the $k_3$ and $k_4$ indices
to obtain
\[
\left|\int_0^T\bJ^{6,unbal}_a(u)\, dt \right| \lesssim
\epsilon^6 C^6 c_0^3 \sum_{|k_2| \approx n_1 \leq n_2} \frac{1}{n_1 n_2} c_{k_2} c_{n_2}^2.
\]
Finally, using the maximal function property for $c_{k_2}$
we arrive at
\[
\left|\int_0^T\bJ^{6,unbal}_a(u)\, dt\right| \lesssim
\epsilon^6 C^6 c_0^4 \sum_{n_2} \frac{\log n_2}{ n_2}  c_{n_2}^2,
\]
which suffices.
\bigskip

\emph{B. The case of equal
frequencies, $\delta k^{hi} \lesssim 1$.} Here 
we have $|k_j| \lesssim 1$ for all $j$, and the symbol
of $j^6_a$ is smooth and bounded.
The important feature here is the symbol of the 6-linear form $J^6_0$ on the diagonal
\[
\{ \xi_1 = \xi_2 = \xi_3 = \xi_4 = \xi_5 = \xi_6 \},
\]
which we would like to be positive. But we know this by \eqref{good-J6}, which shows that this equals
\[
j^6_a(\xi) = a_0^4(\xi) c(\xi, \xi, \xi).
\]

It follows that  we can write the symbol $j^6_a$ in the form
\[
j^6_a(\xi_1,\xi_2,\xi_3,\xi_4,\xi_5,\xi_6) = b_0(\xi_1) b(\xi_2) b(\xi_3) b(\xi_4) b(\xi_5) b(\xi_6)  + 
j^{6,rem}_a(\xi_1,\xi_2,\xi_3,\xi_4,\xi_5,\xi_6) ,
\]
where $b_0(\xi) = a_0(\xi)^\frac23 c(\xi, \xi, \xi)^\frac16$ and $j^{6,rem}_0$
vanishes when all $\xi$'s are equal. Then we can write
$j^{6,rem}_0$ as a linear combination of terms $\xi_{even} -\xi_{odd}$ with smooth coefficients. The first term 
yields the desired $L^6$ norm,
\[
\bJ^6_a(u) = \| B_0(D) u\|_{L^6_x}^6 + \bJ^{6,rem}_a.
\]
On the other hand the contribution   $\bJ^{6,rem}_a$ of the second term  be estimated using a bilinear $L^2$ bound \eqref{uk-bi-boot}, 
three $L^6$ bounds \eqref{uk-se-boot} and one $L^\infty$ via Bernstein's inequality,
\[
\left|\int_0^T\bJ^{6,rem}_a(u)\, dt \right| \lesssim \|J^{6,rem}_a(u)\|_{L^1_{t,x}}
\lesssim 
(C \epsilon^2 c_0^2) C^3(\epsilon c_0)^2 C \epsilon c_0 = C^5 \epsilon^5
c_0^5,
\]
which suffices.

\subsubsection{The bound for $\bJ^8_0$}
We recall that $\bJ^8_0$ has an expression of the form
\begin{equation}\label{J8-def-re}
\bJ^8_{a} =   \int
B^4_{m,a}(u) R^4_{p,a}(u) - R^4_{m,a}(u) B^4_{p,a}(u)
+ B^4_{m,a}(u) R^4_{p,a}(u) - R^4_{m,a}(u) B^4_{p,a}(u)
 \, dx, 
\end{equation}
see \eqref{J8-def} where we set $\xi_0 = 0$.
For this we need to show that 
\[
\left| \int_0^T\bJ^8_a\, dt\right| \lesssim \epsilon^6 c_0^4.
\]
This is an $8$-linear term which
has two factors, both of which 
are $4$-linear terms with output 
at frequency $0$ and one factor 
at frequency $0$. But the symbols are 
not the same, i.e. we have more decay in $B^4_a$ than  in $R^4_a$.

As usual, we localize the entries of $\bJ^8_a$ on the unit frequency scale and estimate each term separately.
We denote the four frequencies in $B^4_a$ by $k_1,k_2,k_3.k_4$
with $k_1 = 0$, and the four frequencies in $R^4_a$ by $l_1,l_2,l_3,l_4$ with $l_1=0$. These are constrained by the relations
$\Delta^4 k = 0$, $\Delta^4 l=0$. In addition, their symbols are bounded, along with their derivatives, as follows:
\[
|b^4_{a}| \lesssim \frac{1}{\la \delta k^{hi}\ra \la\delta k^{med}\ra},
\qquad |r^4_a| \lesssim \frac{1}{\la \delta l^{med}\ra}.
\]
 We consider several cases:

\bigskip

A) All eight frequencies are close to zero. Then we use six $L^6_{t,x}$ Strichartz bounds as in \eqref{uk-se-boot}, and two  $L^\infty$ bounds obtained from the energy via Bernstein's 
inequality.

\bigskip 

B) Some frequencies are away from zero. Denote by $n_1 \leq n_2$ the dyadic separations for the $k_j$ frequencies in  $B_4$, and by $o_1 \leq o_2$ the dyadic separations for the $l_j$ frequencies in $R_4$. We consider two cases depending on how $n_2$ and $o_2$ compare.

\bigskip

B1) $n_2 \lesssim  o_2$. Then the $R^4$ frequencies are in two $o_2$ separated clusters with distance below $o_1$ within each cluster. We use two bilinear $L^2$ bounds there, and $L^\infty$ bounds for all the $B^4_a$  factors 
to estimate
\[
\left|\int_0^T \bJ^8_a(u)\, dt\right| \lesssim \epsilon^8 C^6 c_0^2 \sum \frac{1}{n_1 n_2}
c_{k_2} c_{k_3} c_{k_4} \frac{1}{o_1 o_2} c_{l_2} c_{l_3} c_{l_4} .
\]
Suppose $k_2$ and $l_2$ are the smaller frequencies in each group, so that $|k_2| \approx n_1$ and $|l_2| \approx o_1$.  For fixed $k_2$ respectively $l_2$ we apply the Cauchy-Schwarz inequality for the pairs $(k_3,k_4)$, respectively $(l_3,l_4)$. We obtain 
\[
\left|\int_0^T \bJ^8_a(u)\, dt\right| \lesssim \epsilon^8 C^6 c_0^2 \sum \frac{1}{n_1 n_2}
c_{k_2} c_{n_2}^2 \frac{1}{o_1 o_2} c_{l_2} c_{o_2}^2  .
\]
Now we use the maximal function to also fix $k_2$ and $l_2$,
\[
\left|\int_0^T \bJ^8_a(u)\, dt\right| \lesssim \epsilon^8 C^6 c_0^4 \sum_{n_2 \leq o_2} \frac{\log n_2}{ n_2}
 c_{n_2}^2  \frac{\log o_2}{ o_2}
 c_{o_2}^2 \lesssim \epsilon^8 C^6 c_0^4.
\]

\bigskip

B2) $o_2 \ll  n_2$. Here we proceed exactly as before
but  using instead two bilinear $L^2$ bounds in $B^4_a$.
Following the same steps, we arrive at
\[
\left|\int_0^T\bJ^8_a(u)\, dt \right| \lesssim \epsilon^8 C^6 c_0^4 \sum_{o_2 \leq n_2} \frac{\log n_2}{ n_2^2}
 c_{n_2}^2 \log o_2
 c_{o_2}^2 \lesssim \epsilon^8 C^6 c_0^4.
\]
Here the denominators are unbalanced compared to the previous case, but in a favourable way.

\subsubsection{The bound for $\bK^8_a$} 
We recall that $\bK^8_a$ has the form
\begin{equation}\label{K8-def-re}
\begin{aligned}
\bK^8_a(u) = \iint_{x > y}  &\ \ms_a(u)(x)  R^6_{p,a}(u)(y)  + \ps_{a}(u)(y) R^6_{m,a}(u)(x) 
\\
& \ -
\ms_a(u)(y)  R^6_{p,a}(u)(x)  - \ps_{a}(u)(x) R^6_{m,a}(u)(y) \, dx dy.
\end{aligned}
\end{equation}
The time integral of $\bK^8_a(u)$ is estimated directly using the $L^1_{t,x}$ bound for $R^6$ in Lemma~\ref{l:R6-AB} and the uniform $L^1_x$ bound
for $\ms$ and $\ps$, provided by  Lemma~\ref{l:B4-multi} 
together with the simpler bound \eqref{M-L1}.

\subsection{Near parallel interactions}

Here we briefly discuss the bilinear $L^2$ bound \eqref{uab-bi}
in the case when the sets $A$ and $B$ are of size $\lesssim 1$
and at distance $\lesssim 1$. This can be viewed on one hand as a slight generalization of the argument in the previous subsection, where instead of $v = u$ we take $v = u(\cdot +x_0)$. The only difference in the proof is that, because of the translations, we can no longer use the defocusing property to control the sign of the diagonal $\bJ^{6}$ contribution. 
However, this is not a problem because the localized $L^6$ norm of $u_k$ has already been estimated in the previous subsection.

\subsection{The transversal bilinear \texorpdfstring{$L^2$}{}  estimate} 
Here we prove the bilinear $L^2$ bound \eqref{uab-bi}.
This repeats the same analysis as before, but
using the interaction Morawetz functional  associated to two separated frequency intervals $A$ and $B$, of size at most $n$ and  with $n$ separation. Here we no longer take $v = u$, and instead we let $v = u(\cdot+x_0)$. The parameter $x_0 \in \R$
is arbitrary and the estimates are uniform in $x_0$.

Since $x_0$ does not play any role in the analysis, we simply drop it from our notations. To further simplify the notations
in what follows, we take advantage of the Galilean invariance to translate the problem in frequency so that $0$ is roughly  half-way between the intervals  $A$ and $B$.
This will allow us to set $\xi_0=0$ in \eqref{interaction-bi},
and to assume that both $A$ and $B$ are within distance $n$
from the origin.
We consider mass $m_a$, $m_b$ and momentum forms $p_a$, $p_b$, where $a$ and $b$
are  bump functions, smooth on the
unit scale, selecting the sets $A$ and $B$. 

The  interaction functional  
  takes the form (see \eqref{interaction-bi})
\begin{equation}\label{interaction-bi-re}
 \bI_{AB}(u,v) = \iint_{x > y} \ms_a(u)(x) \ps_{b}(v)(y)    - \ps_{a}(u)(x) \ms_b(v)(y) \,dx dy .
\end{equation}
Its time derivative is given, see \eqref{interaction-xi}, by
\begin{equation}\label{interaction-xi-re}
\frac{d}{dt} \bI_{AB} =  \bJ^4_{AB} + \bJ^6_{AB} + \bJ^8_{AB} + \bK^8_{AB} .
\end{equation}
Following the same pattern as in the earlier case of the localized interaction Morawetz case, we will estimate each of these
terms as follows:
\begin{equation}\label{IAB-bound}
|\bI_{AB}(u,v)| \lesssim  n \epsilon^4 c_A^2 c_B^2  ,
\end{equation}
\begin{equation}\label{J4AB-formula}
  \bJ^4_{AB}(u,v) \approx  \| \partial_x (u_A \bv_B)\|_{L^2_x}^2   , 
\end{equation}
\begin{equation}\label{J6AB-bound}
\left|\int_0^T\bJ^6_{AB}\, dt \right| \lesssim n (\epsilon^6 C^6 + \epsilon^4) c_A^2  c_B^2 ,
\end{equation}
\begin{equation}\label{J8AB-bound}
\left|\int _0^T\bJ^8_{AB}\, dt\right| \lesssim n \epsilon^8  C^8  c_A^2 c_B^2 ,
\end{equation}
\begin{equation}\label{K8AB-bound}
\left| \int_0^T \bK^8_{AB}\, dt \right| \lesssim n \epsilon^6  C^8  c_A^2 c_B^2 .
\end{equation}

\subsubsection{ The fixed time estimate for $\bI_{AB}$}
Here we prove the bound \eqref{IAB-bound}, which is a consequence
of fixed time $L^1$ estimates for the energy densities, namely
\begin{equation}\label{MPA-bd}
\| \ms_a(u)\|_{L^1_x} \lesssim \epsilon^2 c_A^2, 
\qquad
\| \ps_a(u)\|_{L^1_x} \lesssim n \epsilon^2 c_A^2,
\end{equation}
and the similar estimates with $a$ replaced by $b$ and $u$ replaced by $v$. 
This is obvious for the quadratic part of the above densities,
where we note that the $n$ factor for the momentum bound arises due to the distance $o(n)$ between the set $A$ and the origin. 
It remains to consider the  quartic terms, where we can use Lemma~\ref{l:B4-multi} together with Corollary~\ref{c:B4-multi}.

\subsubsection{ The bound for $\bJ^6_{AB}$}
Here we prove the bound for $\bJ^6_{AB}$ in \eqref{J6AB-bound}. 
We recall that $\bJ^6_{AB}$ has the form
\[
\bJ^6_{AB} = \int M_a(u) R^4_{p,b}(v) - P_b(v) R^4_{m,a}(u)
+ B^4_{m,a}(u) E_b(v) - B^4_{p,b}(v) P_a(u) - \text{symmetric}
\, dx,
\]
where the symmetric term is obtained by interchanging the indices $a$
and $b$, and also $u$ and $v$. The symbols for the $M$, $P$ and $E$ factors have size 
$1$, $n$ and $n^2$ respectively, with a similar balance between the $B^4_m$
and $B^4_p$ terms, respectively   the $R^4_m$
and $R^4_p$ terms. So it suffices to consider one $R^4$ term
and one $B^4$ term.

\bigskip

A) The $B^4$ term $B^4_{m,a}(u) E_b(v)$. Here we denote by $l_1,l_2$ the $E_b$ frequencies 
and by $k_1,k_2,k_3, k_4$ the $B^4_{m,a}$ frequencies 
where 
\[
 \Delta^2 l + \Delta^4 k  = 0 .
\]
The symbol for $E_b$ has size $n^2$, with both frequencies in $B$. The symbol 
for $B^4_{m,a}(u)$ has size $( \la \delta k^{med}\ra \la \delta k^{hi}\ra)^{-1}$ and support 
in the region where $|\Delta^4 k| \ll 1 + k^{med}$, and at least one of the frequencies is in $A$. We denote the dyadic sizes of $k^{med}$ and $k^{hi}$ by $n_1 \leq  n_2$. Without any loss in generality we may assume that
$k_1,k_2,k_3,k_4$ are chosen so that 
\begin{equation}\label{which-k-R}
k_1 \in A, \qquad 
|k_1-k_2| \approx n_1, \quad |k_1-k_3| \approx n_2, \qquad 
|k_1-k_4| \approx n_2, \quad
|k_3-k_4| \approx n_1.
\end{equation}

Depending on the size of $n$ relative to $n_1,n_2$ we consider two cases:

\bigskip

A1) $ n_2 \ll n$. Since $A$ and $B$ are $n$-separated, within the set of six frequencies we can find two pairs of $n$ -separated frequencies. Then we can apply twice the bilinear $L^2$ bound and estimate the remaining factors in $L^\infty$. We arrive at the frequency envelope bound
\[
\left|\int_0^T\bJ^6_{AB}\, dt\right| \lesssim  \epsilon^6 C^6 n  \sum \frac{1}{n_1 n_2}
c_{l_1} c_{l_2} c_{k_1} c_{k_2} c_{k_3} c_{k_4},
\]
where the summation indices are restricted as discussed above. 
Then, applying the Cauchy-Schwarz inequality 
for the pair $(l_1,l_2)$  we obtain
\[
\begin{aligned}
\left|\int_0^T\bJ^6_{AB}\, dt \right| \lesssim & \  \epsilon^6 C^6 n  \sum \frac{1}{n_1 n_2} 
\sum_{|\Delta^4 k| < n_1} c_{k_1} c_{k_2} c_{k_3} c_{k_4} 
 \sum_{l_1,l_2 \in B}^{\Delta l =- \Delta^4 k} c_{l_1} c_{l_2}
\\
\lesssim & \  \epsilon^6 n C^6 c_B^2 \sum \frac{1}{ n_1 n_2} 
\sum_{|\Delta^4 k| < n_1} c_{k_1} c_{k_2} c_{k_3} c_{k_4}.
\end{aligned}.
\]
Hence it remains to estimate the last sum above as follows:
\begin{equation}\label{S-bd}
S_A: =   \sum_{n_1 < n_2} \frac{1}{ n_1 n_2} 
\sum_{D} c_{k_1} c_{k_2} c_{k_3} c_{k_4} \lesssim c_A^2,  
\end{equation}
where the summation set $D$ is described by \eqref{which-k}.
In this estimate the parameter $n$ no longer appears.
Recalling that $k_1 \in A$, we fix $k_1$ and split 
\[
S_A  = \sum_{k_1 \in A} c_{k_1} S_{k_1}, \qquad 
S_{k_1}:=\sum_{D}\sum_{n_1 < n_2} \frac{1}{ n_1 n_2} 
\sum_{D}  c_{k_2} c_{k_3} c_{k_4}.
\]
Then it suffices to show that 
\[
S_{k_1} \lesssim c_{k_1},
\]
which is exactly the bound \eqref{fix-k1} proved earlier.

\medskip

A2) $ n_2 \gtrsim n$. This time, within the set of four $k$ frequencies we can find two pairs of $n_2$ -separated frequencies. Applying twice the bilinear $L^2$ bound and estimating the remaining factors in $L^\infty$ we arrive at 
\[
\left|\int_0^T\bJ^6_{AB}\, dt\right| \lesssim  \epsilon^6 C^6 n^2  \sum \frac{1}{n_1 n_2^2}
c_{l_1} c_{l_2} c_{k_1} c_{k_2} c_{k_3} c_{k_4},
\]
Applying the Cauchy-Schwarz inequality 
for the pair $(l_1,l_2)$  now yields
\[
\begin{aligned}
\left|\int_0^T\bJ^6_{AB}\, dt \right| \lesssim & \  \epsilon^6 C^6 n^2 c_B^2 \sum \frac{1}{n_1 n_2^2} 
\sum_{|\Delta^4 k| < n} c_{k_1} c_{k_2} c_{k_3} c_{k_4} 
\end{aligned}.
\]
Since $n \lesssim n_2$, we can conclude again using the bound \eqref{S-bd} which was already proved in (A1).

\bigskip

B. The $R^4$ terms are also all similar, so to fix the notations we will discuss the expression $P_b(u) R^4_{m,a}(u)$. We denote again the six frequencies by $l_1,l_2$ for $P_b$, respectively by $k_1,k_2,k_3, k_4$  for $B^4_{m,a}$. The symbol of $p_B$ is supported in $B \times B$ and has size 
$n$. The symbol $R^4_{m,a}$ has size 
\[
|r^4_{m,a}([k])| \lesssim \frac{n + \delta k^{hi}}{ \la \delta k^{med}\ra \la\delta k^{hi}\ra}.
\]
The bound for the portion containing the $n$ term in the denominator is 
identical to the one in case A, so in the sequel we dismiss this term 
and simplify the above bound to 
\[
|r^4_{m,a}([k])| \lesssim \frac{1}{ \delta k^{med}}.
\]
Retaining the notations $n_1 \leq n_2 $ for the dyadic sizes of $\delta k^{med}$ and $\delta k^{hi}$, we may also restrict our analysis to the case when $n_2 \gg n$. This is similar to case A above. We get the  better $n_2^{-1}$
factor from the bilinear $L^2$ bounds, which allows us to reduce the problem to proving exactly the  bound \eqref{S-bd}, but for a larger set of indices
\begin{equation}\label{which-k-extra}
k_1 \in A, \qquad 
|k_1-k_2| \lesssim n_1, \quad |k_1-k_3| \approx n_2, \qquad 
|k_1-k_4| \approx n_2, \quad
|k_3-k_4| \lesssim n_1.
\end{equation}
But this still follows from \eqref{fix-k1}.

\subsubsection{ The bound for $\bJ^8_{AB}$}
Here we prove the bound \eqref{J8AB-bound}.
We recall that $\bJ^8_{AB}$ has the form
\[
\bJ^8_{AB} = \iint B^4_{m,a}(u) R^4_{p,b}(v) - B^4_{p,b} (v) R^4_{m,a}(u)
+ B^4_{m,b}(v) R^4_{p,a}(u) - B^4_{p,a}(u) R^4_{m,b}(v) 
\, dxdt.
\]
All terms here are similar, so it suffices to consider the first one. To avoid a lengthy proof  which would largely repeat the arguments in the proof of \eqref{J6AB-bound},
we make a simple observation, namely that the proof of 
the bound for this term becomes a corollary of the previous bound if we can establish a representation
\[
B^4_{m,a}(u) \approx \sum_{l_1,l_2 \in A}  u_{l_1} w_{l_2},
\]
so that, for each $k$ which is $M$-separated from $l_2$,  the function $w_{l_2}$ satisfies a bilinear $L^2$ bound of the form 
\begin{equation}\label{vl-uk}
\| w_{l_2} u_k \|_{L^2_{t,x}} \lesssim M^{-\frac12} C^4 \epsilon^4 c_{l_2} c_{k} .
\end{equation}
If that is true, then $w_{l_2}$ would play exactly the role 
of $u_{l_2}$ in the $\bJ^6_{AB}$ estimate.

Indeed, we may represent 
\[
B^4_{m,a}(u) = \sum_{l_1,l_2 \in A}  \sum_{k_2 - k_3+k_4 = l_2}
 B^4_{m,a}(u_{l_1},u_{k_2},u_{k_3}, u_{k_4}) .
\]
Here the symbol for $B^4_{m,a}$ and its derivatives have size $\lesssim \dfrac{1}{n_1 n_2}$ in a unit region around frequency $(l_1,k_2,k_3,k_4)$. Hence, we may separate variables
and represent $B^4_{m,a}(u_{l_1},u_{k_2},u_{k_3}, u_{k_4})$
as the sum of a rapidly convergent series
\[
B^4_{m,a}(u_{l_1},u_{k_2},u_{k_3}, u_{k_4}) = 
\sum_{j} D^{j} u_{l_1} B^{4,j}_{m,a}(u_{k_2},u_{k_3}, u_{k_4}) : = \sum D^j u_{l_1} w^j_{l_2},
\]
where the symbols for $D^j$, respectively $B^{4,j}_{m,a} $
have unit size, respectively $\lesssim \frac{1}{n_1 n_2}$
with rapid decay in $j$. Then it remains to prove the estimate \eqref{vl-uk} for the functions $w^j_{l_2}$. 

Indeed, at least one of the $k$'s must be $M$-separated from $k$, so using a bilinear $L^2$ bound  we have 
\[
\| w^j_{l_2} u_k \|_{L^2} \lesssim M^{-\frac12} \epsilon^4 C^4  c_{k} j^{-10} 
\sum_{k_2 - k_3+k_4 = l_2} \frac{1}{n_1 n_2} c_{k_2} c_{k_3} c_{k_4}.
\]
It remains to estimate the last sum. Suppose $k_2$ is within distance $n_1$  from $l_2$, then we use the maximal function to estimate
\[
\sum_{k_2 - k_3+k_4 = l_2} \frac{1}{n_1 n_2} c_{k_2} c_{k_3} c_{k_4}
\lesssim c_{l_2} \sup_{k_2} \sum_{k_2 - k_3+k_4 = l_2} \frac{1}{n_2}  c_{k_3} c_{k_4}
\lesssim c_{l_2},
\]
as needed.

\subsubsection{ The bound for   $\bK^8_{AB}$} 
This is immediate by combining the bound 
\eqref{MPA-bd} with the $R^6$ bounds in 
Lemma~\ref{l:R6-AB} and Corollary~\ref{c:R6-AB}.

\section{Global bilinear and Strichartz estimates}

Our objective in this last section is to supplement the unit frequency scale bilinear $L^2$ and Strichartz estimates
with their more global counterparts:

\begin{theorem}
The global small data solutions $u$ for \eqref{nls}
in Theorem~\ref{t:boot} satisfy the following bounds:

\begin{itemize}
    \item Strichartz estimate:
    \begin{equation}
    \| u\|_{L^6_{t,x}}^6 \lesssim \epsilon^4,
    \end{equation}
    \item Bilinear $L^2$ bound:
    \begin{equation}\label{main-bi-re}
    \|\partial_x |u|^2 \|_{L^2_t H^{-\frac12}_x}^2 \lesssim \epsilon^4.
    \end{equation}
\end{itemize}
\end{theorem}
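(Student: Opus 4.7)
The plan is to establish the global bilinear $L^2$ bound by summing the unit-scale frequency-envelope estimates from Theorem~\ref{t:boot}, and then to deduce the $L^6$ Strichartz bound from the bilinear bound via a one-dimensional Sobolev inequality applied to $|u|^2$. The bilinear bound is the more fundamental of the two and I would establish it first.

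For the bilinear bound, I would use a dyadic Littlewood--Paley decomposition of $|u|^2$ in the output spatial frequency:
\begin{equation*}
\|\partial_x|u|^2\|_{L^2_t H^{-1/2}_x}^2 \approx \|\partial_x P_{\leq 1}|u|^2\|_{L^2_{t,x}}^2 + \sum_{n \geq 1 \text{ dyadic}} n \, \|P_{\approx n}|u|^2\|_{L^2_{t,x}}^2 ,
\end{equation*}
where $P_{\approx n}|u|^2$ gathers the bilinear contributions $u_{k_1}\bar u_{k_2}$ with $|k_1-k_2| \approx n$. The low-frequency piece is controlled by the diagonal Morawetz bound \eqref{uk-bi}: writing $P_{\leq 1}|u|^2 \approx \sum_k P_{\leq 1}|u_k|^2$ plus nearly-diagonal pieces and using $\|\partial_x|u_k|^2\|_{L^2_{t,x}} \lesssim \epsilon^2 c_k^2$ together with $\|c\|_{\ell^2}^2 \approx 1$ delivers the $\epsilon^4$ bound after a triangle inequality. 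For the dyadic scales $n \geq 1$, the transversal bilinear bound \eqref{uab-bi} at unit pairs yields $\|u_{k_1}\bar u_{k_2}\|_{L^2_{t,x}} \lesssim \epsilon^2 c_{k_1}c_{k_2}n^{-1/2}$, and I would then sum these contributions over pairs with $|k_1-k_2|\approx n$, carefully exploiting $L^2$-orthogonality among unit-scale output subintervals inside the dyadic block and Cauchy--Schwarz in $(k_1,k_2)$ against the $\ell^2$ normalization of $c$. This gives $n\|P_{\approx n}|u|^2\|_{L^2_{t,x}}^2 \lesssim \epsilon^4 \tilde c_n^2$ for a suitable $\ell^2$ envelope $\tilde c_n$, and summing in $n$ closes the bilinear estimate.

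For the Strichartz bound, I would invoke the one-dimensional Sobolev inequality
\begin{equation*}
\|f\|_{L^3_x}^3 \lesssim \|f\|_{L^1_x} \|f\|_{\dot H^{1/2}_x}^2 ,
\end{equation*}
which follows in 1D from the embedding $\dot H^{1/2}(\R)\hookrightarrow \mathrm{BMO}$ combined with the standard interpolation $\|f\|_{L^3} \lesssim \|f\|_{L^1}^{1/3}\|f\|_{\mathrm{BMO}}^{2/3}$. Applying this pointwise in time to $f(t,\cdot)=|u(t,\cdot)|^2$, integrating in time and using H\"older,
\begin{equation*}
\|u\|_{L^6_{t,x}}^6 = \int \||u(t)|^2\|_{L^3_x}^3 \, dt \lesssim \||u|^2\|_{L^\infty_t L^1_x} \||u|^2\|_{L^2_t \dot H^{1/2}_x}^2 = \|u\|_{L^\infty_t L^2_x}^2 \|\partial_x|u|^2\|_{L^2_t \dot H^{-1/2}_x}^2.
\end{equation*}
Combined with the uniform mass bound $\|u\|_{L^\infty_t L^2_x}\lesssim \epsilon$ and the bilinear bound just proved, this gives $\|u\|_{L^6_{t,x}}^6 \lesssim \epsilon^2 \cdot \epsilon^4 = \epsilon^6$, which is in particular stronger than the claimed $\epsilon^4$ bound.

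The main obstacle is the summation step in the bilinear bound. The unit-scale estimate supplies the critical factor $\langle k_1-k_2\rangle^{-1/2}$ which exactly matches the $\dot H^{1/2}$ weight; a naive triangle inequality in the inner sum produces the autocorrelation $\sum_{k_1} c_{k_1}c_{k_1-m}$, which is only $\ell^\infty$-bounded by $\|c\|_{\ell^2}^2 = 1$ and is not $\ell^2$-summable in $m$, hence insufficient. The key is therefore to preserve the $\ell^2$ structure of $c$ throughout by fully using $L^2$-orthogonality on the output frequency at the unit scale within each dyadic block, and by separating diagonal from transversal contributions in a Whitney-type fashion so that \eqref{uk-bi} controls the former and \eqref{uab-bi} controls the latter.
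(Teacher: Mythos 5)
Your proposal has two genuine gaps, one in each half.

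\textbf{The $L^6$ bound.} You deduce the Strichartz estimate from the bilinear bound via $\|f\|_{L^3}^3 \lesssim \|f\|_{L^1}\|f\|_{\dot H^{1/2}}^2$ with $f = |u|^2$, which requires $\|\partial_x|u|^2\|_{L^2_t \dot H^{-1/2}_x}$ in the \emph{homogeneous} norm. But what the theorem asserts and what your (or the paper's) bilinear argument could at best supply is the \emph{inhomogeneous} bound $\|\partial_x|u|^2\|_{L^2_t H^{-1/2}_x}$. These differ exactly where it hurts: at output frequencies $|\xi|\lesssim 1$ of $|u|^2$, the homogeneous norm carries a weight $|\xi|$ while the inhomogeneous one carries $|\xi|^2/\la\xi\ra$, and nothing in the available estimates controls $\int_{|\xi|\ll 1}|\xi|\,|\widehat{|u|^2}(\xi)|^2\,d\xi$. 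Since $|u|^2\geq 0$ with $\widehat{|u|^2}(0)=\|u\|_{L^2}^2>0$, the very-low-frequency content of $|u|^2$ is not small, and the diagonal Morawetz bound \eqref{uk-bi} only controls $\partial_x|u_k|^2$, i.e.\ one full derivative, not half a derivative, at frequencies $\lesssim 1$. The paper avoids this entirely: it expands $\int|u|^6$ as a six-linear sum and treats the near-diagonal frequency block \emph{directly} using the localized $L^6$ envelope bound \eqref{uk-se}, $\|u_k\|_{L^6_{t,x}}\lesssim (\epsilon c_k)^{2/3}$, giving $\epsilon^4\sum_k c_k^4\lesssim\epsilon^4$; your argument never invokes \eqref{uk-se} at all, and the $\epsilon^4$ in the theorem in fact comes from that diagonal piece, not from the bilinear bound.

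\textbf{The bilinear bound.} You correctly identify the autocorrelation obstruction, but the proposed remedy (unit-scale output orthogonality plus the \emph{unit-pair} version of \eqref{uab-bi}) does not close: by almost-orthogonality among unit output sub-intervals $m$ with $|m|\approx n$, and triangle inequality in $k_1$ for each fixed $m$, one obtains $\|P_{\approx n}|u|^2\|_{L^2_{t,x}}^2\lesssim \epsilon^4 n^{-1}\sum_{|m|\approx n}(\sum_{k} c_k c_{k-m})^2$, and the inner sum is only $O(1)$ in $\ell^\infty_m$, so the $m$-sum costs a factor of $n$; hence $n\|P_{\approx n}|u|^2\|_{L^2}^2$ is $O(\epsilon^4 n)$, which is not summable. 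The paper does something materially different: it expands $\|w_n\|^2$ as a \emph{quartic} form in four frequency intervals $A_1,\dots,A_4$ of size $\approx n$, introduces a \emph{second} dyadic parameter $n_0\geq n$ recording the maximal pairwise separation, applies two bilinear bounds from the full \emph{interval} version of \eqref{uab-bi} (which yields $c_{A}c_{B}=(\sum_{k\in A}c_k^2)^{1/2}(\sum_{k\in B}c_k^2)^{1/2}$ rather than a sum of unit-pair products) to gain $n/n_0$, and then Cauchy--Schwarz plus a double dyadic summation in $(n,n_0)$. The two-parameter dyadic structure and the interval version of the transversal estimate are both essential; without them the ``Whitney-type'' separation you gesture at does not rescue the $\ell^2$ bookkeeping.
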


\begin{proof}
We successively consider the two estimates:
\bigskip

\emph {A. The global \texorpdfstring{$L^6$}{} bound.}
We prove the global $L^6$ bound using  the previous localized estimates. We aim to estimate the integral 
\[
I = \iint_{\R \times \R} |u|^6 \, dx dt
\]
by taking a suitable frequency decomposition. Given six unit frequency regions indexed by $k_1$, $k_2$, $k_3$, $k_4$, $k_5$
and $k_6$, they can only contribute to the above integral 
iff $\Delta^6 k = 0$. We divide them as follows:
\begin{enumerate}
    \item The diagonal case $|k_i - k_j| \lesssim 1$.

\item The nondiagonal case. we index these frequencies 
by the dyadic size $n \gg 1$ of the set of frequencies, 
i.e. so that 
\[
\max |k_i-k_j| \approx n.
\]
Within this range, we organize frequencies in intervals $A_1, \cdots A_6$ of size $n/100$. Of these intervals, at least two pairs must be $n$-separated in order to contribute to the above integral.
\end{enumerate}
Based on this, we split $I$ as 
\[
I = I_0 + \sum_{n} I_n,
\]
where
\[
I_0 = \sum_{|k_i - k_j| \lesssim 1 } \iint u_{k_1} \bu_{k_2}
u_{k_3} \bu_{k_4} u_{k_5} \bu_{k_6} \, dx dt,
\]
and 
\[
I_n = \sum \iint u_{A_1} \bu_{A_2}
u_{A_3} \bu_{A_4} u_{A_5} \bu_{A_6} \, dxdt,
\]
where the last sum is indexed over the sets $A_j$ of size $n/100$, with largest 
distance $\approx n$ and at least two distances $\geq n/10$.

For the diagonal part we use the $L^6$ bound \eqref{uk-se}
to estimate
\[
|I_0| \lesssim \epsilon^4 \sum_k c_k^4 \lesssim \epsilon^4,
\]
which suffices.

For the off-diagonal part we apply two bilinear $L^2$
bounds for the separated intervals (gaining $n^{-\frac12}$ each time) and two  $L^\infty$ bounds via Bernstein's inequality 
(losing $n^\frac12$ each time) to bound the corresponding term by
\[
|I_n| \lesssim \epsilon^6 \sum c_{A_1} c_{A_2} c_{A_3} c_{A_4} c_{A_5} c_{A_6}.
\]
We retain only the separated parts and apply Cauchy-Schwarz inequality
to estimate 
\[
|I_n| \lesssim \epsilon^6 \sum_{d(A_1,A_2) > n/10} 
c_{A_1}^2 c_{A_2}^2 \lesssim \epsilon^6 \sum_{|k_1-k_2| \approx n} c_{k_1}^2 c_{k_2}^2.
\]
Then summation over $n$ yields 
\[
\sum_n |I_n| \lesssim \epsilon^6 \sum_{k_1,k_2}  c_{k_1}^2 c_{k_2}^2 \lesssim \epsilon^6,
\]
which again suffices.

\bigskip

\emph{B. The global bilinear \texorpdfstring{$L^2$}{} bound.}
Here we prove the estimate \eqref{main-bi-re}. 
Expanding relative to the dyadic difference $n$ of the two input frequencies we have
\[
\partial_x (|u|^2) = \partial_x w_0 + \sum_n \partial_x w_n,
\]
where
\[
w_0 = \sum_{|k_1-k_2| \lesssim 1}
u_{k_1} \bu_{k_2},
\]
\[
w_n = \sum_{|A_1|,|A_2| \approx n}^{d(A_1,A_2) \approx n} u_{A_1} \bu_{A_2}.
\]
We use \eqref{uk-bi} to estimate $w_0$ as 
\[
\|\partial_x w_0\|_{L^2_{t,x}}^2 \lesssim \epsilon^4 \sum_k c_k^4 \lesssim \epsilon^4. 
\]

On the other hand for $w_n$ we get 
\[
\|\partial_x w_n \|_{L^2_t H^{-\frac12}_x}^2 \lesssim  n
\sum_{|A_1|,|A_2| \approx n}^{d(A_1,A_2) \approx n}
\sum_{|A_3|,|A_4| \approx n}^{d(A_3,A_4) \approx n}
\int u_{A_1} \bu_{A_2}u_{A_3} \bu_{A_4} \, dx.
\]
Denoting by $n_0 \geq n$ the largest distance between two $A_j$'s we have two pairs of intervals with separation $O(n_0)$
therefore, applying twice the bilinear $L^2$ bound we obtain
\[
\|\partial_x w_n \|_{H^{-\frac12}}^2 \lesssim \epsilon^4  \sum_{n_0 \geq  n} \frac{n}{n_0}
\sum_{|A_1|,|A_2| \approx n}^{d(A_1,A_2) \approx n}
\sum_{|A_3|,|A_4| \approx n}^{d(A_3,A_4) \approx n}
 c_{A_1} c_{A_2} c_{A_3} c_{A_4} .
\]
We separate the cases when $n_0 \approx n$ and $n_0 \gg n$.
In the first, diagonal case we simply bound the corresponding part of the sum by 
\[
\epsilon^4 \sum_{|A_1|,|A_2| \approx n}^{d(A_1,A_2) \approx n}
c_{A_1}^2 c_{A_2}^2.
\]
In the off-diagonal case we apply  Cauchy-Schwarz inequality separately for the pairs $A_1,A_2$ and $A_3,A_4$ to obtain a bound
\[
\epsilon^4 \frac{n}{n_0} \sum_{|B_1|,|B_2| \approx n_0}^{d(B_1,B_2) \approx n_0}
c_{B_1}^2 c_{B_2}^2.
\]
Incorporating the first case into the second  we arrive at 
\[
\|\partial_x w_n \|_{L^2_t H^{-\frac12}_x}^2 \lesssim \epsilon^4  \sum_{n_0 \geq  n} \frac{n}{n_0}\sum_{|B_1|,|B_2| \approx n_0}^{d(B_1,B_2) \approx n_0}
c_{B_1}^2 c_{B_2}^2.
\]

Finally, using orthogonality in frequency we have 
\[
\begin{aligned}
\|\sum_n \partial_x w_n \|_{L^2_t H^{-\frac12}_x}^2 \lesssim & \   \epsilon^4 \sum_n \sum_{n_0 \geq  n} \frac{n}{n_0}\sum_{|B_1|,|B_2| \approx n_0}^{d(B_1,B_2) \approx n_0}
c_{B_1}^2 c_{B_2}^2
\\
 \lesssim & \  \epsilon^4 \sum_{n_0} \sum_{|B_1|,|B_2| \approx n_0}^{d(B_1,B_2) \approx n_0} c_{B_1}^2 c_{B_2}^2
\\
 \lesssim & \  \epsilon^4 .
\end{aligned}
\]
The proof of the theorem is concluded.

\end{proof}


\bibliographystyle{plain}

\end{document}